\newtheorem{theorem}{Theorem}[section] 
\newtheorem{claim}[theorem]{Claim}
\newtheorem{lemma}[theorem]{Lemma} 
\newtheorem{conclusion}[theorem]{Conclusion}
\newtheorem{corollary}[theorem]{Corollary} 
\theoremstyle{definition}
\newtheorem{definition}[theorem]{Definition}
\newtheorem{example}[theorem]{Example}
\newtheorem{answer}[theorem]{Answer}
\theoremstyle{remark}
\newtheorem{remark}[theorem]{Remark}
\newtheorem{question}[theorem]{Question}
\newtheorem{notation}[theorem]{Notation}
\newcommand{\Th}{{\rm Th}}
\newcommand{\tp}{{\rm tp}}
\newcommand{\dpfor}{{\rm dpfor}}
\newcommand{\Av}{{\rm Av}}
\newcommand{\wilog}{{\rm without loss of generality}}
\newcommand{\Wilog}{{\rm Without loss of generality}}
\newcommand{\then}{{\underline{then}}}
\newcommand{\Then}{{\underline{Then}}}
\newcommand{\mn}{{\medskip\noindent}}
\newcommand{\sn}{{\smallskip\noindent}}
\newcommand{\bbR}{{\mathbb R}}
\newcommand{\cC}{{\mathscr C}}
\newcommand{\cF}{{\mathscr F}}
\newcommand{\cK}{{\mathscr K}}
\newcommand{\bbL}{{\mathbb L}}
\newcommand{\bbN}{{\mathbb N}}
\newcommand{\bbP}{{\mathbb P}}
\newcommand{\bbQ}{{\mathbb Q}}
\newcommand{\bbZ}{{\mathbb Z}}
\newcommand{\cS}{{\mathscr S}}
\newcommand{\cT}{{\mathscr T}}
\newcommand{\cf}{{\rm cf}}
\def\mathunderaccent#1#2 {\let\theaccent#1\skewfactor#2
\mathpalette\putaccentunder}
\def\putaccentunder#1#2{\oalign{$#1#2$\crcr\hidewidth
\vbox to.2ex{\hbox{$#1\skew\skewfactor\theaccent{}$}\vss}\hidewidth}}
\begin{document}

\title {Perpendicular Indiscernible Sequences in Real Closed Fields }
\author {Eyal Firstenberg \and Saharon Shelah}
\address{Einstein Institute of Mathematics\\
Edmond J. Safra Campus, Givat Ram\\
The Hebrew University of Jerusalem\\
Jerusalem, 91904, Israel\\
 and \\
 Department of Mathematics\\
 Hill Center - Busch Campus \\ 
 Rutgers, The State University of New Jersey \\
 110 Frelinghuysen Road \\
 Piscataway, NJ 08854-8019 USA}
\email{shelah@math.huji.ac.il}
\urladdr{http://shelah.logic.at}
\thanks{This is essentially the MSc. thesis of the first author under
  the guidance of the second author.  Partially supported by Israel
Science Foundation. Publ.E50 in the second author's website.  We thank
Alice Leonhardt for the beautiful retyping of the work.}

\date{March 30, 2012}

\maketitle
\numberwithin{equation}{section}
\setcounter{section}{-1}
\newpage

\centerline {Annotated Contents}
\bigskip

\noindent
\S0 \quad Introduction, pg.
\bigskip

\noindent
\S1 \quad Preliminaries, pg.

\S(1.1) \quad A word about notation

\S(1.2) \quad Dependent Theories

\S(1.3) \quad Real Closed Fields
\bigskip

\noindent
\S2 \quad Perpendicularity in Dependent Theories, pg.
\bigskip

\noindent
\S3 \quad Cuts in Real Closed Fields, pg.
\bigskip

\noindent
\S4 \quad Perpendicularity in Real Closed Fields, pg.
\bigskip

\noindent
\S5 \quad Strong Perpendicularity in Real Closed Fields, pg.
\newpage

\section {Introduction}

In this paper we investigate the behaviour of concepts from dependent
theories when applied to real closed fields.  Our main focus will be
in the concept of perpendicular indiscernible sequences, a concept
first introduced in \cite[\S4]{Sh:715}.

A (first-order) theory is dependent when it's monster model doesn't
contain a sequence of finite sequences $\{\bar a_i\}_{i \in \omega}$
and a formula $\varphi(x,y)$ with $\ell(y) = \ell(\bar a_i)$ for $i < \omega$
 such that for every finite set $S
\subseteq \omega$ and a ``truth requirement function" $\eta:S
\rightarrow \{T,F\}$ there exists some $x_\eta$ with $\models
\bigwedge\limits_{i \in S} \varphi[x_\eta,\bar a_i]^{\eta(i)}$.  This
concept was introduced by Shelah, see \cite{Sh:715}.  This concept is
related to stability of models, and one can show that every stable
theory is also dependent.

Perpendicularity in dependent models (models of a dependent theory) is
a binary relation between infinite indiscernible sequences.  Two such
sequences $\bar{\bold a}^1 = \langle a^1_t:t \in I^1\rangle,\bar{\bold
  a}^2 = \langle a^2_t:t \in I^2\rangle$ are said to be perpendicular
if for every formula $\varphi(x,y)$ with $\ell(x) =
\ell(\alpha^1_t),\ell(y) = \ell(a^2_t)$ there exists a truth value $\bold
t$ such that for every large enough $t \in I^1$ for every large enough
$s \in I^2$ we have $\models \varphi[a_t,a_s]^{\bold t}$ and for every
large enough $s \in I^2$ for every large enough $t \in I^1$ we have
$\varphi[a_t,a_s]^{\bold t}$, see Definition \ref{15}.  This concept is
suggested in \cite[4]{Sh:715} as a substitute for orthogonal sequences
in stable theories.

A real closed field is an ordered field which exhibits the
Intermediate Value Theorem for polynomials, i.e. for every polynomial
$p(x)$ and elements $a,b$ such that $p(a) > 0,p(b) < 0$ there exists
some $c$ in the interval $(a,b)$ such that $p(c)=0$.  Tarski proved
that this theory has quantifier elimination, and so it is easily
concluded that the theory is dependent.  See ``Preliminaries".

In this paper we investigate perpendicularity of indiscernible
sequences in real closed fields.  In \S2 we properly define
perpendicularity and supply equivalent definitions.  We also
characterize perpendicular sequences based on results from
\cite{Sh:715}.  In \S3 we will review the subject of Dedekind
cuts in real closed Dedkind fields, emphasizing the notion of dependent cuts
from the model theoretic perspective.  In \S4 we connect the concepts of
perpendicularity of sequences and dependency of cuts in real closed
fields, and we show that under certain condition cuts are dependent
iff the sequences inducing them are not dependent (Claim \ref{46} and
Theorem \ref{42}).  In \S5 we define strong perpendicularity between
indiscernible sequences in dependent models, and we prove that in real
closed fields no two indiscernible sequences are strongly
perpendicular (Theorem \ref{63}).

We presume the reader is well familiar with model theory and abstract
model theory and fairly familiar with real closed fields.  A good
introduction to model theory can be found in \cite{Ho97}.  A good
introduction to real closed fields can be found in \cite{MMP96}.  We
also recommend the reader to familiarize himself or herself 
with the definitions and results which appeared in \cite{Sh:715} 
and \cite{Sh:783}, although we
will state and prove every definition or claim we use from there.
\newpage

\section {Preliminaries}

In this section we introduce the basic concepts on which the rest of the
work depends and several basic known results.  The section is divided into
two parts.  In the first part we define the concept of ``dependent
theories" and dependent formulas.  In the second part we cite Tarski's
theorem of quantifier elimination in the theory of real closed field,
and several immediate conclusions which will be helpful in the next
items.  The reader is invited to skip this section and use it as a
reference.
\bigskip

\subsection {A Word About Notation} \
\bigskip

We shall work mainly with the theory of real closed fields.  However,
when the theory under discussion is known we will denote by $\cC$ the
``monster model".  As usual, $\varphi^0$ means $\neg \varphi$ and
$\varphi^1$ mens $\varphi$.
\bigskip

\subsection {Dependent Theories}\
\bigskip

\begin{definition}  
\label{1}
Let $T$ be a complete theory.  $T$ is said to be independent if for
some formula $\varphi(\bar x,\bar y)$, possibly with parameters,
$\varphi(\bar x,\bar y)$ is independent, which means that for every $n
< \omega$:

\[
T \vdash \exists \bar y_0 \exists \bar y_1 \ldots \exists \bar
y_{n-1} \bigwedge\limits_{\eta \in \{0,1\}^n} \exists \bar x
(\bigwedge\limits_{k<n} \varphi(\bar x,\bar y_k)^{\eta(k)}).
\]

\mn
$T$ is said to be dependent if it's not independent.  A model is
called dependent if its theory is.
\end{definition}

\begin{example}  
\label{2}
1) Let $\langle V,E\rangle$ be a $*$-{\bf random graph}.  Then
   $\Th(\langle V,E\rangle)$ is independent.

\noindent
2) The theory of dense linear order is dependent.
\end{example}

\begin{proof}  
1) A $*$-random graph $\langle V,E\rangle$ is an infinite graph
$|V| \ge \aleph_0$ where every finite graph can be embedded into it.
   Here $\varphi(x,y) = xEy$ is independent.

\noindent
2) The theory of dense linear order can be interpreted in the theory
   of real closed fields.  Now use the next Lemma and Claim \ref{7}.
\end{proof}

\begin{lemma}  
\label{3}
Let $M$ be some dependent model and let $N$ be a model interpreted in
$M$.  Then $N$ is dependent.
\end{lemma}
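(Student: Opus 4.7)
The plan is to prove the contrapositive: assuming $N$ is independent, I will manufacture an independent formula in $M$.

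Recall that an interpretation of $N$ in $M$ consists of a formula $\chi(\bar z)$ (possibly with parameters from $M$) defining a set $D \subseteq M^k$, possibly a definable equivalence relation $E$ on $D$ with $N$'s universe identified with $D/E$, and for each symbol of the language $L_N$ a corresponding $L_M$-formula yielding the same relation/function on $D/E$. In particular, every $L_N$-formula $\psi(\bar x, \bar y)$ has a translation $\widehat\psi(\bar x', \bar y')$ in $L_M$, where each variable of $\bar x$ is replaced by a block of $k$ variables, and $\widehat\psi$ expresses ``each block belongs to $D$'' and ``$\psi$ holds of the corresponding $E$-classes.''

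Suppose $N$ is independent, witnessed by some $L_N$-formula $\psi(\bar x, \bar y)$. Then for each $n < \omega$ there exist $\bar b_0, \ldots, \bar b_{n-1}$ in $N$ so that for every $\eta \in \{0,1\}^n$ some $\bar a_\eta$ in $N$ satisfies $\bigwedge_{k<n} \psi(\bar a_\eta, \bar b_k)^{\eta(k)}$. Choose $L_M$-representatives: lift each component of each $\bar b_j$ to a tuple in $D \subseteq M^k$, obtaining $\bar b'_j$, and similarly obtain $\bar a'_\eta$. By the defining property of the interpretation, $\models_M \widehat\psi[\bar a'_\eta, \bar b'_j]$ iff $\models_N \psi[\bar a_\eta, \bar b_j]$, so the same boolean pattern is realized by $\widehat\psi$ over the $\bar b'_j$'s in $M$. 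Thus $\widehat\psi$ (allowing the parameters of the interpretation as parameters) witnesses independence of $M$, contradicting our hypothesis.

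The main obstacle, which is mostly bookkeeping rather than a genuine difficulty, is handling the possibility that the interpretation uses parameters and a nontrivial equivalence relation: one must include the parameters of the interpretation as parameters of $\widehat\psi$ (which is allowed by Definition \ref{1}), and one must verify that lifting representatives is harmless because $\widehat\psi$ is constructed so as to respect $E$-classes. Once the translation $\widehat\psi$ is set up correctly, transferring the independence pattern from $N$ to $M$ is immediate.
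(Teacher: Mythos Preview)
Your proof is correct and follows essentially the same approach as the paper: both argue by contrapositive, translating an independent $L_N$-formula $\psi$ to an $L_M$-formula $\widehat\psi$ (the paper writes $\varphi_\rho$) via the interpretation, and then transferring the independence pattern from witnesses in $N$ to their representatives in $M$. Your version is somewhat more explicit about the bookkeeping (the definable domain, the equivalence relation, parameters of the interpretation), but the argument is the same.
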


\begin{proof}  
Denote by $\rho:N \rightarrow M^k,\rho:\tau(N) \rightarrow \bbL(M)$
the interpretation function.  So $\rho$ induces a function
$\rho':\bbL(N) \rightarrow \bbL(M)$ (and we use the notation
$\varphi_\rho = \rho'(\varphi))$ such that $N \models \psi[\bar a]
\Leftrightarrow M \models \psi_\rho[\rho(\bar a)]$.  Now assume by
contradiction that the claim is {\it false}.  So for some formula
$\varphi(\bar x,\bar y) \in \bbL(N)$ we have that $\rho$ is
independent.  We claim that $\varphi_\rho$ is independent in $M$.  Let
$n < \omega$.  By assumption there exists $\bar y_0,\dotsc,\bar
y_{n-1} \in {}^{\ell g(\bar y)}N$ such that

\[
N \models \bigwedge\limits_{\eta \in \{0,1\}^n} \exists \bar
x(\bigwedge\limits_{k<n} \varphi(\bar x,\bar y_k)^{\eta(k)}).
\]

\mn
So we have

\[
M \models \bigwedge\limits_{\eta \in \{0,1\}^n} \exists \bar x'
(\bigwedge\limits_{k<n} \varphi_\rho(\bar x',\rho(\bar y_k))^{\eta(k)}).
\]

\mn
Where $\ell g(\bar x') \cdot k$.  This shows that $\varphi_\rho$ is
independent in $M$.  Contradiction.
\end{proof}

\begin{definition}  
\label{4}
Let $I$ be some linearly ordered set and let $\bar{\bold a} = \langle
\bar a_t:t \in I\rangle$ be some indiscernible sequence with $\ell
g(\bar a_t) = m$ for every $t \in I$.  A formula $\varphi(\bar x,\bar
y)$ possibly with parameters, $\ell g(\bar y) = m$ is said to be
dependent relative to $\bar{\bold a}$, if for every $\bar b \in
{}^{\ell g(\bar x)} \cC$ the set $\{t \in I :\models 
\varphi[\bar b,\bar a_t]\}$ is a finite union of convex subsets of $I$.  We
define $\dpfor(\bar{\bold a}) \subseteq \bbL(T)$ 
to be the set of such formulas.
\end{definition}

\begin{claim}  
\label{5}
Let $T$ be a dependent theory, and $\bar{\bold a} = \langle \bar a_t:t
\in I\rangle$ some indiscernible sequence.  Then $\dpfor(\bar{\bold
  a}) = \{\varphi(\bar x,\bar y) \subseteq \bbL(T):\ell g(\bar y) =
\ell g(\bar a_t)\}$.  Furthermore, for every $\varphi(\bar x,\bar y)$
there exists $k_\varphi \in \omega$ such that for every indiscernible
sequence $\bar{\bold a} = \langle \bar a_t:t \in I\rangle$ and every
$\bar b \in {}^{\ell g(\bar x)}\cC$ we have that $\{t \in I :\models
\varphi[\bar b,\bar a_t]\}$ is a union of no more than $k_\varphi$
convex subsets of $I$.
\end{claim}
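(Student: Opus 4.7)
The plan is to deduce the first assertion from the ``furthermore'' clause: whenever a uniform bound $k_\varphi$ exists, the set $\{t \in I : \models \varphi[\bar b, \bar a_t]\}$ is \emph{a fortiori} a finite union of convex subsets, so $\varphi \in \dpfor(\bar{\bold a})$. It therefore suffices to establish the uniform bound $k_\varphi$.

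Suppose toward a contradiction that no such $k_\varphi$ exists. Then for every $M < \omega$ I can choose an indiscernible sequence $\bar{\bold a}^M = \langle \bar a^M_t : t \in I_M \rangle$ and a tuple $\bar b^M \in {}^{\ell g(\bar x)}\cC$ such that $\{t \in I_M : \models \varphi[\bar b^M, \bar a^M_t]\}$ has more than $M$ convex components. Extracting one point from each component furnishes indices $t^M_0 < t^M_1 < \cdots < t^M_M$ in $I_M$ on which the truth value of $\varphi[\bar b^M, \bar a^M_{t^M_k}]$ alternates with $k$. Pigeonholing the starting parity as $M$ varies, and replacing $\varphi$ by $\neg\varphi$ if necessary, I may assume that for cofinally many $M < \omega$ the pattern is $\models \varphi[\bar b^M, \bar a^M_{t^M_k}]$ iff $k$ is even.

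From this I derive independence of $\varphi$, contradicting Definition \ref{1}. Fix $n < \omega$ and pick some $M \ge 2n$ in the cofinal set above. Let $\bar c_k := \bar a^M_{t^M_k}$ for $k < 2n$; then $\langle \bar c_0, \dots, \bar c_{2n-1}\rangle$ is indiscernible as a subsequence of an indiscernible sequence. Set $\bar y_k := \bar c_k$ for $k < n$. For an arbitrary $\eta \in \{0,1\}^n$, define $j_k := 2k$ if $\eta(k)=1$ and $j_k := 2k+1$ if $\eta(k)=0$. These indices are strictly increasing, lie in $\{0,\dots,2n-1\}$, and their parities realize $\eta$, so the alternating pattern yields $\models \bigwedge_{k<n} \varphi[\bar b^M, \bar c_{j_k}]^{\eta(k)}$. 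By indiscernibility $(\bar c_{j_0},\dots,\bar c_{j_{n-1}})$ and $(\bar c_0,\dots,\bar c_{n-1}) = (\bar y_0,\dots,\bar y_{n-1})$ realize the same type over $\emptyset$, so some $\bar b' \in \cC$ satisfies $\models \bigwedge_{k<n}\varphi[\bar b', \bar y_k]^{\eta(k)}$. As $n$ and $\eta$ were arbitrary, this exhibits $\varphi$ as independent and contradicts the dependence of $T$.

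The main obstacle is the parity-matching bookkeeping, namely producing strictly increasing indices $j_0 < \dots < j_{n-1}$ of prescribed parities within the available sample range, which is resolved by the uniform choice $j_k \in \{2k, 2k+1\}$; a secondary subtlety is the initial pigeonhole that stabilizes the starting parity of the alternation across infinitely many $M$.
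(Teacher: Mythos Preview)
Your argument is correct and uses the same core trick as the paper: from an alternating sequence of length $2n$, select indices $j_k \in \{2k, 2k+1\}$ according to a prescribed $\eta \in \{0,1\}^n$, then transfer via indiscernibility between the tuple $(\bar c_{j_0},\dots,\bar c_{j_{n-1}})$ and the initial tuple $(\bar c_0,\dots,\bar c_{n-1})$.

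The main difference is structural. The paper fixes at the outset the specific $n$ at which the independence scheme first fails, then shows directly that $k_\varphi = 2n-1$ works: if some $\bar b$ produced $2n$ alternations along $t_0 < \cdots < t_{2n-1}$, the failing $\eta$ would be contradicted by $\bar b$ at the shifted indices $t_{2k+\eta(k)}$. This yields an explicit bound in one step. Your version runs the contrapositive: you let $M$ grow, extract alternations, and for each $n$ exhibit witnesses to independence, concluding $T$ is independent. This requires the extra bookkeeping of varying sequences $\bar{\bold a}^M$ and the parity pigeonhole (which is in fact dispensable, since independence of $\varphi$ and of $\neg\varphi$ at level $n$ are equivalent by flipping $\eta$). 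Both routes are sound; the paper's is shorter and delivers the concrete value of $k_\varphi$, while yours makes the derivation of independence slightly more explicit.
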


\begin{proof}  
$T$ is dependent and so by definition there exists $n<\omega$ such
  that:

\[
T \models \neg \exists \bar y_0 \exists \bar y_1 \ldots \exists \bar
y_{n-1} \bigwedge\limits_{\eta \in \{0,1\}^n} \exists \bar
x(\bigwedge\limits_{k<n} \varphi(\bar x,\bar y_k)^{\eta(k)}).
\]

\mn
Now assume toward contradiction $\bar{\bold a} = \langle \bar a_t:t
\in I\rangle$ is an indiscernible sequence and $\bar b \in {}^{\ell
  g(\bar x)}\cC$ such that $\langle \varphi{\bar b,\bar a_t}:t \in
I\rangle$ change signs $2n-1$ times.  \Wilog \, there exist $t_0 <_i
t_1 <_I \ldots <_I t_{2n-1}$ such that $\models \varphi[\bar b,\bar
  a_{t_k}]$ if $k$ odd.  Now according to the inset equation above 
there exists some $\eta
\in \{0,1\}^n$ such that $\models \forall \bar x(\neg
\bigwedge\limits_{k<n} \varphi(\bar x,\bar a_{t_k})^{\eta(k)})$, using
indiscernibility $\models \forall \bar x(\neg 
\bigwedge\limits_{k<n} \varphi(\bar x,
\bar a_{t_{2k+\eta(k)}})^{\eta(k)})$, in contradiction to 
$\models \bigwedge\limits_{k<n} \varphi[\bar b,
\bar a_{t_{2k +\eta(k)}}]^{\eta(k)}$, so $k_\varphi = 2n-1$ satisfies
the requirements.
\end{proof}
\bigskip

\subsection {Real Closed Field} \
\bigskip

We assume some background in the definitions of real closed fields.
Throughout this paper, $\cF$ will denote a model of a real closed
field.

We shall use the next theorem by Tarski quite often.  We bring it here
without proof.

\begin{theorem}  
\label{6}
(Quantifier elimination for the theory of real closed fields)

The theory of real closed fields has quantifier elimination in the
language $\langle +,\cdot,0,1,< \rangle$.
\end{theorem}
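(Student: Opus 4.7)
The plan is to invoke the standard model-theoretic criterion for quantifier elimination: a complete theory $T$ admits QE iff, whenever $M_1,M_2 \models T$ share a common substructure $A$ (in the language $\langle +,\cdot,0,1,<\rangle$) and $a \in M_1$, the quantifier-free type of $a$ over $A$ is realized in some elementary extension of $M_2$. So the proof reduces to analyzing one-variable quantifier-free types over substructures.

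First I would observe that a substructure $A$ of a real closed field is an ordered integral domain, so by passing to its field of fractions inside the ambient model we may assume $A$ is an ordered subfield of each $M_i$. Then, inside each $M_i$, the relative algebraic closure of $A$ is a real closure of $A$. The key algebraic input is the Artin--Schreier uniqueness theorem: any two real closures of an ordered field are canonically isomorphic over that field. Hence we may enlarge $A$ and assume $A$ is itself real closed and embedded in both $M_1$ and $M_2$.

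Second, I would describe $\tp^{\qdef}(a/A)$ for $a \in M_1$ explicitly. If $a$ is algebraic over $A$, then $a \in A$ since $A$ is real closed, and there is nothing to do. If $a$ is transcendental over $A$, then for any polynomial $p(x) \in A[x]$ all roots of $p$ in $M_1$ lie in $A$ (again by real closedness), so the sign of $p(a)$ is determined solely by the position of $a$ relative to those finitely many roots in $A$. Therefore $\tp^{\qdef}(a/A)$ is completely determined by the Dedekind cut $(L,R)$ that $a$ induces on $A$.

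Finally, I would realize that same cut in a sufficiently saturated elementary extension $M_2^*$ of $M_2$, which is possible by compactness since any finitely many inequalities from $L$ and $R$ admit a common solution in $M_2$ itself. This realizes $\tp^{\qdef}(a/A)$ in $M_2^*$ and completes the criterion. The main obstacle is the Artin--Schreier uniqueness of real closures; its standard proof relies on Sturm's theorem, which expresses the number of real zeros of a polynomial in an interval as a quantifier-free function of its coefficients, and this is where the intermediate value property is genuinely used. Everything else in the argument is formal model theory.
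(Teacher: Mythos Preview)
Your argument is correct and follows the standard model-theoretic route to quantifier elimination for real closed fields: reduce to realizing one-variable quantifier-free types over a common substructure, enlarge that substructure to a real closed subfield using Artin--Schreier uniqueness of real closures, and then observe that over a real closed $A$ the quantifier-free type of a transcendental element is determined by the cut it induces, which can be realized in any sufficiently saturated model by compactness. Your identification of Sturm's theorem (and hence the intermediate value property) as the substantive algebraic input behind the uniqueness of real closures is also accurate.

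However, there is nothing to compare here: the paper does not prove this theorem at all. Its entire ``proof'' is the sentence ``See \cite[Ch.1]{MMP96}.'' The result is quoted as a classical black box, and the paper only uses its consequences (Corollaries \ref{8}--\ref{10} and the dependence of the theory). So your proposal is not an alternative to the paper's argument; it is simply a proof where the paper elected to give a reference. If you were asked to reproduce the paper's proof, the correct answer is that there is none --- only a citation.
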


\begin{proof}  
See \cite[Ch.1]{MMP96}.
\end{proof}

\begin{claim}  
\label{7}
The theory of $\langle \bbR,+,\cdot,0,1,<\rangle$, i.e. the theory of
real closed fields is dependent.
\end{claim}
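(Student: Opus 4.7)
The plan is to use Tarski's quantifier elimination (Theorem \ref{6}) to reduce to atomic polynomial formulas, then bound the number of sign patterns realized by such a formula via a linearization argument, and show this bound persists under Boolean combinations.

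By Theorem \ref{6}, every formula $\varphi(\bar x,\bar y)\in\bbL(T)$ is equivalent modulo $T$ to a Boolean combination of atomic formulas of the form $p(\bar x,\bar y)=0$ or $p(\bar x,\bar y)<0$, with $p\in\bbZ[\bar x,\bar y]$. It then suffices to establish (a) for each such atomic $\psi(\bar x,\bar y)$ there is a polynomial $P_\psi$ such that for all $\bar y_0,\dots,\bar y_{n-1}$ in $\cC$ the number of patterns $\eta\in\{0,1\}^n$ with $\cC\models\exists\bar x\bigwedge_{k<n}\psi(\bar x,\bar y_k)^{\eta(k)}$ is at most $P_\psi(n)$; and (b) this polynomial-pattern bound is preserved under Boolean combinations, since if components $\psi_1,\dots,\psi_m$ realize at most $P_j(n)$ patterns each, then their joint tuple realizes at most $\prod_j P_j(n)$ joint patterns, and any Boolean function of them realizes no more. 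Since $P(n)<2^n$ for $n$ sufficiently large, the independence condition of Definition \ref{1} must fail for $\varphi$.

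For (a), fix $p(\bar x,\bar y)$ of degree $d$ in $\bar x$ and set $D=\binom{d+\ell g(\bar x)}{\ell g(\bar x)}$. Expanding $p(\bar x,\bar y)=\sum_\alpha c_\alpha(\bar y)\bar x^\alpha$ over multi-indices $\alpha$ of total degree at most $d$, and defining $v(\bar x)=(\bar x^\alpha)_\alpha\in\cF^D$ and $\bar c(\bar y)=(c_\alpha(\bar y))_\alpha\in\cF^D$, we have $\mathrm{sign}\,p(\bar x,\bar y_k)=\mathrm{sign}\bigl(\bar c(\bar y_k)\cdot v(\bar x)\bigr)$. Hence each realized sign vector $(\mathrm{sign}\,p(\bar x,\bar y_k))_{k<n}$ coincides with the sign pattern of the point $v(\bar x)\in\cF^D$ relative to the arrangement of $n$ linear hyperplanes $\{z\in\cF^D : \bar c(\bar y_k)\cdot z=0\}$. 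The classical bound on cells of a hyperplane arrangement yields at most $\sum_{i=0}^D\binom{n}{i}=O(n^D)$ such patterns, proving (a).

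The main obstacle is the cell-arrangement bound underlying (a), which is external to the paper. A more self-contained alternative is induction on $\ell g(\bar x)$: the base case $\ell g(\bar x)=1$ is immediate, as a univariate degree-$d$ polynomial has at most $d$ zeros, so the $n$ polynomials $p(x,\bar y_k)$ partition $\cF$ into at most $nd+1$ constant-sign regions; the inductive step then requires semi-algebraic cell decomposition and is the genuinely nontrivial input. Either route delivers the dependency of $\Th(\bbR)$.
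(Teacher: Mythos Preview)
Your proof is correct and shares the paper's key idea---the Veronese-style linearization $p(\bar x,\bar y_k)=\bar c(\bar y_k)\cdot v(\bar x)$---but the two arguments diverge after that point. The paper bypasses any cell-counting entirely: writing $p$ as a sum of $k$ monomials in $\bar x$, it takes $k+1$ parameter tuples $\bar b_0,\dots,\bar b_k$, observes that the resulting coefficient vectors $\bar d^0,\dots,\bar d^k\in\bbR^k$ must be linearly dependent, say $\sum_j a_j\bar d^j=0$, and hence $\sum_j a_j\,p(\bar x,\bar b_j)\equiv 0$ identically. Choosing $\eta$ with $\eta(j)=0\iff a_j>0$, any witness $\bar x_*$ for $\eta$ would force $\sum_j a_j\,p(\bar x_*,\bar b_j)>0$, a contradiction. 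This is shorter and fully self-contained: no hyperplane-arrangement bound, no cell decomposition, and it even yields the explicit bound that $p(\bar x,\bar y)>0$ cannot shatter $k+1$ points.

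On the other hand, your treatment of step~(b) is more careful than the paper's: the paper simply writes ``we reduce to the following case'' and handles only the atomic $p>0$, leaving the closure of dependence under Boolean combinations implicit. Your argument via products of shatter-function bounds makes this step explicit. So the trade-off is: the paper's linear-dependence trick is more elementary at the atomic level, while your approach is tidier about the reduction from arbitrary formulas to atomics.
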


\begin{proof}
We work in $\langle \bbR,+,\cdot,0,1,<\rangle$.  By the previous
theorem, we reduce to the following case.

Assume for a contradiction that for some polynomial $p$ in 
$s+m$ variables we have
$p(\bar x,\bar y,\bar c) >0$ is independent with $\ell g(\bar x) =
s,\ell g(\bar y)=m$.  Rewrite $p(\bar x,\bar y) =
\sum\limits^{k-1}_{i=0} m_i(\bar x,\bar y,\bar c)$ where the $m_i$'s
are monomials.  By definition of independent formula there exist $\bar
b_0,\dotsc,\bar b_k$ such that for any function $\eta$ with domain
$\{0,\dotsc,k\}$ and range $\{0,1\}$,

\[
\models \exists \bar x(\bigwedge\limits_{i \le k} p(\bar x,\bar
b_i,\bar c) \cdot (-1)^{\eta(i)} > 0).
\]

\mn
Denote by $d^j_i$ the coefficient of $\bar x$ in the monomial
$m_i(\bar x,\bar b_j,\bar c)$ and $\bar d^j =
(d^j_0,d^j_1,\dotsc,d^j_{k-1})$.  Now by counting dimensions we know
that $\{\bar d^j:j \in \{0,\dotsc,k\}\}$ is linearly dependent over $\bbR$,
namely $\sum\limits^k_{j=0} a_j \bar d^j=0$, with $(a_0,\dotsc,a_k)
\ne \bar 0$.  So we have $\sum\limits^{k}_{j=0} a_j \cdot p(\bar
x,\bar b_j,\bar c) = 0$.  Choose $\eta$ as above so that
$\eta(j) = 0$ iff $a_j > 0$.  Now we assumed that for each such $\eta$
there is an $\bar x_*$ such that

\begin{equation*}
\begin{array}{clcr}
\bigwedge\limits_{j \le k} &p(\bar x_*,\bar b_j,\bar c) \cdot (-1)^{\eta(j)} >
  0 \text { thus} \\
& \bigwedge\limits_{j \le k} p(\bar x_*,\bar b_j,\bar c) \cdot a_j \ge 0
  \text{ with some polynomial strictly greater than } 0 \\
& \sum\limits_{j \le k} a_j \cdot p(\bar x_*,\bar b_j,\bar c) > 0.
\end{array}
\end{equation*}

\mn
This contradicts the definition of $(a_0,\dotsc,a_*)$ so we finish.
\end{proof}

\noindent
We shall use the following corollaries of quantifier elimination later on:
\begin{corollary}
\label{8}
Let $\cF$ be some real closed field and $\phi(x,\bar a)$ a formula
with parameters from $\cF$.  Then the set $\{x \in \cF :\models
\varphi[x,\bar a]\}$ is a finite union of intervals of $\cF$.
Furthermore, the number of intervals is bounded uniformly
independent of $\bar a$.  
\end{corollary}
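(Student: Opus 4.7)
The plan is to apply quantifier elimination (Theorem \ref{6}) to the formula $\varphi(x,\bar a)$. This replaces $\varphi(x,\bar y)$ by some quantifier-free $\psi(x,\bar y)$ in the language $\langle +,\cdot,0,1,<\rangle$ such that $T \vdash \varphi \leftrightarrow \psi$. The advantage is that $\psi(x,\bar a)$ is a Boolean combination of atomic formulas, and atomic formulas in this language are, after moving everything to one side, of the form $p(x,\bar a) = 0$ or $p(x,\bar a) > 0$ for some polynomial $p \in \bbZ[x,\bar y]$. Crucially, the list of polynomials $p_0,\ldots,p_{m-1}$ that appears in $\psi$, and in particular the bound $d$ on their degrees in $x$, depends only on $\varphi$ and not on $\bar a$.

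Next I would analyze the solution set of each atomic constituent with $\bar a$ substituted. Each polynomial $p_i(x,\bar a)$ is a polynomial in the single variable $x$ over $\cF$ of degree at most $d$ (or else identically zero, in which case its zero set is either $\emptyset$ or all of $\cF$). A nonzero polynomial of degree at most $d$ over a real closed field has at most $d$ roots, and between consecutive roots (and outside them) the polynomial has constant sign. Consequently $\{x \in \cF : p_i(x,\bar a) > 0\}$ is a union of at most $d+1$ open intervals (allowing unbounded ones), and $\{x \in \cF : p_i(x,\bar a) = 0\}$ is a finite set of at most $d$ points; both can be encoded as a finite union of intervals (with singletons being degenerate closed intervals), with a bound depending only on $d$ and hence only on $\varphi$.

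Finally, the solution set of $\psi(x,\bar a)$ is a Boolean combination, using $\wedge,\vee,\neg$, of the finitely many sets analyzed above. The class of finite unions of intervals in the linearly ordered set $\cF$ is closed under finite unions, finite intersections, and complements, and a straightforward count shows that if the inputs are unions of at most $N$ intervals each, then any fixed Boolean combination of them is a union of at most $f(N,m)$ intervals, where $m$ is the number of atomic constituents and $f$ depends only on the combinatorial shape of the Boolean combination, not on $\bar a$. Taking $k_\varphi = f(d+1,m)$ gives the required uniform bound.

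The main obstacle is mostly conceptual rather than technical: one must be careful that the quantifier-free reduction $\psi$ is chosen once for the formula $\varphi(x,\bar y)$ before parameters are substituted, so that the degrees and the shape of the Boolean combination are fixed. Once this is in place, the argument is a routine application of the fact that a polynomial of bounded degree over a real closed field has boundedly many roots and boundedly many sign changes.
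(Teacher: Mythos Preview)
Your argument is correct and is precisely the standard unpacking of why this follows from quantifier elimination; the paper itself states the result as a corollary of Theorem~\ref{6} without giving a proof, so your proposal simply spells out the intended reasoning. The key point you identify---fixing the quantifier-free $\psi$ for the formula $\varphi(x,\bar y)$ before substituting parameters, so that the degrees and Boolean shape are uniform in $\bar a$---is exactly what makes the bound uniform.
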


\begin{corollary}
\label{9}
Let $f(x):\cF \rightarrow \cF$ be some function definable over $\cF$.
Then $f$ is piecewise monotonic, with each piece either constant or
strictly monotonic.  In other words, there exist $a_0 < \ldots <
a_{n+1}$ with $a_0 = -\infty,a_{n+1} = \infty$ such that $f$ is
constant or strictly monotonic on $(a_i,a_{i+1})$ for $i \in \{0,\dotsc,n\}$. 
\end{corollary}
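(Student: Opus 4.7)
My plan is to identify three definable subsets of $\cF$ capturing the local behaviour of $f$, use Corollary~\ref{8} to reduce the statement to showing that the remainder is finite, and then invoke quantifier elimination (Theorem~\ref{6}) on any putative bad interval to expose $f$ there as an algebraic branch and derive a contradiction.

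Let $C$ be the set of $x \in \cF$ such that $f$ is constant on some open interval containing $x$, and let $I^+$, respectively $I^-$, be the analogous set on which $f$ is strictly increasing, respectively strictly decreasing, on such an interval. Each is definable from the parameters of $f$, so by Corollary~\ref{8} each is a finite union of intervals; hence so is $B := \cF \setminus (C \cup I^+ \cup I^-)$. Once I know that $B$ is finite, the set $\cF \setminus B$ decomposes into finitely many open intervals on each of which $f$ is locally constant or locally strictly monotone in a uniform sense, and a standard connectedness argument within a single interval then promotes this to the required global constancy or strict monotonicity; the finite set $B$ contributes only finitely many additional partition points.

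It thus remains to show that $B$ is finite. Suppose not: by Corollary~\ref{8}, $B$ contains some open interval $J$. By Theorem~\ref{6}, the graph of $f$ is quantifier-free definable, and after putting this formula in disjunctive normal form and shrinking $J$, I may assume the graph is cut out on $J$ by a single conjunction $\bigwedge_i p_i(x,y) \gtrless 0$ for polynomials $p_i \in \cF[x,y]$. After one more shrinking of $J$ to discard the finitely many $x$-roots of the leading $y$-coefficients, the $y$-discriminants, and the pairwise $y$-resultants of those $p_i$ that are not identically zero in $y$, the $y$-roots of each $p_i(x,\cdot)$ organize into a fixed number of pairwise disjoint continuous branches over $J$, and the sign conditions single out a unique algebraic branch $\eta$ with $f|_J = \eta$.

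The main obstacle is to contradict $J \subseteq B$. The branch $\eta$ satisfies $p(x,\eta(x)) \equiv 0$ for some $p \in \cF[x,y]$ with nonvanishing $y$-discriminant on $J$. I plan to apply Corollary~\ref{8} once more to the definable subsets of $J$ on which $\eta$ is locally increasing, decreasing, or constant, and argue by induction on $\deg_y p$ --- using a Rolle-type argument derived from the IVT, applied to $\eta$ and to auxiliary polynomials extracted from $p$ --- that these subsets cover $J$ up to finitely many points. This places almost every $x \in J$ into $C \cup I^+ \cup I^-$, which contradicts $J \subseteq B$ and completes the proof.
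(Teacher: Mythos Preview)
The paper states Corollary~\ref{9} without proof, simply recording it as a consequence of quantifier elimination; there is therefore no argument in the paper to compare against. Your overall plan --- isolate the loci $C, I^+, I^-$ of good local behaviour, show the complement $B$ is finite, then promote local to global monotonicity on each piece --- is exactly the standard route to the Monotonicity Theorem in o-minimal structures and is sound. One caution: the ``standard connectedness argument'' you invoke cannot be topological connectedness, since a real closed field other than $\mathbb{R}$ is totally disconnected in the order topology; you must again use Corollary~\ref{8}, examining the leftmost endpoint of the definable set $\{x \in (c,d] : f(x) \le f(c)\}$ and contradicting local increase there.

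The genuine gap is in your argument that $B$ is finite. The reduction to an algebraic branch $\eta$ with $p(x,\eta(x))=0$ is correct, but the promised induction on $\deg_y p$ is only a slogan: you never say which ``auxiliary polynomials extracted from $p$'' carry the inductive step, and there is no evident candidate. If $p$ is irreducible over $\cF(x)$ then $\eta$ satisfies no equation of smaller $y$-degree, so the hypothesis cannot be applied to $\eta$ itself; and monotonicity of the branches of $p_y$ (the obvious lower-degree polynomial) does not by itself force monotonicity of the branches of $p$, since a function squeezed between two monotone functions need not be monotone. The detour is in any case avoidable: one can argue entirely from Corollary~\ref{8} by analysing, for each $x$ in a putative interval $J \subseteq B$, which of $f(y) > f(x)$ or $f(y) < f(x)$ holds throughout a one-sided neighbourhood of $x$ (the equality option would place a subinterval inside $C$), and then showing by a short case analysis on the resulting definable left/right sign-patterns that one of them forces strict monotonicity on a subinterval of $J$, contradicting $J \subseteq B$.
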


\begin{corollary}
\label{10}
Let $\cF$ be some real closed field and $a,b \in \cF$ realize the same
type over some $A \in \cF$.  Then for all $d \in (a,b)_{\cF}$ we have
that $d$ realizes the same type over $A$ as $a,b$.
\end{corollary}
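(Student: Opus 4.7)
My plan is to use quantifier elimination (Theorem \ref{6}) to reduce $\tp(d/A) = \tp(a/A)$ to a statement about signs of polynomials over $A$, and then to exploit the $A$-definability of roots of such polynomials: any sign discrepancy at $d$ will produce, via the intermediate value property of $\cF$, a root of some $A$-polynomial strictly between $a$ and $b$, which in turn yields an $\bbL(A)$-formula separating $a$ from $b$ and so contradicting $\tp(a/A)=\tp(b/A)$.

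Assume $a<b$ (otherwise $(a,b)_{\cF}=\emptyset$ and the claim is vacuous), and fix $d\in(a,b)$. By quantifier elimination, $\tp(x/A)$ is determined by the map $p \mapsto \mathrm{sgn}(p(x))\in\{-,0,+\}$ ranging over polynomials $p(x)$ with coefficients in $\bbZ[A]$. So it suffices to show $\mathrm{sgn}(p(a))=\mathrm{sgn}(p(d))$ for every such $p$; the hypothesis already supplies $\mathrm{sgn}(p(a))=\mathrm{sgn}(p(b))$. Suppose toward contradiction that some $p$ has $\mathrm{sgn}(p(d))\neq\mathrm{sgn}(p(a))$. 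Then the triple $p(a),p(d),p(b)$ displays a sign change inside $[a,b]$, and the intermediate value property of the RCF $\cF$ delivers a root $r$ of $p$ in $[a,b]$.

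The key point is that each root of $p$ is definable over $A$: if $r$ is the $i$-th root of $p$ in increasing order, then
\[
\rho_i(x) := p(x)=0 \,\land\, \exists y_1<\cdots<y_{i-1}<x\; \bigwedge_{j<i} p(y_j)=0 \,\land\, \neg\exists y_1<\cdots<y_i<x\;\bigwedge_{j\le i}p(y_j)=0
\]
uses only parameters from $A$ (the coefficients of $p$) and defines $r$ uniquely in $\cF$. Hence $\psi(x):=\exists y\,(\rho_i(y)\land x<y)$ is an $\bbL(A)$-formula with $\psi(a)\Leftrightarrow a<r$ and $\psi(b)\Leftrightarrow b<r$. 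Provided $a<r<b$, $\psi$ separates $a$ from $b$, contradicting $\tp(a/A)=\tp(b/A)$.

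The main (rather minor) obstacle is thus handling the one subcase in which the IVT does not immediately place $r$ \emph{strictly} inside $(a,b)$, namely $p(a)=p(b)=0$ and $p(d)\neq 0$. In that subcase $a$ and $b$ are themselves two distinct roots of $p$, so $\rho_i(a)$ and $\rho_j(b)$ hold for distinct indices $i\neq j$, and the formula $\rho_i$ itself already separates $a$ from $b$; this subcase is therefore ruled out directly by the hypothesis $\tp(a/A)=\tp(b/A)$. In the remaining cases IVT genuinely produces a root $r\in(a,b)$, and the definability argument above closes the proof.
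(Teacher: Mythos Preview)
Your proof is correct. The core idea matches the paper's: the boundary data of an $A$-definable set in $\cF$ is itself $A$-definable, so any ``witness'' that $d$ differs from $a$ can be upgraded to an $\bbL(A)$-formula separating $a$ from $b$.

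The packaging, however, is slightly different. You first invoke quantifier elimination to reduce to atomic formulas $p(x)\gtreqqless 0$, then use IVT to locate a root of $p$ in $[a,b]$, and finally appeal to the $A$-definability of ``the $i$-th root of $p$'' to separate $a$ from $b$; this forces you into a small case split on whether $p(a)=p(b)=0$. The paper instead works at the level of an arbitrary $\bbL(A)$-formula $\varphi$: by Corollary~\ref{8} the set $\varphi(\cF)$ is a finite union of intervals, and the formula $\psi_n(x)$ saying ``$\varphi(x)$ and $x$ lies in the $n$-th such interval'' is already over $A$; since $a$ and $b$ satisfy the same $\psi_n$ they lie in a single interval on which $\varphi$ holds throughout, contradicting $\neg\varphi(d)$. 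Your route is a bit more hands-on (and self-contained from IVT), while the paper's is shorter and avoids the case analysis by leveraging Corollary~\ref{8} directly.
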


\begin{proof}
Assume otherwise and let $\varphi(x)$ witness: $\models (\neg
\varphi[d]) \wedge \varphi[a]$.  By the previous corollaries $\varphi$
divides the field into finitely many intervals, so define $\psi_n(x) =
``\varphi(x)$ and $x$ is in the $n$-th interval realizing
$\varphi(x)$".  It is easy to see that the $\psi_n$'s use only
parameters from $A$ so for some $n$ we have that $\models \psi_n[a]$.
So by assumption $\models \psi_n[b]$ this contradicts the existence of
$d \in (a,b)_{\cF}$.
\end{proof}
\newpage

\section {Perpendicularity in Dependent Theories} 

In this item we define and explore the notion of perpendicular
indiscernible sequence in dependent theories.  This notion is somewhat
parallel to the notion of orthogonal sequences in stable theories (see
\cite[Ch.V]{Sh:c}).  We also introduce a technique for constructing
indiscernible sequences based on ultra-filters or other indiscernible
sequences.  This technique is very important to the understanding of
the rest of the work.

\relax From here on we will denote by $T$ a dependent theory.
\bigskip

\begin{definition}
\label{11}
1) Let $\bar{\bold a} = \langle \bar a_t:t \in I\rangle$ be an
\underline{endless} indiscernible sequence (i.e. $I$ has no last
element).  For every $A \subset \cC$ and $\Delta
   \subseteq \bbL(T)$ we define the $\Delta$-average of $\bar{\bold
   a}$ over $A$ or $\Av_\Delta(A,\bar{\bold a})$ to be the type:

\begin{equation*}
\begin{array}{clcr}
\Av_\Delta(A,\bar{\bold a}) = \{\varphi(\bar x,\bar b)^{\bold
  t}:&\varphi(\bar x,\bar y) \in \Delta,\bar b \in {}^{\ell g(\bar
  y)}A,\ell g(\bar x) = \ell g(\bar a_t), \\
  &\text{ for every large enough } t \in I \text{ we have } \models
  \varphi[\bar a_t,\bar b]^{\bold t}\}.
\end{array}
\end{equation*}

\mn
2) Let $D$ be some ultrafilter on some ${}^m C \subset \cC$.  For every
$A \subset \cC$ we define the $\Delta$-average of $D$ over $A$ or
$\Av_\Delta(A,D)$ to be the type:

\begin{equation*}
\begin{array}{clcr}
\Av_\Delta(A,D) = \{\varphi(\bar x,\bar b)^{\bold t}:&\varphi(\bar
 x,\bar y) \in \Delta,\bar b \in {}^{\ell g(\bar y)}A,
\ell g(\bar x) = m, \\
  &\{\bar c \in {}^m \cC:\models \varphi[\bar c,\bar b]^{\bold t}\}
 \in D.\}
\end{array}
\end{equation*}
 
\mn
Omitting the $\Delta$ means for $\Delta = \bbL(T)$.

In Clause 2 of the above definition we think of ${}^m \cC$ as a ``test
set" and we say $\varphi(\bar x,\bar b) \in \Av_\Delta(A,D)$ if the
``majority" of ${}^m C$, according to $D$, exhibit $\models
\varphi[\bar x,\bar b]$.  Since $D$ is closed under finite
disjunctions, $\Av_\Delta(A,D)$ is finitely realized in ${}^m C$.
\end{definition}

\begin{remark}
Note that in Definition \ref{11} when writing $\Av_\Delta(A,\bar{\bold
  a})$ the parameter set $A$ comes before the object $\bar{\bold a}$
  in question, in keeping with usage in published articles of the
  second author.
\end{remark}

\begin{conclusion}
\label{12}
If $T$ is dependent and $I$ is endless, then $\Av(A,\bar{\bold a})$ is
a complete type over $A$.
\end{conclusion}

\begin{proof}
Follows from Claim \ref{5}.
\end{proof}

\begin{definition}
\label{13}
Let $T$ be dependent, $\bar{\bold a}$ a $\Delta$-indiscernible
sequence in $\cC$ and $A \subset \cC$ with $\bar{\bold a} \subseteq
A$.  We say that $\bar{\bold b} = \langle \bar b_t:t \in I\rangle$ is
$\Delta$-based on $\bar{\bold a}$ over $A$ if for every $t \in I$ we
have that $\bar b_t$ realizes $\Av_\Delta(A \cup \{\bar b_s:s <_I
t\},\bar{\bold a})$.  Omitting the $\Delta$ means for $\Delta = \bbL(T)$.
\end{definition}

\begin{example}
%\label{}
Let $T$ be the theory of dense linear order with a model $\bbQ$ and
let $\bar{\bold a} = \langle 10 - \frac 1n:n \in \bbN\rangle$.  We
will find a sequence based on $\bar{\bold a}$ over $\bbQ$ in some real
closed field extending $\bbQ$.  To that end, we will need to find an
element which realize $\{0 < x < p:0 < p \in \bbQ\}$, denoted
$\delta$.  Now the reader can check that the sequence $\langle 10 - n
\delta:n \in \bbN\rangle $ is as needed.
\end{example}

\begin{claim}
\label{14}
1) If $\bar{\bold b}^1,\bar{\bold b}^2$ are $\Delta$-based on
$\bar{\bold a} = \langle \bar a_s:s \in J\rangle$ over some $A$ with
the same order type $I$, then $\bar{\bold b}^1,\bar{\bold b}^2$ have
 the same $\Delta$-type over $A$.

\noindent
2) If $\bar{\bold b}$ is $\Delta$-based on $\bar{\bold a}$ over some
   $A$ then $\bar{\bold b}$ is $\Delta$-indiscernible over $A$.
\end{claim}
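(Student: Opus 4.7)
The plan is to prove (1) first, then derive (2) as a direct consequence. For (1), I would proceed by induction on the length $n$ of finite increasing tuples of indices, showing that for every $t_0 <_I \ldots <_I t_{n-1}$ in $I$, the tuples $(\bar b^1_{t_0}, \ldots, \bar b^1_{t_{n-1}})$ and $(\bar b^2_{t_0}, \ldots, \bar b^2_{t_{n-1}})$ realize the same $\Delta$-type over $A$. The base case $n=1$ reduces to the completeness of $\Av_\Delta(A, \bar{\bold a})$ as a $\Delta$-type over $A$, which is Conclusion \ref{12} (and implicitly uses dependence of $T$).

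For the inductive step, fix $\varphi(\bar x_0, \ldots, \bar x_n, \bar c) \in \Delta$ with $\bar c$ from $A$. By the based property of $\bar b^\ell_{t_n}$, the statement $\models \varphi(\bar b^\ell_{t_0}, \ldots, \bar b^\ell_{t_n}, \bar c)$ is equivalent to: for all large enough $s \in J$, $\models \varphi(\bar b^\ell_{t_0}, \ldots, \bar b^\ell_{t_{n-1}}, \bar a_s, \bar c)$. Since $\bar{\bold a} \subseteq A$, each $\bar a_s$ is a parameter from $A$, and this latter condition is determined by the $\Delta$-type of $(\bar b^\ell_{t_0}, \ldots, \bar b^\ell_{t_{n-1}})$ over $A$ (appealing to Claim \ref{5} for the uniform convexity bound that makes ``for large enough $s$'' a first-order-expressible tail condition). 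By the induction hypothesis that type is the same for $\ell = 1, 2$, so the two truth values agree.

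For (2), the key observation is that any subsequence of a $\Delta$-based sequence is itself $\Delta$-based on $\bar{\bold a}$ over $A$: if $J \subseteq I$ and $t \in J$, then $\bar b_t$ realizes $\Av_\Delta(A \cup \{\bar b_s : s <_I t\}, \bar{\bold a})$, which contains the restricted average $\Av_\Delta(A \cup \{\bar b_s : s \in J, s <_I t\}, \bar{\bold a})$. Given two increasing tuples $t_0 <_I \ldots <_I t_{n-1}$ and $s_0 <_I \ldots <_I s_{n-1}$ in $I$, the corresponding subsequences of $\bar{\bold b}$ are $\Delta$-based on $\bar{\bold a}$ over $A$ with the same finite order type, so part (1) applied to these two subsequences yields that $(\bar b_{t_0}, \ldots, \bar b_{t_{n-1}})$ and $(\bar b_{s_0}, \ldots, \bar b_{s_{n-1}})$ realize the same $\Delta$-type over $A$, i.e., $\bar{\bold b}$ is $\Delta$-indiscernible over $A$.

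The main technical point is invocation of dependence via Conclusion \ref{12} to know that averages are complete $\Delta$-types, so that two realizations of the same average over the same parameter set automatically share type. Beyond this, everything is a careful unwinding of the definition of ``based,'' and I do not anticipate a real obstacle.
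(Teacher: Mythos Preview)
Your proof is correct and follows essentially the same route as the paper: the paper argues part (1) by taking a minimal counterexample (equivalently, your induction on $n$), reducing the last coordinate via the definition of average to an $\bar a_s$ with $s$ large, and then invoking the minimality/induction hypothesis together with $\bar{\bold a}\subseteq A$; part (2) is derived identically by noting that finite subsequences remain $\Delta$-based. One small simplification: your appeal to Claim \ref{5} in the inductive step is unnecessary, since once the induction hypothesis gives equality of truth values of $\varphi(\bar b^\ell_{t_0},\ldots,\bar b^\ell_{t_{n-1}},\bar a_s,\bar c)$ for every individual $s$, the tail behaviors coincide automatically.
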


\begin{remark}
\label{r1}
In the Definition \ref{13} of $\Delta$-based we require $\bar{\bold a}
\subseteq A$.
\end{remark}

\begin{proof}
1) Assume toward contradiction that there exists $\varphi \in
   \Delta,t_1 <_I \ldots <_I t_{k_\varphi},\bar d_\varphi \subseteq A$
   with $\models \varphi[\bar b^1_{t_1},\dotsc,\bar
   b^1_{t_{k_\varphi}},\bar d_\varphi]$ but $\models \neg \varphi[\bar
   b^2_{t_1},\dotsc,\bar b^2_{t_{k_\varphi}},\bar d_\varphi]$.  Take
   such $\varphi$ with minimal $k_\varphi$.  By minimality of
   $k_\varphi$ we have that $\bar b^1_{t_1},\dotsc,\bar
   b^1_{t_{k-1}},\bar b^2_{t_1} \ldots \bar b^2_{t_{k-1}}$ realize the
   same $\Delta$-type over $A$.  By definition of average we know that
   for every large enough $s \in J$ we have that
$\models \varphi[\bar b^1_{t_1},\dotsc,\bar b^1_{t_{k-1}},\bar
   a_s,\bar d_\varphi]$, hence also for every large enough $s \in J$
   we have $\models \varphi[\bar b^2_{t_1},\dotsc,\bar b^2_{t_{k-1}},
\bar a_s,\bar d_\varphi]$.  Now by the definition of average we get
$\models \varphi[\bar b^2_{t_1},\dotsc,\bar b^2_{t_{k-1}},
\bar b^2_{t_k},\bar d_\varphi]$, contradicting the assumptions.

\noindent
2) Follows from Clause 1 noticing that every finite subsequence of
   $\bar{\bold b}$ is actually a sequence based on $\bar{\bold a}$
   over $A$.
\end{proof}

\begin{definition}
\label{15}
1) We say that two sequences $\bar{\bold b}^1,\bar{\bold b}^2$ are
   $\Delta$-mutually indiscernible over $A$ if for $\ell=1,2$ we have
   that $\bar{\bold b}^\ell$ is $\Delta$-indiscernible over
   $\bar{\bold b}^{3-\ell} \cup A$.

\noindent
2) Omitting $A$ means for $A = \phi$.  Omitting $\Delta$ means $\Delta
   = \bbL(T)$.

\noindent
3) We say that two endless $\Delta$-indiscernible sequences
   $\bar{\bold a}^1,\bar{\bold a}^2$ are $\Delta$-perpendicular iff
   for every $A \subset \cC$, for some $\langle \bar b^\ell_n:\ell \in
   \{1,2\},n < \omega\rangle$ where $\bar b^\ell_n$ realizes
   $\Av_\Delta(A \cup \bar{\bold a}^1 \cup \bar{\bold a}^2 \cup \{\bar
   b^k_m:k \in \{1,2\},m < n$ or $(m = n \wedge k < \ell)\},
\bar{\bold a}^\ell)$ for $\ell \in \{1,2\},n < \omega$ we have that $\langle
   \bar b^1_n:n < \omega\rangle,\langle \bar b^2_n:n < \omega\rangle$
   are $\Delta$-mutually indiscernible over $A \cup \bar{\bold a}^1
   \cup \bar{\bold a}^2$.  Omitting the $\Delta$ means for $\Delta = \bbL(T)$.
\end{definition}

\begin{example}
\label{16}
In the theory of dense linear order, two strictly increasing
indiscernible sequences of elements are perpendicular iff they induce
different cuts (see below).
\end{example}

\begin{claim}
\label{17}
In Definition 15, Clause 3 replacing ``for some $\langle \bar
b^\ell_n:\ell \in \{1,2\},n < \omega\rangle$" with ``for every
$\langle \bar b^\ell_n:\ell \in \{1,2\},n < \omega\rangle$" is equivalent.
\end{claim}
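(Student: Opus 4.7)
The direction ``for every'' implies ``for some'' is immediate, since some sequence of the prescribed kind exists by step-by-step realization of the specified $\Delta$-average types in the saturated monster $\cC$. The content lies in the other direction, and it reduces to the uniqueness statement

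($\ast$) any two sequences $\langle \bar b^\ell_n : \ell \in \{1,2\}, n < \omega\rangle$ satisfying the recursion of Definition~\ref{15}(3) realize the same $\Delta$-type over $A' := A \cup \bar{\bold a}^1 \cup \bar{\bold a}^2$.

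Once ($\ast$) is proved, the claim follows immediately, because $\Delta$-mutual indiscernibility of $\langle \bar b^1_n\rangle$ and $\langle \bar b^2_n\rangle$ over $A'$ is determined by the joint $\Delta$-type of the combined sequence over $A'$, and therefore holds for one such construction iff it holds for all.

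To prove ($\ast$) I would adapt the argument of Claim~\ref{14}(1) to the interlaced two-sided setting. Suppose toward a contradiction that some $\varphi \in \Delta$, parameters $\bar d \subseteq A'$, and indices $(\ell_1,n_1) < \cdots < (\ell_k,n_k)$ in lexicographic order witness a disagreement between two such constructions $\langle \bar b^\ell_n\rangle$ and $\langle \bar c^\ell_n\rangle$, with $k$ chosen minimal. By minimality of $k$, the $(k-1)$-prefixes realize the same $\Delta$-type over $A'$. Now $\bar b^{\ell_k}_{n_k}$ realizes the average $\Av_\Delta(A' \cup \{\bar b^{\ell'}_{n'} : (n',\ell') < (n_k,\ell_k)\}, \bar{\bold a}^{\ell_k})$, which is a complete $\Delta$-type by the $\Delta$-analogue of Conclusion~\ref{12} (whose proof is identical, via Claim~\ref{5}). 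Hence $\varphi(\bar b^{\ell_1}_{n_1},\dotsc,\bar b^{\ell_{k-1}}_{n_{k-1}},\bar x,\bar d)$ belongs to that average, i.e.\ $\models \varphi[\bar b^{\ell_1}_{n_1},\dotsc,\bar b^{\ell_{k-1}}_{n_{k-1}},\bar a^{\ell_k}_t,\bar d]$ for every sufficiently large $t$ in the index set of $\bar{\bold a}^{\ell_k}$. Because $\bar a^{\ell_k}_t$ and $\bar d$ both lie in $A'$, the $\Delta$-type agreement of the prefixes transfers this (one $t$ at a time) to the other construction, giving $\models \varphi[\bar c^{\ell_1}_{n_1},\dotsc,\bar c^{\ell_{k-1}}_{n_{k-1}},\bar a^{\ell_k}_t,\bar d]$ for all large $t$. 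Hence $\varphi(\bar c^{\ell_1}_{n_1},\dotsc,\bar c^{\ell_{k-1}}_{n_{k-1}},\bar x,\bar d)$ lies in the analogous average for the $\bar c$-construction and is therefore realized by $\bar c^{\ell_k}_{n_k}$, contradicting the choice of $\varphi$.

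The only mildly delicate point, and precisely the reason one must absorb $\bar{\bold a}^1 \cup \bar{\bold a}^2$ into the base parameter set $A'$ from the start, is that ``for every sufficiently large $t$'' is not itself a single first-order formula, so the transfer between $\bar b$ and $\bar c$ must be carried out pointwise in $t$. With this care, the proof is conceptually the same as that of Claim~\ref{14}(1) and introduces no genuinely new difficulty.
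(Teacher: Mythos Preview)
Your proof is correct and follows essentially the same route as the paper: reduce the nontrivial direction to showing that any two interlaced constructions realize the same $\Delta$-type over $A' = A \cup \bar{\bold a}^1 \cup \bar{\bold a}^2$, introduce the lexicographic order on the pairs $(n,\ell)$ exactly as in Definition~\ref{15}(3), and then run the minimal-counterexample argument of Claim~\ref{14}(1). The paper merely sketches the last step (``from here the proof is similar to the proof of Claim~\ref{14}''), whereas you spell out the pointwise-in-$t$ transfer explicitly; aside from a harmless notational slip (you write the indices once as $(\ell,n)$ and once as $(n,\ell)$), your argument matches the paper's.
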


\begin{proof}
We will deal with the non-trivial direction only.  It is enough to
prove that for any two pairs of sequences $\langle \bar
b^{\ell,i}_n:\ell \in \{1,2\},n < \omega\rangle$, constructed as in
Definition \ref{15}(3), i.e. with $\bar b^{\ell,i}_n$ realizing
$\Av_\Delta(A \cup \bar{\bold a}^1 \cup \bar{\bold a}^2 \cup \{\bar
b^{k,i}_m:k \in \{1,2\},m < n$ or $(m = n \wedge k <
\ell)\},\bar{\bold a}^\ell)$, we have that $\langle \bar b^{\ell,i}_n:
\ell \in \{1,2\},n < \omega\rangle$ realize the same type over $A \cup
\bar{\bold a}^1 \cup \bar{\bold a}^2$ for $i \in \{1,2\}$.  Define a
new order $\prec$ on these sequences: $\bar b^{\ell,i}_n \prec \bar
b^{k,i}_m$ iff $n < m$ or $(m = n \wedge \ell < k)$.  From here the
proof is similar to the proof of Claim \ref{14}.
\end{proof}

\begin{claim}
\label{18}
Being perpendicular and being $\Delta$-perpendicular are both
symmetric notions.
\end{claim}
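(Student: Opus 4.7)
Mutual indiscernibility as defined in Definition~\ref{15}(1) is manifestly symmetric in the two sequences; hence the only potentially asymmetric feature of $\Delta$-perpendicularity in Definition~\ref{15}(3) is the within-stage construction order, which places $\bar b^1_n$ before $\bar b^2_n$ at each stage $n$. The plan is to show that, given witnesses $\langle\bar b^\ell_n:\ell\in\{1,2\},n<\omega\rangle$ for $\Delta$-perpendicularity of $\bar{\bold a}^1,\bar{\bold a}^2$, the same tuples with their indices swapped already witness $\Delta$-perpendicularity of $\bar{\bold a}^2,\bar{\bold a}^1$.

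Concretely, I fix $A\subset\cC$, pick $\langle\bar b^\ell_n\rangle$ as Definition~\ref{15}(3) supplies, and set $\bar f^1_n:=\bar b^2_n$ and $\bar f^2_n:=\bar b^1_n$. Two of the three things to verify are essentially free. The tuple $\bar f^1_n=\bar b^2_n$ is built over the indiscernible sequence $\bar{\bold a}^2$, which is the first component of the ordered pair $(\bar{\bold a}^2,\bar{\bold a}^1)$, and the reversed-order average it is required to realize lives over the smaller parameter set $A\cup\bar{\bold a}^1\cup\bar{\bold a}^2\cup\{\bar f^k_m:m<n\}$; this follows by restriction from the larger average $\bar b^2_n$ was originally built to realize. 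Similarly, the required mutual indiscernibility of $\langle\bar f^1_n\rangle,\langle\bar f^2_n\rangle$ over $A\cup\bar{\bold a}^1\cup\bar{\bold a}^2$ is literally the mutual indiscernibility of $\langle\bar b^2_n\rangle,\langle\bar b^1_n\rangle$, which by the two-sided wording of Definition~\ref{15}(1) is the assertion already granted for $\langle\bar b^1_n\rangle,\langle\bar b^2_n\rangle$.

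The one delicate step, which I expect to be the main obstacle, is the average condition for $\bar f^2_n=\bar b^1_n$: the reversed-order construction asks $\bar b^1_n$ to realize $\Av_\Delta(A\cup\bar{\bold a}^1\cup\bar{\bold a}^2\cup\{\bar b^k_m:m<n\}\cup\{\bar b^2_n\},\bar{\bold a}^1)$, but in the original construction $\bar b^1_n$ was built before $\bar b^2_n$ even existed, so a priori its parameter set does not include $\bar b^2_n$. I will bridge this by shifting one step upward in the first sequence: the given mutual indiscernibility makes $\langle\bar b^1_n\rangle$ $\Delta$-indiscernible over $\{\bar b^2_m:m<\omega\}\cup A\cup\bar{\bold a}^1\cup\bar{\bold a}^2$, so $\bar b^1_n$ and $\bar b^1_{n+1}$ share their $\Delta$-type over $E:=\{\bar b^1_m:m<n\}\cup\{\bar b^2_m:m\le n\}\cup A\cup\bar{\bold a}^1\cup\bar{\bold a}^2$; and by its own (original, standard-order) construction, $\bar b^1_{n+1}$ realizes the $\bar{\bold a}^1$-average over $E$ because $E$ is contained in its actual parameter set. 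Transferring the $\Delta$-type back to $\bar b^1_n$ yields the required average and completes the verification. The same argument, specialized to $\Delta=\bbL(T)$, gives symmetry of plain perpendicularity.
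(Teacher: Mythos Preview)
Your proposal is correct and takes essentially the same approach as the paper. The paper packages the index shift slightly differently---it sets $\bar c^1_n:=\bar b^1_{n+1}$, $\bar c^2_n:=\bar b^2_n$ and uses $(\langle\bar c^2_n\rangle,\langle\bar c^1_n\rangle)$ directly as witnesses for the reversed pair, so that all the average conditions hold by restriction without a separate type-transfer step---but the key idea (using the shift $n\mapsto n+1$ on the $\bar{\bold a}^1$-side to get past the within-stage ordering) is exactly yours.
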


\begin{proof}
Assume $\bar{\bold a}^1,\bar{\bold a}^2$ are $\Delta$-perpendicular
and we will prove that $\bar{\bold a}^2,\bar{\bold a}^1$ are
$\Delta$-perpendicular.  So let $\langle \bar b^\ell_n:\ell \in
\{1,2\},n < \omega\rangle$ be such that $\bar b^\ell_n$ realizes

\[
\Av_\Delta(A \cup \bar{\bold a}^1 \cup \bar{\bold a}^2 \cup \{b^k_m:k
\in \{1,2\},m < n \text{ or } (m=n \wedge k < \ell)\},\bar{\bold
  a}^\ell)
\]

\mn
for $\ell \in \{1,2\},n < \omega$.  By Claim 17 $\langle b^\ell_n:n <
\omega\rangle,\langle \bar b^2_n:n < \omega\rangle$ are
$\Delta$-mutually indiscernible.  Define $\bar c^1_n = \bar b^1_{n+1}$
for $n < \omega$ and $\bar c^2_n = \bar b^2_n$ for $n < \omega$.  Now
easily $\langle \bar c^2_n:n < \omega\rangle,\langle \bar c^1_n:n <
\omega\rangle$ are witnesses that $\bar{\bold a}^2,\bar{\bold a}^1$
are $\Delta$-perpendicular.
\end{proof}

\begin{claim}
\label{19}
Assume $\bar{\bold a}^1 = \langle \bar a^1_t:t \in
I^1\rangle,\bar{\bold a}^2 = \langle \bar a^2_s:s \in I^2\rangle$ are
$\Delta$-indiscernible and $J^\ell \subseteq I^\ell$ is unbounded in
$I^\ell$ for $\ell \in \{1,2\}$.  So $\bar{\bold c}^1 = \langle \bar
a^1_t:t \in J^1\rangle,\bar{\bold c}^2 = \langle \bar a^2_s:s \in
J^2\rangle$ are $\Delta$-perpendicular iff $\bar{\bold a}^1,\bar{\bold
  a}^2$ are $\Delta$-perpendicular.
\end{claim}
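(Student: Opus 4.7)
The plan is to reduce both directions of the equivalence to a single key observation: for any endless $\Delta$-indiscernible sequence $\bar{\bold a} = \langle \bar a_t : t \in I\rangle$, any unbounded $J \subseteq I$, and any parameter set $A$, the averages $\Av_\Delta(A, \bar{\bold a})$ and $\Av_\Delta(A, \bar{\bold c})$ agree, where $\bar{\bold c} = \langle \bar a_t : t \in J \rangle$. Once this is established, both implications follow by simply rewriting the averages appearing in Definition \ref{15}(3) and observing that the candidate sequences $\langle \bar b^\ell_n : \ell \in \{1,2\}, n < \omega\rangle$ produced for one pair also work for the other.

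To prove the key observation, fix $\varphi(\bar x, \bar y) \in \Delta$ and $\bar b \in {}^{\ell g(\bar y)}A$. By Claim \ref{5} (using dependence of $T$), the set $\{t \in I : \models \varphi[\bar a_t, \bar b]\}$ is a union of at most $k_\varphi$ convex subsets of $I$, so there is a well-defined truth value $\bold t$ such that $\models \varphi[\bar a_t, \bar b]^{\bold t}$ on some final segment $I_0$ of $I$. Since $J$ is unbounded in $I$, the intersection $J \cap I_0$ is a final segment of $J$, which means the same truth value $\bold t$ is eventually realized along $\bar{\bold c}$. Hence $\varphi(\bar x,\bar b)^{\bold t} \in \Av_\Delta(A,\bar{\bold a})$ iff $\varphi(\bar x,\bar b)^{\bold t} \in \Av_\Delta(A,\bar{\bold c})$.

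For the forward implication, assume $\bar{\bold a}^1, \bar{\bold a}^2$ are $\Delta$-perpendicular and let $A \subseteq \cC$ be arbitrary. Set $A' = A \cup \bar{\bold a}^1 \cup \bar{\bold a}^2$, apply $\Delta$-perpendicularity of $\bar{\bold a}^1,\bar{\bold a}^2$ to $A'$, and obtain a family $\langle \bar b^\ell_n : \ell \in \{1,2\}, n < \omega\rangle$ as in Definition \ref{15}(3). By the key observation, each $\bar b^\ell_n$ also realizes the corresponding $\bar{\bold c}^\ell$-average over $A' \cup \{\bar b^k_m : \ldots\}$, and this average includes the one over the smaller set $A \cup \bar{\bold c}^1 \cup \bar{\bold c}^2 \cup \{\bar b^k_m : \ldots\}$ required for perpendicularity of $\bar{\bold c}^1, \bar{\bold c}^2$. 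Since mutual $\Delta$-indiscernibility over $A'$ implies mutual $\Delta$-indiscernibility over the subset $A \cup \bar{\bold c}^1 \cup \bar{\bold c}^2$, the family $\langle \bar b^\ell_n\rangle$ witnesses $\Delta$-perpendicularity of $\bar{\bold c}^1, \bar{\bold c}^2$ over $A$. The reverse direction is entirely symmetric: given $A$, enlarge to $A'' = A \cup \bar{\bold a}^1 \cup \bar{\bold a}^2$, which already contains $\bar{\bold c}^1 \cup \bar{\bold c}^2$, apply perpendicularity of the $\bar{\bold c}^\ell$, and reinterpret the resulting averages via the key observation.

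The main obstacle is not conceptual but notational bookkeeping: one must verify that the base sets arising in Definition \ref{13} contain the relevant indiscernible sequence (either $\bar{\bold a}^\ell$ or $\bar{\bold c}^\ell$), and that the level of mutual indiscernibility obtained is over a set that suffices for the target conclusion. Both issues dissolve once $A$ is replaced by $A \cup \bar{\bold a}^1 \cup \bar{\bold a}^2$ on the ``bigger'' side; the only genuine input is Claim \ref{5}, used once to make ``for every large enough $t$'' invariant under passage to unbounded subsequences.
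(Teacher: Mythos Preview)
Your proof is correct and follows essentially the same approach as the paper's. The paper's proof consists of a single sentence stating exactly your ``key observation'' (that $\Av_\Delta(A,\langle \bar a_t : t \in I\rangle) = \Av_\Delta(A,\langle \bar a_t : t \in J\rangle)$ whenever $J$ is unbounded in $I$) and leaves the remaining bookkeeping implicit; you have simply spelled out both the justification via Claim~\ref{5} and the verification that the witnessing families transfer between the two pairs.
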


\begin{proof}
This follows from the following fact: if $\langle \bar a_t:t \in
I\rangle$ is $\Delta$-indiscernible and $J \subseteq I$ is unbounded,
then for every $A \subset \cC$ we have $\Av_\Delta(A,\langle \bar
a_t:t \in I\rangle) = \Av_\Delta(A,\langle \bar a_t:t \in J\rangle)"$.
\end{proof}

\begin{claim}
\label{20}
Let $\langle a^1_t:t \in I^1\rangle,\langle \bar a^2_t:t \in
I^2\rangle$ be two indiscernible sequences.  Then for every formula
$\varphi(\bar x,\bar y)$ possibly with parameters there exists a truth
value $\bold t$ such that for every large enough $t \in I^1$ for every
large enough $s \in I^2$ we have $\models \varphi[\bar a^1_t,\bar
  a^2_s]^{\bold t}$.
\end{claim}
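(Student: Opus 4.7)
The plan is to apply Claim \ref{5} symmetrically, once in each coordinate, together with an alternation/pigeonhole argument along $I^1$. Without loss of generality both $I^1$ and $I^2$ are endless, since otherwise ``for every large enough'' is trivial at an end carrying a maximum.

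First I would extract from Claim \ref{5} a single integer $N_\varphi$ bounding the number of truth-value changes of $\langle \varphi[\bar b,\bar c_r] : r \in R\rangle$ along any indiscernible sequence $\langle \bar c_r : r \in R\rangle$ and any parameter $\bar b$; this is immediate, since the ``true'' set is a union of at most $k_\varphi$ convex subsets of the linear order, whose characteristic function switches only finitely many times, uniformly in $\bar b$. Applying this with $\bar{\bold a}^2$ in the second slot and parameter $\bar a^1_t$ for each $t \in I^1$, the set $\{s \in I^2 : \models \varphi[\bar a^1_t,\bar a^2_s]\}$ is a finite union of convex subsets of the endless $I^2$, so it either contains a final segment of $I^2$ or is disjoint from one. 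I would then define $\bold t(t) \in \{T,F\}$ to be the corresponding eventual truth value; the claim then amounts to showing that $\bold t(t)$ is eventually constant as $t \to \sup I^1$.

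For this I would argue by contradiction: if both values of $\bold t(t)$ occur cofinally in $I^1$, I can pick $t_0 <_{I^1} t_1 <_{I^1} \cdots <_{I^1} t_M$ with $\bold t(t_i)$ strictly alternating and $M > N_\varphi$. For each $i \le M$ choose $s^*_i \in I^2$ past which $\models \varphi[\bar a^1_{t_i},\bar a^2_s]^{\bold t(t_i)}$ holds, and then pick $s^* \in I^2$ past every one of the finitely many $s^*_i$ (possible since $I^2$ is endless). The sequence $\langle \varphi[\bar a^1_{t_i},\bar a^2_{s^*}] : i \le M\rangle$ then switches value $M$ times, contradicting the bound $N_\varphi$ applied in the other direction --- namely, to the indiscernible sequence $\bar{\bold a}^1$ with parameter $\bar a^2_{s^*}$.

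I do not expect a serious obstacle; the only delicate bookkeeping is translating the ``at most $k_\varphi$ convex subsets'' bound of Claim \ref{5} into the uniform sign-change bound $N_\varphi$, and then ensuring that the symmetric exchange of the two sequences in the final contradiction is legitimate. Once those are in place the proof is essentially a symmetric double invocation of Claim \ref{5}, with the endlessness of $I^2$ used to find a common upper bound $s^*$ for the finitely many $s^*_i$.
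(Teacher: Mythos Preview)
Your proposal is correct and follows essentially the same route as the paper's proof: apply Claim~\ref{5} to $\bar{\bold a}^2$ with parameter $\bar a^1_t$ to get an eventual truth value $\bold t(t)$ for each $t$, assume for contradiction that $\bold t(t)$ alternates cofinally in $I^1$, pick a finite alternating string of $t_i$'s, find a single $s^*\in I^2$ large enough for all of them, and contradict the uniform bound from Claim~\ref{5} applied to $\bar{\bold a}^1$ with parameter $\bar a^2_{s^*}$. Your write-up is in fact more explicit than the paper's about isolating the uniform alternation bound $N_\varphi$ and about the endlessness reduction, but the argument is the same.
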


\begin{proof}
Assume the contrary.  Then by Claim \ref{5} for any $n \in \omega$ there
exists an increasing sequence $t_0 <_{I^1} \ldots <_{I^1} t_{n-1}$
such that for any $j =0,\dotsc,n-1$ for every large enough $s \in I^2$
we have that $\models \varphi[\bar a^1_{t_j},\bar a^2_s]$ iff $j$
odd.  $n$ is finite and therefore we can find $s \in I$ large enough
for every $j<n$.  Hence for every $n \in \omega$ there exists some $s
\in I^2$ with $\{t \in I^1: \models \varphi[\bar a^1_t,\bar a^2_s]\}$
contains no less than $\frac n2$ convex subsets of $I^1$,
contradicting the Claim \ref{5}.
\end{proof}

\begin{remark}
While Claim \ref{20} is symmetric, note that in Claim \ref{21}(4) we
choose $\bold t$ in advance which must cohere for both directions.
\end{remark}

\begin{claim}
\label{21}
Let $\bar{\bold a}^1 = \langle \bar a^1_t:t \in I^1\rangle,\bar{\bold
  a}^2 = \langle \bar a^2_t:t \in I^2\rangle$ be two indiscernible
  sequences.  \Then \, the following are equivalent:
\mn
\begin{enumerate}
\item  $\bar{\bold a}^1,\bar{\bold a}^2$ are perpendicular
\sn
\item  for every $\langle \bar b^\ell_n:\ell=1,2;n \in \omega\rangle$
  such that each $\bar b^\ell_n$ realizes
\newline
$\Av(\bar{\bold a}^1 \cup \bar{\bold a}^2 \cup\{\bar b^k_m:k \in
\{1,2\},m<n$ or $(m=n \wedge k<\ell)\},\bar{\bold a}^\ell)$
\newline
we have that $\langle \bar b^1_n:n \in \omega\rangle,\langle \bar
b^2_n:n \in \omega\rangle$ are mutually indiscernible
\sn
\item  for every $A \subset \cC$ if $\bar b^1$ realizes
  $\Av(\bar{\bold a}^1,\bar{\bold a}^1 \cup \bar{\bold a}^2 \cup A)$
  and $\bar b^2$ realizes $\Av(\bar{\bold a}^2,
\bar{\bold a}^1 \cup \bar{\bold a}^2 \cup A 
\cup \bar b^1)$ then $\bar b^1$ also realizes
$\Av(\bar{\bold a}^1,\bar{\bold a}^1 \cup \bar{\bold a}^2 \cup A \cup b^2)$
\sn
\item for every formula $\varphi(\bar x,\bar y)$ possibly with
  parameters there exists a truth value $\bold t$ such that for every
  large enough $t \in I^1$ for every large enough $s \in I^2$ we have
  $\models \varphi[\bar a^1_t,\bar a^2_s]^{\bold t}$ and for every
  large enough $s \in I^2$ for every large enough $t \in I^1$ we have
  $\models \varphi[\bar a^1_t,\bar a^2_s]^{\bold t}$
\sn
\item  for some $(|\bar{\bold a}^1| + |\bar{\bold a}^2|)^+$-saturated
  model $\cF$ containing $\bar{\bold a}^1,\bar{\bold a}^2$ for every
formula $\varphi(\bar x,\bar y)$ possibly with parameters from
  $\cF$ there exists a truth value $\bold t$ such that for every large
  enough $t \in I^1$ for every large enough $s \in I^2$ we have
  $\models \varphi[\bar a^1_t,\bar a^2_s]^{\bold t}$ and for every
  large enough $s \in I^2$ for every large enough $t \in I^1$ we have
  $\models \varphi[\bar a^1_t,\bar a^2_s]^{\bold t}$.
\end{enumerate}
\end{claim}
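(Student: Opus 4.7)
My plan is to establish the cycle $(1)\Leftrightarrow(2)\Rightarrow(4)\Rightarrow(3)\Rightarrow(2)$ together with $(4)\Leftrightarrow(5)$. The outer pieces will be easy: I will get $(1)\Leftrightarrow(2)$ from Claim~\ref{17} applied with $\Delta=\bbL(T)$; $(4)\Rightarrow(5)$ is trivial; and for $(5)\Rightarrow(4)$ I will use the $(|\bar{\bold a}^1|+|\bar{\bold a}^2|)^+$-saturation of $\cF$ to replace an arbitrary parameter tuple $\bar c\subset\cC$ by a $\bar c'\in\cF$ realizing the same type over $\bar{\bold a}^1\cup\bar{\bold a}^2$, noting that the two nested ``for large enough'' conditions in (4) depend only on that type.

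For the substantive core $(2)\Leftrightarrow(3)\Leftrightarrow(4)$, the main tool will be the bookkeeping already developed in Claims~\ref{5}, \ref{14}, and \ref{20}. To prove $(2)\Rightarrow(4)$ I will fix $\varphi(\bar x,\bar y,\bar c)$, apply (2) with $A=\{\bar c\}$ to produce $\langle\bar b^\ell_n\rangle$, and evaluate the truth value of $\models\varphi[\bar b^1_0,\bar b^2_0,\bar c]$ in two ways. Unfolding the average defining $\bar b^2_0$ over a set containing $\bar b^1_0$, and then the average defining $\bar b^1_0$ (whose base already contains the $\bar a^2_s$'s), yields the truth value of ``for large $s$, for large $t$, $\models\varphi[\bar a^1_t,\bar a^2_s,\bar c]$''. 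In the other direction, mutual indiscernibility of $\langle\bar b^1_n\rangle$ and $\langle\bar b^2_n\rangle$ forces $\tp(\bar b^1_0/\bar{\bold a}^1\cup\bar{\bold a}^2\cup A\cup\bar b^2_0)=\tp(\bar b^1_1/\bar{\bold a}^1\cup\bar{\bold a}^2\cup A\cup\bar b^2_0)$; since $\bar b^1_1$ was constructed to realize $\Av(\bar{\bold a}^1,\ldots\cup\bar b^2_0)$, so does $\bar b^1_0$, and the symmetric unfolding delivers the truth value of ``for large $t$, for large $s$,\ldots''. Equality of the two evaluations is (4).

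For $(4)\Rightarrow(3)$ I will take $\bar b^1,\bar b^2$ as in (3) and a formula $\varphi(\bar x,\bar b^2,\bar d)$ with $\bar d\subseteq A\cup\bar{\bold a}^1\cup\bar{\bold a}^2$, then chase averages to prove
\[
\models\varphi[\bar b^1,\bar b^2,\bar d]\;\Longleftrightarrow\;\text{for large }t,\ \models\varphi[\bar a^1_t,\bar b^2,\bar d].
\]
Using $\bar b^2\in\Av(\bar{\bold a}^2,\cdots\cup\bar b^1)$ and $\bar b^1\in\Av(\bar{\bold a}^1,\cdots)$, the right-hand side collapses to ``for large $t$, for large $s$,\ldots'' while the left-hand side collapses to ``for large $s$, for large $t$,\ldots''; (4) equates the two. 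Finally, for $(3)\Rightarrow(2)$ I will construct $\langle\bar b^\ell_n\rangle$ inductively, invoking (3) at every step to promote each $\bar b^\ell_n$ to a realization of the average of $\bar{\bold a}^\ell$ over a set that also contains the already-chosen $\bar b^{3-\ell}_m$; Claim~\ref{14}(2), applied to the interleaved sequence with the order introduced in the proof of Claim~\ref{17}, then gives full mutual indiscernibility.

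I expect the main obstacle to be the promotion step in $(2)\Rightarrow(4)$: upgrading mutual indiscernibility of $\langle\bar b^1_n\rangle,\langle\bar b^2_n\rangle$ into the stronger statement that $\bar b^1_0$ actually realizes the average of $\bar{\bold a}^1$ over a set containing $\bar b^2_0$. That step is precisely what permits the swap of the ``for large $t$'' and ``for large $s$'' quantifiers, and it is the place where the one-sided Claim~\ref{20} is strengthened to the symmetric condition (4).
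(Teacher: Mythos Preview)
Your plan has a real gap precisely at the step you flagged as the main obstacle, the promotion in $(2)\Rightarrow(4)$. Condition (2), as stated, yields mutual indiscernibility only over $\emptyset$ (Definition~\ref{15}(2)): you get $\tp(\bar b^1_0/\langle\bar b^2_n\rangle)=\tp(\bar b^1_1/\langle\bar b^2_n\rangle)$, but \emph{not} equality of types over $\bar{\bold a}^1\cup\bar{\bold a}^2\cup\{\bar c\}\cup\{\bar b^2_0\}$. Since the formula $\varphi$ carries the external parameter $\bar c$, you cannot conclude $\models\varphi[\bar b^1_0,\bar b^2_0,\bar c]\Leftrightarrow\models\varphi[\bar b^1_1,\bar b^2_0,\bar c]$. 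Building the $\bar b^\ell_n$ over $A=\{\bar c\}$ does not help: those sequences still satisfy the hypothesis of (2), but (2) still only returns indiscernibility over $\emptyset$. Notice also that your argument for $(2)\Rightarrow(4)$ never invokes the dependence of $T$; that should be a warning sign, since (2) is the weakest hypothesis in the list and getting from it to the parameter-uniform statement (4) is exactly where dependence must enter.

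The paper handles this by proving $(2)\Rightarrow(3)$ by contradiction instead. If (3) fails with witness $\varphi(\bar x,\bar y)=\psi(\bar x,\bar y,\bar c)$ ($\psi$ parameter-free), one shows $\psi(\bar x,\bar y,\bar z)$ is independent. The key trick is to existentially quantify out $\bar c$: from $\models\bigwedge_{j<n}\psi(\bar b^1_{k_j},\bar b^2_{\ell_j},\bar c)^{\eta(j)}$ pass to $\models\exists\bar z\,\bigwedge_{j<n}\psi(\bar b^1_{k_j},\bar b^2_{\ell_j},\bar z)^{\eta(j)}$; this formula has parameters only among the $\bar b$'s, so mutual indiscernibility over $\emptyset$ \emph{does} suffice to straighten the indices to $(\bar b^1_j,\bar b^2_j)$, contradicting dependence of $T$. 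Replace your $(2)\Rightarrow(4)$ by this argument for $(2)\Rightarrow(3)$; your chasing-of-averages argument then gives $(3)\Rightarrow(4)$ (and the reverse $(4)\Rightarrow(3)$, as you noted), and the cycle closes.

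A smaller issue: Claim~\ref{17} does not give $(2)\Rightarrow(1)$. It only toggles ``for some'' versus ``for every'' in the choice of the $\bar b^\ell_n$, not the quantification over $A$. This is easily repaired, since your argument for $(3)\Rightarrow(2)$ works verbatim with an arbitrary base set $A$ and therefore actually proves $(3)\Rightarrow(1)$; that is what the paper does.
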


\begin{proof}
1) We will show $(1) \rightarrow (2) \rightarrow (3) \rightarrow (4)
\rightarrow (5),(5) \rightarrow (4) \rightarrow (3) \rightarrow (1)$,
which suffices.

\noindent
2) Follows from (1) by choosing $A = \phi$ in Definition \ref{15}(3) and
Claim \ref{17}.

Now assume (2) and by contradiction assume that (3) fails for some $A$.
Now let $\langle \bar b^\ell_n:\ell=1,2;n \in \omega\rangle$ be such
that $\bar b^\ell_n$ realizes $\Av(A \cup \bar{\bold a}^1 \cup
\bar{\bold a}^2 \cup \{\bar b^k_m:k \in \{1,2\},m<n$ or $(m=n \wedge k
< \ell)\}$ for $\ell = 1,2,n \in \omega$.  (3) fails, so there exists
some formula $\varphi(\bar x,\bar y)$ with parameters from $A \cup
\bar{\bold a}^1 \cup \bar{\bold a}^2$ such that $\models \varphi[\bar
  b^1_k,\bar b^2_\ell]$ iff $k > \ell$.  Now denote $\psi(\bar x,\bar
y,\bar c) = \varphi(\bar x,\bar y)$ such that $\psi(\bar x,\bar y,\bar
z)$ is parameter free.  We claim that $\psi(\bar x,\bar y,\bar z)$ is
independent.  So for every $n \in \omega$ and $\eta \in {}^n 2$ we can
choose $k_0 < \ldots < k_{n-1}$ and $\ell_0 < \ldots < \ell_{n-1}$
such that for every $j=0,\dotsc,n-1$ we have $k_j < \ell_j$ iff
$\eta(j)=0$.  Now $\models \bigwedge\limits_{j<n} \psi(\bar
b^1_{k_j},\bar b^2_{\ell_j},\bar c)^{\eta(j)}$ and by
mutual-indiscernibility $\models \exists \bar z \bigwedge\limits_{j<n}
\psi(\bar b^1_j,\bar b^2_j,\bar z)^{\eta(j)}$ so the formula
$\psi(\bar x,\bar y,\bar z)$ is independent, contradiction.

To prove (4) from (3) let $\varphi(\bar x,\bar y,\bar c)$ be some formula
with $\bar c \subseteq A$.  Now let $\bar b^1$ realize 
$\Av(\bar{\bold a}^1 \cup \bar{\bold a}^2 \cup A,\bar{\bold a}^1)$ 
and let $\bar b^2$ realize $\Av(\bar{\bold a}^1 
\cup \bar{\bold a}^2 \cup A \cup
\bar b^1,\bar{\bold a}^2)$.  By Claim \ref{20} there exist truth values
$\bold t^1,\bold t^2$ such that for every large enough $t \in I^1$ for
every large enough $s \in I^2$ we have $\models \varphi[\bar
  a^1_t,\bar a^2_s,\bar c]^{\bold t^1}$ and for every large enough $s
\in I^2$ for every large enough $t \in I^1$ we have $\models
\varphi[\bar a^1_t,\bar a^2_s,\bar c]^{\bold t^2}$.  We will prove
that $\bold t^1 = \bold t^2$.  Now for every large enough $s \in I^2$
we have that $\models \varphi[\bar b^1,\bar a^2_s,\bar c]^{\bold
  t^2}$.  By definition of average $\models \varphi[\bar b^1,\bar
  b^2,\bar c]^{\bold t^2}$.  Now by (3) we have that $\bar b^1$ realize
$\Av(\bar{\bold a}^1 \cup \bar{\bold a}^2 \cup A \cup \bar
b^2,\bar{\bold a}^1)$, hence for every large enough $t \in I^1$ we
have $\models \varphi[\bar a^1_t,\bar b^2,\bar c]^{\bold t^2}$.  So
for every large enough $t \in I^1$ for every large enough $s \in I^2$
we have $\models \varphi[\bar a^1_t,\bar a^2_s,\bar c]^{\bold t^2}$.
So $\bold t^1 = \bold t^2$ and we are done.  

\noindent
(3) Follows from (4) is symmetric.

\noindent
(5) Follows from (4) trivially.

To prove (4) from (5) assume toward contradiction that (4) fails.  So some
formula $\varphi(\bar x,\bar y,\bar d)$ witness the failure of (4).  Now
let $\bar e \subset \cF$ realize $\tp(\bar d,\bar{\bold a}^1 \cup
\bar{\bold a}^2)$.  So $\varphi(\bar x,\bar y,\bar e)$ witness the
failure of (5).  Contradiction and so (4) follows from (5).

To (1) from (3) it is enough to notice that by (3) $\langle \bar b^1_n:n \in
\omega\rangle$ constructed as in Definition \ref{15}(3) is actually a
sequence based on $\bar{\bold a}$ over $A \cup \langle \bar b^2_n:n
\in \omega\rangle$ and vice-versa.  Hence by Claim \ref{14} and Remark
\ref{r1} the two sequences are mutually indiscernible and we are done.
\end{proof}

\noindent
We now introduce a weaker version of perpendicularity to use in the
next section.
\begin{definition}
\label{22}
Two indiscernible sequences $\bar{\bold a}^1 = \langle \bar a^1_t:t
\in I^1\rangle,\bar{\bold a}^2 = \langle \bar a^2_s:s \in I^2\rangle$
will be called $(\Delta,A)$-perpendicular where $\Delta$ is some set
of formulas of the form $\varphi(\bar x,\bar y,\bar z)$ with $\ell
g(\bar x) = \ell g(\bar a^1_t),\ell g(\bar y) = \ell g(\bar a^2_s)$
and $\bar{\bold a}^1 \cup \bar{\bold a}^2 \subseteq A$ iff for every
$\varphi(\bar x,\bar y,\bar z) \in \Delta,\bar d \subset A$ with $\ell
g(\bar d) = \ell g(\bar z)$ for some truth value $\bold t$ we have
that for every large enough $g \in I^1$ for every large enough $s \in
I^2$ we have $\models \varphi[\bar a^1_t,\bar a^2_s,\bar d]^{\bold t}$
and for every large enough $s \in I^2$ for every large enough $t \in
I^1$ we have $\models \varphi[\bar a^1_t,\bar a^2_s,\bar d]^{\bold
  t}$.  Omitting the $\Delta$ means for $\Delta = \bbL(T)$.
\end{definition}

\noindent
As a corollary of \ref{21} we obtain:
\begin{corollary}
\label{23}
1) If $M$ is $(|\bar{\bold a}^1| + |\bar{\bold a}^2|)^+$-saturated
   then $\bar{\bold a}^1,\bar{\bold a}^2$ are
   $(\Delta,M)$-perpendicular iff they are $\Delta$-perpendicular.

\noindent
2) If $B \subseteq A,\Delta_B \subseteq \Delta_A$ and 
$\bar{\bold a}^1,\bar{\bold a}^2$ are $(\Delta_A,A)$-perpendicular
then they are $(\Delta_B,B)$-perpendicular.
\end{corollary}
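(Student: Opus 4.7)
Part (2) is immediate from Definition \ref{22}: its defining condition is a universal statement over $\varphi \in \Delta_A$ and $\bar d \subset A$, and under $\Delta_B \subseteq \Delta_A$, $B \subseteq A$ the condition required for $(\Delta_B, B)$-perpendicularity is merely a restriction of the hypothesis. So the work is entirely in part (1).

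For part (1), the plan is to factor the equivalence through the intermediate notion of $(\Delta, \cC)$-perpendicularity, establishing
\[
\Delta\text{-perp} \;\Longleftrightarrow\; (\Delta, \cC)\text{-perp} \;\Longleftrightarrow\; (\Delta, M)\text{-perp}.
\]
The first equivalence is a $\Delta$-relativized version of (1) $\Leftrightarrow$ (4) in Claim \ref{21}, and I would run the proof from Claim \ref{21} essentially verbatim: the supporting Claims \ref{14}, \ref{17}, \ref{20} are already stated $\Delta$-relatively, so nothing changes. Concretely, for $\Delta$-perp $\Rightarrow$ $(\Delta, \cC)$-perp I apply Definition \ref{15}(3) with $A = \cC$; for $\varphi(\bar x,\bar y,\bar z) \in \Delta$ and $\bar d \subset \cC$, Claim \ref{20} yields candidate truth values $\bold t^1, \bold t^2$, and the mutual $\Delta$-indiscernibility of the based sequences $\langle \bar b^\ell_n\rangle$ forces $\bold t^1 = \bold t^2$ by the averaging chain from the proof of (3) $\Rightarrow$ (4) in Claim \ref{21}. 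The converse direction is the dual: any based sequences as in Definition \ref{15}(3) are individually $\Delta$-indiscernible by Claim \ref{14}(2), and the uniform truth-value coherence provided by $(\Delta, \cC)$-perp delivers the required mutual $\Delta$-indiscernibility.

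For the second equivalence, $(\Delta, \cC)$-perp $\Rightarrow$ $(\Delta, M)$-perp is trivial since $M \subseteq \cC$. The nontrivial converse is the saturation step, modelled on the proof of (4) from (5) in Claim \ref{21}: given any $\varphi(\bar x,\bar y,\bar z) \in \Delta$ and arbitrary $\bar d \subset \cC$, use the $(|\bar{\bold a}^1| + |\bar{\bold a}^2|)^+$-saturation of $M$ to produce $\bar e \subset M$ realizing $\tp(\bar d / \bar{\bold a}^1 \cup \bar{\bold a}^2)$; since the Definition \ref{22} condition for $\varphi(\cdot,\cdot,\bar d)$ is fully determined by $\tp(\bar d / \bar{\bold a}^1 \cup \bar{\bold a}^2)$, it transfers from the known case $\bar e$ to $\bar d$. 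The main (mild) obstacle throughout is just bookkeeping: verifying that each ingredient of Claim \ref{21}'s proof respects the $\Delta$-restriction, which is automatic given the $\Delta$-relative form of the supporting claims.
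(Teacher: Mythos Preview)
Your proposal is correct and matches the paper's approach: the paper presents Corollary \ref{23} as an immediate corollary of Claim \ref{21} with no separate proof, and what you have written is precisely the $\Delta$-relativization of the chain (1)$\Leftrightarrow$(4)$\Leftrightarrow$(5) from that claim, together with the trivial monotonicity observation for part (2).
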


\begin{claim}
\label{24}
Let $\bar{\bold a}^1 = \langle \bar a^1_t:t \in I^1\rangle,\bar{\bold
  a}^2 = \langle \bar a^2_t:t \in I^2\rangle$ be two $A$-perpendicular
  sequences.  Assume that for some $\bar{\bold b}^1 = \langle \bar
  b^1_n:n \in \omega \rangle,\bar{\bold b}^2 = \langle \bar b^2_n:n
  \in \omega\rangle$ we have that $\bar{\bold b}^1,\bar{\bold b}^2
  \subseteq A$ and for every $n \in \omega,\ell \in \{1,2\}$ we have
  that $b^\ell_n$ realizes

\[
\Av(\bar{\bold a}^1 \cup \bar{\bold a}^2 \cup \{\bar b^k_m:k \in
\{1,2\},m < n \text{ or } (m=n \wedge k < \ell)\},\bar{\bold a}^\ell).
\]

\mn
Then $\bar{\bold a}^1,\bar{\bold a}^2$ are perpendicular.
\end{claim}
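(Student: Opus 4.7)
The plan is to deduce perpendicularity by invoking the implication (2)$\Rightarrow$(1) of Claim~\ref{21}. Observe that the given sequences $\bar{\bold b}^1,\bar{\bold b}^2$ already constitute one specific realization of the averaging schema appearing in Claim~\ref{21}(2) (with no extra parameter set added to the $\Av$-base). By Claim~\ref{17}, whose proof (paralleling Claim~\ref{14}(1)) shows that any two such realizations realize the same type over $\bar{\bold a}^1\cup\bar{\bold a}^2$, it suffices to prove that this particular pair $\bar{\bold b}^1,\bar{\bold b}^2$ is mutually indiscernible over $\bar{\bold a}^1\cup\bar{\bold a}^2$; perpendicularity of $\bar{\bold a}^1,\bar{\bold a}^2$ will then follow from Claim~\ref{21}.

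To establish the mutual indiscernibility, I would fix a formula $\chi$ with parameters $\bar e\subseteq\bar{\bold a}^1\cup\bar{\bold a}^2$ and arbitrary increasing index tuples $n_1<\cdots<n_k$ and $m_1<\cdots<m_p$, and compute the truth value of $\chi(\bar b^1_{n_1},\ldots,\bar b^1_{n_k},\bar b^2_{m_1},\ldots,\bar b^2_{m_p},\bar e)$ by iteratively substituting each $\bar b^\ell_{\cdot}$ with the corresponding ``for every large enough index'' quantifier over $\bar{\bold a}^\ell$, proceeding in reverse order of construction time (the most recently built $\bar b$-term first). Each such substitution is legitimate because at that moment all remaining parameters lie in the $\Av$-base of the $\bar b$ being replaced. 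The result is an iterated ``for every large enough'' expression in indices $t_1,\ldots,t_k,s_1,\ldots,s_p$ applied to $\chi(\bar a^1_{t_1},\ldots,\bar a^1_{t_k},\bar a^2_{s_1},\ldots,\bar a^2_{s_p},\bar e)$, whose nesting order depends on how the $n_i$'s interleave with the $m_j$'s.

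To conclude that the resulting truth value is independent of the interleaving, I would invoke $A$-perpendicularity. Since $\bar{\bold a}^1\cup\bar{\bold a}^2\subseteq A$, Definition~\ref{22} provides commutativity of any adjacent pair ``for every large enough $t$'' / ``for every large enough $s$'' applied to a formula whose parameters lie in $\bar{\bold a}^1\cup\bar{\bold a}^2$, and iterated such swaps bring any interleaving to a common canonical form. The principal technical obstacle is that when swapping an outer pair of quantifiers, the remaining inner expression still contains nested ``for every large enough'' quantifiers and is not literally a first-order formula; I expect to handle this by induction on the nesting depth, observing at each swap step that by dependency (Claim~\ref{5}) and the indiscernibility of $\bar{\bold a}^1,\bar{\bold a}^2$, the inner expression is determined by the type of the immediately surrounding bound variables over $\bar{\bold a}^1\cup\bar{\bold a}^2\cup\bar e$, hence equivalent (in the relevant range) to a first-order formula with parameters still in $A$, to which $A$-perpendicularity applies directly.
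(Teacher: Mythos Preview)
Your overall strategy matches the paper's: both reduce to Claim~\ref{21}(2) and aim to show that the specific pair $\bar{\bold b}^1,\bar{\bold b}^2$ is mutually indiscernible over $\bar{\bold a}^1\cup\bar{\bold a}^2$. Your invocation of Claim~\ref{17} to justify checking only this one realization is also correct.

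The divergence is in how mutual indiscernibility is established, and here your route is harder than it needs to be and leaves a real gap. You unfold \emph{all} of the $\bar b^\ell_n$'s into iterated ``for every large enough'' quantifiers simultaneously and then try to permute adjacent $\forall^*$'s. You correctly identify the obstacle: after one swap, the inner block is no longer a first-order formula, so $A$-perpendicularity does not literally apply. Your proposed fix---that the inner block is ``equivalent (in the relevant range) to a first-order formula with parameters in $A$''---is not justified. The condition ``for every large enough $t_2\in I^1$, $\chi(\bar a^1_{t_1},\bar a^2_{s_1},\bar a^1_{t_2},\ldots)$'' depends on cofinality in the sequence $\bar{\bold a}^1$, which is not in general a definable set; nothing in Claim~\ref{5} or indiscernibility alone produces a single formula over $A$ expressing this uniformly in $\bar a^1_{t_1},\bar a^2_{s_1}$.

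The paper sidesteps this entirely by exploiting the hypothesis $\bar{\bold b}^1,\bar{\bold b}^2\subseteq A$ more directly. Rather than unfolding the $\bar b^2_m$'s, it keeps them as parameters and proves by induction on $m\ge n$ that $\bar b^1_n$ realizes $\Av(\bar{\bold a}^1\cup\bar{\bold a}^2\cup\{\bar b^1_k:k<n\}\cup\{\bar b^2_k:k<m\},\bar{\bold a}^1)$. The inductive step handles a single new parameter $\bar b^2_m\in A$: one unfolds only $\bar b^2_m$ (via its own average), applies $A$-perpendicularity to the resulting honest formula $\varphi(\bar x,\bar y,\bar c)$ with $\bar c\subseteq A$, and then refolds using the averages already known for $\bar b^1_n$ and $\bar b^2_m$. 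No nested $\forall^*$ ever appears. Once this is done for all $m$, $\bar{\bold b}^1$ is based on $\bar{\bold a}^1$ over $\bar{\bold b}^2$ in the sense of Definition~\ref{13}, and Claim~\ref{14} yields the indiscernibility. In short: unfold one $\bar b$ at a time, not all at once; that is exactly what the assumption $\bar{\bold b}^\ell\subseteq A$ buys you.
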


\begin{proof}
By claim \ref{21}(2) it is enough to show that $\bar{\bold
  b}^1,\bar{\bold b}^2$ are mutually indiscernible.  So it is enough
to show that $\bar{\bold b}^1$ is based on $\bar{\bold a}^1$ over
  $\bar{\bold b}^2$ (the proof of the other direction is symmetric).
  This will follow if we show that for every $n \in \omega,m > n$ we
  have that $\bar b^1_n$ realize

\[
\Av(\bar{\bold a}^1 \cup \bar{\bold a}^2 \cup \{\bar b^1_k:k < n\}
\cup \{\bar b^2_k:k < m\},\bar{\bold a}^1).
\]

\mn
We will prove this by induction on $m \in \omega$.  So assume that
$\bar b^1_n$ realize

\[
\Av(\bar{\bold a}^1 \cup \bar{\bold a}^2 \cup \{\bar b^1_k:k < n\}
\cup \{\bar b^2_k:k < m\},\bar{\bold a}^1).
\]

\mn
Now we know that $\bar b^2_m$ realize

\[
\Av(\bar{\bold a}^1 \cup \bar{\bold a}^2 \cup \{\bar b^k_s:k \in
\{1,2\},s < m \text{ or } (s=m \wedge k=1)\},\bar{\bold a}^2).
\]

\mn
Let $\varphi(\bar x,\bar b^2_m,\bar c) \in \Av(\bar{\bold a}^1 \cup 
\bar{\bold a}^2 \cup \{\bar b^1_k:k < n\}$ or 
$\{\bar b^2_k:k < m+1\},\bar{\bold a}^1)$ with $\bar c \subseteq
\bar{\bold a}^1 \cup \bar{\bold a}^2 \cup \{\bar b^1_k:k < n\} \cup
\{\bar b^2_k:k < m\} \subseteq A$.  So for every large enough $t \in
I^1$ we have that $\models \varphi[\bar a^1_t,\bar b^2_m,\bar c]$.
Hence for every large enough $s \in I^2$ we have that $\models
\varphi[\bar a^1_t,\bar a^2_s,\bar c]$.  By $A$-perpendicularity we
have that for every large enough $s \in I^2$ for every large enough $t
\in I^1$ we have $\models\varphi[\bar a^1_t,\bar a^2_s,\bar c]$.  So
for every large enough $s \in I^2$ we have $\models \varphi[\bar
  b^1_n,\bar a^2_m,\bar c]$.  Hence $\models \varphi[\bar b^1_n,\bar
  b^2_m,\bar c]$.  So $\bar b^1_n$ realize $\Av(\bar{\bold a}^1 \cup
\bar{\bold a}^2 \cup \{\bar b^1_k:k < n\} \cup \{\bar b^2_k:k <
m+1\},\bar{\bold a}^1)$ and we are done.
\end{proof}

\begin{claim}
\label{25}
Let $\bar{\bold a}^1 = \langle \bar a^1_t:t \in I^1\rangle,\bar{\bold
  a}^2 = \langle \bar a^2_s:s \in I^2\rangle$ be two not
  $(\Delta,A)$-perpendicular sequences for some $\phi \ne A 
\subseteq \cC$. Then $\cf(\bar{\bold a}^1) = \cf(\bar{\bold a}^2)$.
\end{claim}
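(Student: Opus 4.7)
The plan is to argue by contrapositive using a cofinality-counting argument: the asymmetry between the two iterated ``large enough'' nestings forces the two index sets to have the same cofinality.

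First I would unpack the hypothesis. By Claim \ref{20} applied to the pair $(\bar{\bold a}^1,\bar{\bold a}^2)$ and, separately, to the reversed pair, for every formula $\varphi(\bar x,\bar y,\bar z) \in \Delta$ and every $\bar d \subseteq A$ of matching length there are (uniquely determined) truth values $\bold t_1 = \bold t_1(\varphi,\bar d)$ and $\bold t_2 = \bold t_2(\varphi,\bar d)$ such that
(A) for every large enough $t \in I^1$ for every large enough $s \in I^2$, $\models \varphi[\bar a^1_t,\bar a^2_s,\bar d]^{\bold t_1}$, and
(B) for every large enough $s \in I^2$ for every large enough $t \in I^1$, $\models \varphi[\bar a^1_t,\bar a^2_s,\bar d]^{\bold t_2}$.
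Non-$(\Delta,A)$-perpendicularity delivers some $\varphi,\bar d$ with $\bold t_1 \ne \bold t_2$; replacing $\varphi$ by $\neg\varphi$ if necessary, we may assume $\bold t_1 = 1$ and $\bold t_2 = 0$, so (A) witnesses $\varphi$ and (B) witnesses $\neg\varphi$.

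Next I would assume for contradiction that $\cf(I^1) < \cf(I^2)$. From (A) choose $t^* \in I^1$ such that for every $t >_{I^1} t^*$ there is a threshold $s(t) \in I^2$ with $\models \varphi[\bar a^1_t,\bar a^2_s,\bar d]$ for all $s >_{I^2} s(t)$. Fix a cofinal sequence $\langle t_\alpha : \alpha < \cf(I^1)\rangle$ in $I^1$ with all $t_\alpha >_{I^1} t^*$. The family $\{s(t_\alpha) : \alpha < \cf(I^1)\}$ has cardinality strictly less than $\cf(I^2)$, hence is bounded in $I^2$; enlarging the bound using (B), I can find $s^{**} \in I^2$ majorizing every $s(t_\alpha)$ and such that for every $s >_{I^2} s^{**}$ there exists $t'(s) \in I^1$ with $\models \neg\varphi[\bar a^1_t,\bar a^2_s,\bar d]$ for all $t >_{I^1} t'(s)$.

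Now fix any $s >_{I^2} s^{**}$ and, using cofinality of $\langle t_\alpha \rangle$ in $I^1$, choose $\alpha < \cf(I^1)$ with $t_\alpha >_{I^1} t'(s)$. Then $\models \varphi[\bar a^1_{t_\alpha},\bar a^2_s,\bar d]$ (since $s >_{I^2} s^{**} >_{I^2} s(t_\alpha)$) while $\models \neg\varphi[\bar a^1_{t_\alpha},\bar a^2_s,\bar d]$ (since $t_\alpha >_{I^1} t'(s)$), a contradiction. Hence $\cf(I^1) \ge \cf(I^2)$, and the reverse inequality follows by the symmetric argument (exchanging the roles of $\bar{\bold a}^1$ and $\bar{\bold a}^2$, and correspondingly of (A) and (B)). The only delicate step is the correct interleaving of the two threshold systems $s(\cdot)$ from (A) and $t'(\cdot)$ from (B) when choosing $s^{**}$; once that is in place, the cofinality count is routine bookkeeping.
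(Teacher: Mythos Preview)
Your argument is correct. The underlying mechanism is the same as the paper's: the thresholds $s(t)$ coming from (A) must be cofinal in $I^2$, since otherwise a single $s$ above them all would make $\varphi$ hold for a cofinal set of $t$'s, contradicting (B); this forces $\cf(I^2)\le\cf(I^1)$, and symmetry finishes.

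The packaging differs slightly. The paper carries out an explicit transfinite induction of length $\lambda=\cf(I^1)$, alternately choosing $t_\alpha$ large enough for (2) relative to the previous $s_\beta$'s and then $s_\alpha$ large enough for (1) relative to $t_\alpha$; it then argues that the resulting sequence $\langle s_\alpha\rangle$ is unbounded in $I^2$, hence $\cf(I^2)\le\lambda$. You instead assume $\cf(I^1)<\cf(I^2)$ and use that inequality directly to bound the threshold set $\{s(t_\alpha):\alpha<\cf(I^1)\}$, reaching the contradiction in one step without building the interleaved ladder. Your version is a bit more economical; the paper's construction is more explicit but does the same work. Either way the content is the same cofinality-counting idea.
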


\begin{proof}
Denote $\lambda = \cf(\bar{\bold a}^1)$.  We will construct two
subsequence $\langle \bar a^1_{t_\alpha}:\alpha \in
\lambda\rangle,\langle \bar a^2_{s_\beta}:\beta \in \lambda\rangle$ of
$\bar{\bold a}^1,\bar{\bold a}^2$ respectively such that both are
unbounded.  This will prove that $\cf(\bar{\bold a}^1) =
\cf(\bar{\bold a}^2)$.  First take some unbounded subsequence $\langle
t'_\alpha:\alpha \in \lambda\rangle$ of $I^1$.  We construct the
sequences by induction on $\lambda$.  By definition of
perpendicularity we have that for some $\varphi(\bar x,\bar y,\bar c)
\in \Delta$;
\mn
\begin{enumerate}
\item   for every large enough $t \in I^1$ for every large enough $s
  \in I^2$ we have $\models \varphi[\bar a^1_t,\bar a^2_s,\bar c]$
\sn
\item  for every large enough $s \in I^2$ for every large enough $t
\in I^1$ we have $\models \varphi[\bar a^1_t,\bar a^2_s,\bar c]$.
\end{enumerate}
\mn
Now choose $t_0$ large enough as in (1) and larger than $t'_0$ and
choose $s_0$ large enough as in (1) for $t_0$ and also large enough as
in (2).  Now assume we constructed the sequences up to some $\alpha$.
Choose $t_{\alpha +1}$ large enough as in (2) for $s_\alpha$ and
larger than $t'_{\alpha +1}$ and choose $s_{\alpha +1}$ large enough
as in (1) for $t_{\alpha +1}$.  For limit ordinals $\alpha$ choose
$t_\alpha > \max_{i < \alpha}(t_i)$ and larger than $t'_\alpha$ (there
exists such $t_\alpha$ since $\cf(\bar{\bold a}^1) = \lambda >
\alpha$) and choose $s_\alpha$ large enough as in (1) for $t_\alpha$.
The construction is thus completed.

Now $\langle \bar a^1_{t_\alpha}:\alpha \in \lambda\rangle$ is
unbounded since $t_\alpha > t'_\alpha$ for every $\alpha \in \lambda$
and $\langle t'_\alpha:\alpha \in \lambda\rangle$ is unbounded.
Assume by contradiction that $\langle s_\alpha:\alpha \in
\lambda\rangle$ is bounded in $I^2$.  Then for some $s \in I^2$ we
have that $s$ is large enough as in (1) for every $t \in I^1$.  This
contradicts (2).  Then $\langle s_\alpha:\alpha \in \lambda\rangle$ is
unbounded and so $\cf(\bar{\bold a}^2) \le \cf(\bar{\bold a}^1)$ by
symmetry we are done.
\end{proof}
\newpage

\section {Cuts in Real Closed Fields}

In this section we explore cuts in real closed fields, more specifically
from the model theoretic point of view.  We define dependency of cuts and
review several equivalent definitions and results.  The last result in
this section (Claim \ref{39}) states that two cuts in a real closed field
$\cF$ are dependent if some polynomial $p(x,y) \ne 0$ with
coefficients from $\cF$ has a sequence of roots $\langle (a_t,b_t):t
\in I\rangle$ with $\langle a_t:t \in I\rangle,\langle b_t:t \in
I\rangle$ inducing the two cuts (for the definition of sequences
inducing cuts, see definition \ref{40}).  We will use this result in
subsequence sections.

\begin{definition}
\label{26}
1) Let $\cF$ be a real clsoed field.  A cut is a pair $C = (C^-,C^+)$
   such that $C^-,C^+ \subseteq \cF,\cC^-$ an initial segment of
   $\cF,C^+$ an end segment of $\cF$ and $C^- \cup C^+ = \cF,C^- \cap
   C^+ = \phi$.

\noindent
2) A cut $C = (C^+,C^-)$ will be called Dedekind if $C^-$ has no
   maximal element and $C^+$ has no minimal element.

\noindent
3) We define the cofinality of the cut $C$ to be the pair
   $(\cf(C^-),\cf(C^{+,*}))$ where $C^{+,*}$ is $C^+$ going
   backwards.  $\cf(C^-)$ will be called the left cofinality of $C$
   and right cofinality is defined in the same way.
\end{definition}

\begin{definition}
\label{27}
Let $\cF$ be a real closed field and let $C = (C^-,C^+)$ be a cut in
it.  Let $\cK$ be a real closed field extending $\cF$.  We say that $a
\in \cK$ realizes the cut $C$ if $c < a$ for every $c \in C^-$ and $a
< d$ for every $d \in C^+$. In this case we say that $\cK$ has
realization of $C$.
\end{definition}

\begin{definition}
\label{28}
Let $\cF$ be a real closed field and let $S$ be a family of cuts in
$\cF$.  We say that $S$ is dependent if for some real closed field
extending $\cF$ with realizations $\{a_s:s \in S\}$ of the cuts in $S$
we have that $\{a_s:s \in S\}$ is algebraically dependent over $\cF$,
i.e. there exists some $n < \omega$ and 
polynomial $p(\bar x) \ne 0$ in $n$ variables
with parameters from $\cF$ such that some sequence of length $n$ of
elements in $\{a_s:s \in S\}$ is a solution to $p(\bar x) = 0$.
Otherwise, the set is said to be independent.

The reader can easily check that this definition is sound.
\end{definition}

\begin{claim}
\label{29}
Let $\cF$ be a real closed field and $S$ a set of cuts in $\cF$.  
Then $S$ is independent iff whenever $D \subseteq S,\cK \ge \cF$ and
$\{a_d:d \in D\} \subseteq \cK$ realize the cuts in $D$ \then \, the
real closure of $\cF(\{a_d:d \in D\})$ does not realize any cut from
$S \backslash D$.
\end{claim}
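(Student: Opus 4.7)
The plan is to prove the two implications separately, using the soundness of algebraic independence stated after Definition \ref{28} (so we can freely extend $\cK$ to accommodate additional realizations without changing the status of ``dependent'' vs.\ ``independent'').

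For the forward direction ($\Rightarrow$), I would argue by contraposition. Assume the right-hand side fails: there exist $D \subseteq S$, a real closed extension $\cK \ge \cF$, realizations $\{a_d : d \in D\} \subseteq \cK$ of the cuts in $D$, some $C \in S \setminus D$, and some $e \in \cK$ in the real closure of $\cF(\{a_d : d \in D\})$ realizing $C$. Since $e$ is algebraic over $\cF(\{a_d : d \in D\})$, there is a finite subset $\{d_1,\dotsc,d_n\} \subseteq D$ and a nonzero polynomial $p$ over $\cF$ such that $p(e, a_{d_1}, \dotsc, a_{d_n}) = 0$. Now extend $\cK$ further to realize all cuts in $S \setminus (D \cup \{C\})$ and set $a_C := e$. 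The resulting $\{a_s : s \in S\}$ realizes all cuts in $S$ but satisfies a nontrivial polynomial relation over $\cF$, witnessing that $S$ is dependent.

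For the reverse direction ($\Leftarrow$), I would again argue by contraposition. Assume $S$ is dependent, so fix realizations $\{a_s : s \in S\}$ in some $\cK \ge \cF$ and a nonzero polynomial $p \in \cF[x_1,\dotsc,x_n]$ with $p(a_{s_1},\dotsc,a_{s_n}) = 0$ for distinct $s_1,\dotsc,s_n \in S$. Choose such a witness with $n$ minimal. By minimality, $\{a_{s_1},\dotsc,a_{s_{n-1}}\}$ is algebraically independent over $\cF$. Viewing $p$ as a polynomial in $x_n$ with coefficients in $\cF[x_1,\dotsc,x_{n-1}]$, at least one coefficient is a nonzero polynomial, and algebraic independence forces its evaluation at $(a_{s_1},\dotsc,a_{s_{n-1}})$ to be nonzero. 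Hence $a_{s_n}$ is algebraic over $\cF(a_{s_1},\dotsc,a_{s_{n-1}})$. Since $\cK$ (or, \wilog, its real closure) is real closed and the algebraic elements of a real closed field over a subfield form a real closure of that subfield, $a_{s_n}$ lies in the real closure of $\cF(\{a_d : d \in D\})$ for $D := \{s_1,\dotsc,s_{n-1}\}$, while $s_n \in S \setminus D$. This violates the right-hand side.

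The main obstacle is ensuring the minimality argument is clean: one must verify that when a coefficient polynomial is nonzero in $\cF[x_1,\dotsc,x_{n-1}]$, its evaluation at algebraically independent elements is nonzero (so that $p(a_{s_1},\dotsc,a_{s_{n-1}}, x_n)$ is genuinely a nonzero polynomial in $x_n$), and correspondingly that an element of $\cK$ algebraic over a subfield $F \subseteq \cK$ indeed lies in the real closure of $F$ inside $\cK$. Both are standard but warrant explicit mention. The soundness remark following Definition \ref{28} handles the freedom to extend $\cK$ in the $(\Rightarrow)$ argument without loss.
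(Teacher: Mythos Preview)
Your proof is correct and follows essentially the same route as the paper's: both directions proceed by contraposition, the forward direction extracts a polynomial from the algebraicity of the realized cut, and the reverse direction isolates one variable as algebraic over the others. Your version is simply more explicit where the paper is terse---the paper writes ``\wilog\ $p(x,d_2,\dotsc,d_n)\ne 0$'' for what you justify via the minimality argument, and the paper omits the step of extending $\cK$ to realize the remaining cuts, which you handle via the soundness remark after Definition~\ref{28}.
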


\begin{proof}
First assume that $S$ is independent and let $D$  be a subset of $S$.
Take some $\cK$ a real closed field extending $\cF$ with realizations
$\{a_d:d \in D\}$ to the cuts in $D$.  We claim that the real closure
of $\cF(\{a_d:d \in D\})$ realizes no type in $S \backslash D$.  Assume
otherwise, then some cut $C$ in $S \backslash D$ is realized.  Hence
some polynomial $p(x)$ with parameters from $\cF(\{a_d:d \in D\})$
witness it.  Now $p(x)$ can be rewritten as $p(x,\bar d)$ with
parameters from $\cF$, where $\bar d$ is a sequence from $\{a_d:d \in
D\}$.  So $p(\bar x,\bar y)$ witness that $S$ is algebraically
dependent over $\cF$.  Contradiction.

For the second direction assume $S$ is dependent.  Then there exists
some polynomial $p(x) \ne 0$ with parameters from $\cF$ such that some
finite sequence from $\{a_s:s \in S\}$ solves $p(\bar x)=0$, assume
$p(\bar d) = 0,d = \langle d_1,\dotsc,d_n\rangle$ and $d_i$ realize
the cut $D_i$ in $S$ for $i=1,\dotsc,n$.  Without loss of generality
$p(x,d_2,\dots,d_n) \ne 0$ and so $D_1$ is realized in the real
closure of $\cF(d_2,\dotsc,d_n)$ and we are done.
\end{proof}

\begin{definition}
\label{30}
Let $C = (C^-,C^+)$ be a Dedekind cut in some real closed field $\cF$.

\noindent
1) $C$ is said to be positive if $C^- \cap \cF^+ \ne \phi$.

\noindent
2) A positive cut $C$ is said to be additive if $C^-$ is closed under
   addition.

\noindent
3) $C$ is said to be multiplicative if $C^- \cap F^+$ is closed under
   multiplication and $2 \in C^-$.

\noindent
4) $C$ is said to be a Scott cut if for every $t > 0$ in $\cF$ we have
   some $a \in C^-,b \in C^+$ with $b - a<t$.
\end{definition}

\begin{claim}
\label{31}
Let $\{C_1,C_2\}$ be a set of Dedekind cuts in some real closed field
$\cF$.  So the following are equivalent:
\mn
\begin{enumerate}
\item  $\{C_1,C_2\}$ is dependent
\sn
\item  for some polynomial $p(x,y,c)$ with $\bar c,d \subset \cF$ we have
that $\varphi(x,y,\bar c,d) = ``y$ is the smallest set to realize $y >
d$ and $p(x,y,\bar c) = 0"$ defines a strictly monotonic function $y
  = f(x)$ from some interval $I_1$ around $C_1$ onto some interval
  $I_2$ around $C_2$, such that the cut is respected, i.e. $f(I_1 \cap
  C_1) = I_2 \cap C^-_2$ or $f(I_1 \cap C^-_1) = I_2 \cap C^+_2$
\sn
\item  some $\cF$-definable function $\varphi(x,y,\bar c)$
  monotonically maps some interval around $C_1$ onto some interval
  around $C_2$ such that the cut is respected.
\end{enumerate}
\end{claim}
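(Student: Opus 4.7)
The plan is to establish the cycle $(2) \Rightarrow (3) \Rightarrow (1) \Rightarrow (2)$. The implication $(2) \Rightarrow (3)$ is immediate, as (2) exhibits a specific instance of the $\cF$-definable function asserted in (3).

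For $(3) \Rightarrow (1)$, I would pass to a real closed extension $\cK \supseteq \cF$ containing a realizer $a_1$ of $C_1$. By Theorem~\ref{6}, the graph of $f$ is semi-algebraic over $\cF$, so some nonzero polynomial $p(x,y)$ with coefficients in $\cF$ vanishes on that graph (restricted to the interval around $C_1$). Defining $a_2 := f(a_1)$ via the same formula interpreted in $\cK$, the monotonicity of $f$ together with cut-respect forces $a_2$ to realize $C_2$, and $p(a_1, a_2) = 0$ exhibits the algebraic dependence required in (1).

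The main work is $(1) \Rightarrow (2)$. Take realizers $a_1, a_2$ of $C_1, C_2$ in a real closed extension $\cK$, with $p(a_1, a_2) = 0$ for some nonzero polynomial $p(x,y,\bar c)$ over $\cF$ chosen of minimal $y$-degree; by minimality $\partial p/\partial y(a_1, a_2) \ne 0$, so $a_2$ is a simple root of $p(a_1, y)$. No root of $p(a_1, y)$ lies in $\cF$ (otherwise $a_1$ would be algebraic over the real closed $\cF$, hence in $\cF$, contradicting Dedekindness of $C_1$), so every real root realizes some cut over $\cF$; the real roots realizing $C_2$ form a consecutive block, and any real root strictly below this block is separated from $a_2$ by an $\cF$-element. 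Pick $d \in \cF \cap C_2^-$ strictly greater than every such lower root, and set
\[
\varphi(x,y,\bar c, d) \; := \; p(x,y,\bar c) = 0 \,\wedge\, y > d \,\wedge\, \forall z \bigl( d < z < y \to p(x,z,\bar c) \ne 0 \bigr),
\]
so $\varphi$ picks out the smallest root of $p(x,\cdot,\bar c)$ above $d$; by construction $f(a_1)$ is then a realizer of $C_2$.

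To finish, I would apply Corollary~\ref{9} to see that $f$ is piecewise monotonic on $\cF$. The piece surrounding $C_1$ cannot be constant, for by model completeness of RCF a constant value $v \in \cF$ would extend to $\cK$ and force $f(a_1) = v \in \cF$, contradicting $f(a_1)$ being a realizer of $C_2$. Hence $f$ is strictly monotonic on some $\cF$-interval $I_1$ around $C_1$ with image an interval $I_2$ around $C_2$; monotonicity sends $I_1 \cap C_1^-$ into $\cF$-elements below the realizer $f(a_1)$, hence into $C_2^-$, and symmetrically for $I_1 \cap C_1^+$. The main obstacle is the $(1) \Rightarrow (2)$ selection of $d \in \cF$ isolating a $C_2$-realizing root among the finitely many real roots of $p(a_1, y)$; this rests on the observation that two roots realizing different cuts over $\cF$ are always separable by an element of $\cF$, while roots realizing the same cut $C_2$ form a contiguous block within which any choice of smallest root still realizes $C_2$.
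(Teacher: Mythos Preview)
Your argument is correct and follows essentially the same route as the paper: the same cycle $(2)\Rightarrow(3)\Rightarrow(1)\Rightarrow(2)$, and for $(1)\Rightarrow(2)$ the same key move of selecting $d\in C_2^-$ above the ``low'' roots of $p(a_1,y)$ so that the formula ``smallest root above $d$'' defines a function sending $a_1$ to a realizer of $C_2$, after which piecewise monotonicity from quantifier elimination finishes. Two minor remarks: your minimal-$y$-degree and simple-root observations are correct but never used downstream, and your claim that no root of $p(a_1,y)$ lies in $\cF$ tacitly uses that minimality (to rule out $(y-r)\mid p$); the paper sidesteps this by directly taking $b$ to be the least root above $C_2^-$ and $d\in C_2^-$ above the finitely many remaining roots, which is slightly cleaner. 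For $(3)\Rightarrow(1)$ you produce an explicit polynomial from the semi-algebraic graph, whereas the paper simply notes that $f(c_1)$ realizes $C_2$ and invokes the characterization in Claim~\ref{29}; both are fine.
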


\begin{proof}
$(1) \Rightarrow (2)$  First assume 
$\{C_1,C_2\}$ is dependent.  By definition for some $a,a^*$ realizations
of $C_1,C_2$ respectively in some real closed field $\cF^+$ extending
$\cF$, and for some polynomial $p(x,y_1) \ne 0$ we have that
$\models p(a,a^*) = 0$.  Now let $b \in \cF^+$ be the smallest root
of $p(a,y)$ in $\cF^+$ that is larger than $C^-_2$ and let $d \in
C^-_2$ be some element smaller than $b$ and larger than all the
smaller roots of $p(a,y)$ in $\cF^+$.  Now take $\varphi(x,y) = ``y$ is
the smallest element to realize $y > d \wedge p(x,y) = 0"$.

Now surely $\models \varphi[a,b]$ and by Corollary \ref{8} we have that
$\varphi(x,y)$ defines $y$ as a function of $x$.  By quantifier
elimination the function is monotonic and the cut is respected as
$\models \varphi[a,b]$.

(3) follows from (2) trivially.

$(3) \Rightarrow (1)$  Now assume a function 
such as in (3) exists, and denote it $f(x)=y$.
Let $c_1$ be some realization of $C_1$ in some real closed field
extending $\cF$.  Now $\cF \models ``\varphi(x,y,\bar c)$ defines a
monotonic function on $(d^-,d^+)"$ where $d \in C_1,d^+ \in C^+_1$.
Hence $c_1$ has an image under $f(x)$, denoted $f(c_1)$.  Without
loss of generality the function is strictly increasing in
$(d^-,d^+)$.  So for every $a \in f(d^-,d^+) \cap C^-_2$ we have that
$f(x) > a$ on some end-segment of $C_1$ and so $f(c_1) > a$ as the
function is monotonic.  However, for every $b \in f(d^-,d^+) \cap
C_2$ by similar considerations we have that $f(c_1) < b$.  So $f(c_1)$
realize the cut $C_2$ and so every field extending $\cF$ realizing
$C_1$ also realize $C_2$ and so the cuts are dependent.
\end{proof}

\begin{definition}
\label{32}
Two Dedekind cuts $C_1,C_2$ in some real closed field $\cF$ will be
called equivalent if $\{C_1,C_2\}$ is dependent.  In this case we will
say that $C_i$ is equivalent to $C_{3-i}$ for $i=1,2$.  They are
positively equivalent if there exists some $\cF$-definable
order-preserving function from some interval about $C_1$ to some
interval about $C_2$ respecting the cuts and negatively equivalent if
there exists such an anti-order-preserving function.
\end{definition}

\begin{remark}
\label{33}
Every pair of equivalent cuts is either positively equivalent, negatively
equivalent or both.
\end{remark}

\begin{remark}
\label{34}
Positive equivalence is a transitive relation between Dedekind cuts
(the proof is immediate from the last claim).
\end{remark}

\noindent
The following claim strengthens the previous claim:
\begin{corollary}
\label{35}
Let $\{C_1,C_2\}$ be a set of Dedekind cuts in some real closed field
$\cF$.  So $C_1$ is positively equivalent to $C_2$ iff there exists
some $\cF$-definable function monotonically mapping some end-segment
of $C^-_1$ onto some end-segment of $C^-_2$.
\end{corollary}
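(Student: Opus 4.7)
The forward direction is by restriction. Suppose $C_1$ is positively equivalent to $C_2$: by Definition~\ref{32} there is an $\cF$-definable order-preserving function $f \colon I_1 \to I_2$ between intervals $I_1$ around $C_1$ and $I_2$ around $C_2$, with $f(I_1 \cap C_1^-) = I_2 \cap C_2^-$. Writing $I_1 = (a_1, b_1)$ with $a_1 \in C_1^-$ and $b_1 \in C_1^+$, the set $I_1 \cap C_1^- = \{x \in C_1^- : x > a_1\}$ is an end-segment of $C_1^-$, and its $f$-image $I_2 \cap C_2^-$ is analogously an end-segment of $C_2^-$. Thus $f$ restricted to $I_1 \cap C_1^-$ is the desired monotonic map.

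For the converse, suppose an $\cF$-definable function $f$ monotonically maps an end-segment $J$ of $C_1^-$ onto an end-segment $K$ of $C_2^-$. The plan is to extend the piece of monotonicity of $f$ across $C_1$ using Corollary~\ref{9}, and then verify the cut-respecting condition of Definition~\ref{32}. By Corollary~\ref{9} the monotonicity piece of $f$ containing points of $C_1^-$ arbitrarily close to $C_1$ is some open interval $(a, b)$. Since $f(J) = K$ has more than one element, this piece is not constant, so $f$ is strictly monotonic on $(a, b)$, and after shrinking we may assume $J \subseteq (a, b)$. Cofinality of $J$ in $C_1^-$ forces $b \notin C_1^-$ (otherwise $J \subseteq (a, b)$ would be bounded above in $\cF$ by $b$, contradicting $\sup J = C_1$ in the Dedekind sense), so $b \in C_1^+$ or $b = +\infty$; in particular $(a, b)$ is an interval about $C_1$.

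It remains to show that $f$ is strictly increasing (not decreasing) on $(a, b)$, after which the cut-respecting property follows. If $f$ were strictly decreasing, then $C_2 = \sup K = \sup f(J) = \lim_{x \to \inf J^+} f(x)$; but one-sided limits of $\cF$-definable functions, whether at interior points of $\cF$ or at endpoints of a monotonicity piece, lie in $\cF \cup \{\pm\infty\}$ and so cannot equal a nontrivial Dedekind cut like $C_2$. Therefore $f$ is strictly increasing on $(a, b)$, and by the continuity of such definable monotonic functions in real closed fields (from quantifier elimination plus the intermediate value property), $f((a, b))$ is an interval of $\cF$. Since $f(J) = K$ is cofinal in $C_2^-$, we obtain $\lim_{x \to C_1^-} f(x) = C_2$, and for $x \in (C_1, b) \subseteq C_1^+$ strict monotonicity forces $f(x)$ to exceed every element of $K$, so $f(x) \in C_2^+$. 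Hence $f((a, b) \cap C_1^-) \subseteq C_2^-$ and $f((a, b) \cap C_1^+) \subseteq C_2^+$, i.e.\ the cut is respected, and $f((a, b))$ is an interval about $C_2$. By Definition~\ref{32}, $C_1$ is positively equivalent to $C_2$. The main obstacle is excluding the strictly decreasing case and verifying that the extension respects the cut; both depend on standard properties of $\cF$-definable functions in real closed fields.
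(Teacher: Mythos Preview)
Your proof is correct and follows essentially the same strategy as the paper: extend the monotone behaviour of $f$ across the cut $C_1$ using definability in real closed fields (you via Corollary~\ref{9} on piecewise monotonicity, the paper via the explicit formula $\psi(x)\equiv$ ``$f$ is strictly increasing on some open interval around $x$'' together with quantifier elimination), and then appeal to Claim~\ref{31}/Definition~\ref{32}. You are in fact more careful than the paper in two places: you explicitly exclude the strictly decreasing case by the limit argument (the paper simply writes ``\wilog\ strictly increasing''), and you explicitly verify that the extended function respects the cut and lands in an interval about $C_2$ (the paper leaves this implicit in ``now use the previous claim'').
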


\begin{proof}
The first direction follows from the Claim \ref{31}.

For the second direction let $\varphi(x,y,\bar c)$ be such a function
and \wilog \, assume $\varphi(x,y,\bar c)$ is strictly increasing on
$D^-$ an end segment of $C^-_1$.  Now examine the formula $\psi(x,\bar
c) \equiv ``\varphi(x,y,\bar c)$ defines $y$ as a strictly increasing
function of $x$ on some interval $x-\varepsilon,x +\varepsilon)"$.  By
quantifier elimination $\{x \in \cF :\models \psi(x,\bar c)\}$ is a
finite union of intervals and is non-empty on some end-segment of $C^-$,
hence is non-empty on some $(d^-,d^+)$ with $d^- \in C^-,d^+ \in C^+$.. Now
use the previous claim.
\end{proof}

\noindent
In dependent theories, the average type (see Definition \ref{11}) was
defined for indiscernible sequences.  The reason is that for general
sequences there may exist a formula $\varphi(x,\bar c)$ such that its
truth value shifts back and forth and the average will not be defined,
resulting in incomplete types.  In the theory of real closed fields,
by quantifier elimination, it is enough for the sequence to be endless
strictly increasing (or decreasing) for the average type to be complete.
\begin{definition}
\label{36}
Let $\cF$ be some real closed field and let $\bar{\bold c} = \langle
c_t:t \in I\rangle$ be some endless increasing sequence in $\cF$.  For
every $\Delta \subseteq \bbL(T)$ we define the $\Delta$-average type
of $\bar{\bold c}$ over $A \subset C$ to be:

\begin{equation*}
\begin{array}{clcr}
\Av_\Delta(A,\bar{\bold c}) = \{\varphi(x,\bar d)^{\bold t}:&\bar d
\subseteq A,\varphi(x,\bar Y) \in \Delta, \\
 &\text{ for every large enough } t \in I \text{ we have } \models
\varphi(c_t,\bar d)^{\bold t}\}
\end{array}
\end{equation*}

\mn
omitting the $\Delta$ means for $\Delta = \bbL(T)$.
\end{definition}

\begin{corollary}
\label{37}
Let $\cF$ be some real closed field and let $\bar{\bold c} = \langle
c_t:t \in I\rangle$ be some endless increasing sequence in $\cF$.  So
for every $A \subset \cC$ we have that $\Av(\bar{\bold c},A)$ is a
complete type over $A$.
\end{corollary}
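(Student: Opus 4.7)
The plan is to combine the quantifier-elimination statement of Corollary \ref{8} with the monotonicity of $\bar{\bold c}$. For $\Av(A,\bar{\bold c})$ to be a complete type over $A$, we need to show that for every formula $\varphi(x,\bar y)$ and every tuple $\bar d \subset A$ with $\ell g(\bar d) = \ell g(\bar y)$, exactly one of $\varphi(x,\bar d)$ or $\neg\varphi(x,\bar d)$ belongs to $\Av(A,\bar{\bold c})$; equivalently, the truth value of $\varphi(c_t,\bar d)$ is constant on some end-segment of $I$.

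Fix such a $\varphi$ and $\bar d$. Applying Corollary \ref{8} inside the monster model $\cC$ to the formula $\varphi(x,\bar d)$, the definable set
\[
X \;=\; \{x \in \cC : \models \varphi[x,\bar d]\}
\]
is a finite union of intervals of $\cC$; let $p_1 < \ldots < p_k$ enumerate its (finitely many) boundary points in $\cC$. Since $\bar{\bold c} = \langle c_t : t \in I \rangle$ is strictly increasing, the map $t \mapsto c_t$ crosses each $p_j$ at most once as $t$ runs through $I$, so the truth value of the statement ``$c_t \in X$'' changes at most $k$ times. Hence there is some $t_0 \in I$ (which exists because $I$ is endless) such that this truth value is constant on $\{t \in I : t \ge t_0\}$; call this constant value $\bold t$.

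By definition of $\Av(A,\bar{\bold c})$ this gives $\varphi(x,\bar d)^{\bold t} \in \Av(A,\bar{\bold c})$ and $\varphi(x,\bar d)^{1-\bold t} \notin \Av(A,\bar{\bold c})$. Since $\varphi$ and $\bar d \subseteq A$ were arbitrary, every $\bbL(T)$-formula over $A$ is decided, so $\Av(A,\bar{\bold c})$ is a complete type over $A$. There is no real obstacle here beyond invoking quantifier elimination through Corollary \ref{8}; the role of Definition \ref{4} / Claim \ref{5}, which in the general dependent setting forced one to assume indiscernibility of $\bar{\bold a}$, is taken over in the real-closed-field setting by the uniform finite-union-of-intervals bound together with the monotonicity of $\bar{\bold c}$.
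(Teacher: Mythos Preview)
Your proof is correct and follows essentially the same route as the paper: invoke quantifier elimination (Corollary \ref{8}) to see that the solution set of $\varphi(x,\bar d)$ is a finite union of intervals, and then use that a monotone sequence eventually stays inside or outside any such finite union. Your version spells out the mechanism more explicitly (enumerating boundary points and counting crossings), while the paper states the conclusion in one line, but the argument is the same.
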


\begin{proof}
Let $\varphi(x,\bar y) \in \bbL$ be some formula and $\bar d \in A$.
By quantifier elimination we have that $\{x \in \cF : \models
\varphi(x,\bar d)\}$ is a finite union of intervals in $\cF$.  Hence
for some truth value $\bold t$ for some end-segment $J$ of $I$ we have
that $\forall t \in J(\models \varphi(c_t,\bar d)^{\bold t})$.  So we
are done.
\end{proof}

\noindent
We leave the proof of the following claim to the reader.
\begin{claim}
\label{38}
Let $C$ be a Dedekind cut in some real closed field $\cF$ and let
$\bar{\bold c} = \langle c_t:t \in \alpha\rangle$ be some strictly
increasing unbounded sequence in $C^-$.  So $a \in \cC$ realizes $\cC$
in some real closed field extending $\cF$ iff $a$ realizes 
$\Av(\cF,\bar{\bold c})$.
\end{claim}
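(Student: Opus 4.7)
The plan is to verify both implications by leveraging the finite-union-of-intervals structure guaranteed by Corollary~\ref{8}, together with the cofinality of $\bar{\bold c}$ in $C^-$.

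For the easier direction, suppose $a$ realizes $\Av(\cF, \bar{\bold c})$ in some real closed field extending $\cF$. To show $a$ realizes $C$, I would take any $b \in C^-$ and note that since $\bar{\bold c}$ is unbounded in $C^-$, eventually $c_t > b$; hence the formula $x > b$ lies in $\Av(\cF, \bar{\bold c})$ and so $a > b$. Dually, for any $d \in C^+$, we have $c_t < d$ for all $t$ (since $c_t \in C^-$), so $x < d$ lies in the average and hence $a < d$. This shows $a$ realizes $C$.

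The more substantive direction is the converse. Assume $a$ realizes $C$ and fix $\varphi(x, \bar d)$ with $\bar d \subseteq \cF$; I want to show that the eventual truth value $\bold t$ of $\varphi(c_t, \bar d)$ along the sequence equals $\varphi(a, \bar d)$. By Corollary~\ref{8} applied in the extension (or equivalently, in $\cF$, since the parameters are from $\cF$ and the interval endpoints can be chosen from $\cF \cup \{\pm\infty\}$), the set $X = \{x : \models \varphi(x, \bar d)^{\bold t}\}$ is a finite union of intervals with endpoints in $\cF \cup \{\pm\infty\}$. Since $\bold t$ is the eventual truth value, there is some end-segment of $\bar{\bold c}$ contained in $X$, and hence one of the finitely many intervals $(u, v)$ comprising $X$ contains $c_t$ for all large $t$.

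The key step is then to locate $u$ and $v$ relative to the cut: because $\{c_t\}$ is cofinal in $C^-$ and $c_t < v$ for cofinally many $t$, we cannot have $v \in C^-$ (otherwise some $c_t > v$, contradiction), so $v \in C^+$ or $v = +\infty$; on the other hand, $u < c_t$ for some $t \in \alpha$, and $c_t \in C^-$, so $u \in C^-$ or $u = -\infty$. Since $a$ realizes $C$, we have $u < a < v$, whence $a \in (u,v) \subseteq X$, giving $\models \varphi(a, \bar d)^{\bold t}$ as required. The main obstacle (if one can call it that) is just this endpoint bookkeeping; once the interval containing an end-segment of $\bar{\bold c}$ is identified, the cut-realization property of $a$ places $a$ in the same interval essentially by definition.
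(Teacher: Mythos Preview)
Your proof is correct. The paper itself leaves Claim~\ref{38} to the reader (the entire proof in the paper reads ``We leave the proof of the following claim to the reader''), so there is no alternative argument to compare against; your write-up supplies precisely the sort of verification the authors had in mind, using Corollary~\ref{8} to reduce to an interval-endpoint analysis and then locating those endpoints relative to the cut.

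One minor remark on presentation: when you pass from ``some end-segment of $\bar{\bold c}$ is contained in $X$'' to ``one of the finitely many intervals contains $c_t$ for all large $t$'', it might be worth saying explicitly that this uses the monotonicity of $\bar{\bold c}$ (an increasing sequence visits each of finitely many disjoint intervals along a convex set of indices, so exactly one of those index sets is cofinal). You clearly have this in mind, but spelling it out makes the ``key step'' airtight. Also note that the endpoints $u,v$ lie in $\cF\cup\{\pm\infty\}$ because $\cF$ is real closed (so roots of the relevant polynomials already live in $\cF$); since $a\notin\cF$, the question of whether the interval is open or closed at an endpoint never arises for $a$.
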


\begin{claim}
\label{39}
Let $\{C_1,C_2\}$ be a set of Dedekind cuts in some real closed field
$\cF$.  Assume that for some polynomial $p(x,y,\bar c) \ne 0$ with
$\bar c \in \cF$ we have that there exists some unbounded increasing
sequence $\bar{\bold a} = \langle a_t:t \in I\rangle$ in $C^-_1$ and
some unbounded increasing sequence $\bar{\bold b} = \langle b_t:t \in
I\rangle$ in $C^-_2$ such that $p(a_t,b_t,\bar c) = 0$ for every $t
\in I$.  Then $C_1,C_2$ are positively equivalent.
\end{claim}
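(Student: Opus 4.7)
The plan is to extract from the polynomial $p$ an $\cF$-definable monotonic function sending an end-segment of $C_1^-$ onto an end-segment of $C_2^-$, and then invoke Corollary \ref{35}. Let $N$ be the $y$-degree of $p(x,y,\bar c)$. By Theorem \ref{6}, for each $k \in \{1,\ldots,N\}$ the partial function
\[
f_k(x) = \text{the $k$-th smallest real root (in $y$) of } p(x,y,\bar c)=0
\]
is $\cF$-definable on its natural domain $\Dom(f_k)=\{x:p(x,y,\bar c)\text{ has at least $k$ real roots in $y$}\}$. For all sufficiently large $t\in I$, $p(a_t,y,\bar c)$ is a nonzero polynomial in $y$ (otherwise every coefficient of $p$ viewed as a polynomial in $y$ would vanish at unboundedly many $a_t$, forcing $p\equiv 0$), so $b_t=f_{k(t)}(a_t)$ for some $k(t)\in\{1,\ldots,N\}$. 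Pigeonhole on the unbounded tail of $I$ yields a fixed $k$ and an unbounded $I'\subseteq I$ with $b_t=f_k(a_t)$ for every $t\in I'$.

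Next I would analyze $f_k$ on an end-segment of $C_1^-$. Its domain is a finite union of intervals by Corollary \ref{8}, and contains the cofinal subset $\{a_t:t\in I'\}$ of $C_1^-$, so it contains an end-segment $D$ of $C_1^-$. By Corollary \ref{9}, after shrinking $D$, $f_k$ is monotonic on $D$. It cannot be constant there, since $f_k(a_t)=b_t$ is unbounded in $C_2^-$; and it cannot be strictly decreasing there, since for $t_1<_I t_2$ in $I'$ with $a_{t_1},a_{t_2}\in D$ this would give $b_{t_1}=f_k(a_{t_1})>f_k(a_{t_2})=b_{t_2}$, contradicting the strict increase of $\bar{\bold b}$. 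Hence $f_k$ is strictly increasing on $D$.

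Finally, quantifier elimination implies that $f_k$ has only finitely many discontinuities, so shrinking $D$ once more I may assume $f_k$ is continuous and strictly increasing on $D$. For any $x\in D$, choosing $t\in I'$ with $a_t>x$ (possible since $\{a_t:t\in I'\}$ is cofinal in $C_1^-$) gives $f_k(x)<f_k(a_t)=b_t\in C_2^-$, so $f_k(D)\subseteq C_2^-$; and $f_k(D)$ contains the cofinal set $\{b_t:t\in I',\,a_t\in D\}$, hence is cofinal in $C_2^-$. The intermediate value property for continuous strictly increasing definable functions then forces $f_k(D)$ to be an interval of $\cF$; an interval contained in and cofinal in $C_2^-$ is precisely an end-segment of $C_2^-$. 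Corollary \ref{35} then delivers positive equivalence of $C_1$ and $C_2$.

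The main obstacle is the last step: turning ``$f_k(D)\subseteq C_2^-$ and cofinal in $C_2^-$'' into ``$f_k(D)$ is literally an end-segment of $C_2^-$''. This requires continuity of definable monotonic functions in $\cF$, a consequence of quantifier elimination that is not spelled out among the stated preliminaries but is standard for o-minimal structures. Once that ingredient is in hand, everything else is bookkeeping on top of Corollaries \ref{8}, \ref{9}, and \ref{35}.
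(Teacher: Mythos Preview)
Your argument is correct, and the overall strategy matches the paper's: extract from $p$ a definable strictly increasing function on an end-segment of $C_1^-$ whose image is an end-segment of $C_2^-$, then invoke Corollary~\ref{35}. The execution, however, differs in two places.

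First, to single out a root function. You take the global ``$k$-th smallest root'' and use pigeonhole on the finite set $\{1,\dotsc,N\}$ to fix $k$ along a cofinal $I'\subseteq I$. The paper instead passes to a realization $a$ of $C_1$ in an extension, reads off the number $n$ of roots of $p(a,y,\bar c)$ lying above an end-segment of $C_2^-$, fixes a threshold $e\in C_2^-$, and uses ``the least root above $e$'' on an end-segment $D_1\subseteq C_1^-$ where this root-count is constantly $n$. Your route is a bit more elementary: it stays inside $\cF$ and avoids the detour through a realization.

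Second, to get the image to be an end-segment of $C_2^-$. You shrink $D$ so that $f_k$ is continuous (finitely many discontinuities, an o-minimality fact you correctly flag as unstated in the preliminaries) and then use the intermediate value property to conclude $f_k(D)$ is convex; together with $f_k(D)\subseteq C_2^-$ and cofinality this gives an end-segment. The paper argues instead by the root-count: if some $b\in C_2^-$ lay above the whole image, then $p(x,y,\bar c)=0$ would have a root in $(e,b)$ for every $x$ in a tail of $C_1^-$, leaving only $n-1$ roots above $b$, contradicting the choice of $n$ via the realization $a$. The paper's counting device thus sidesteps any explicit appeal to continuity, at the cost of introducing the realization; your IVT step is cleaner once one is willing to cite the standard continuity of definable monotone functions in real closed fields.
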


\begin{proof}
Without loss of generality we can assume that there exists no $a \in
C^-_1$ such that $\forall b(p(a,b,\bar c) =0)$ (there can be only a
finite number of such $a$'s).  Let $a$ realize $C_1$ in some real
closed field extending $\cF$.  So by quantifier elimination, for some
end-segment $D_2$ of $C^-_2$ and $n \in \omega$ we have that $(\forall
z \in D_2)(p(a,y,\bar c) = 0$ has exactly $n$ solutions bigger than
$z$).  Now for every $b \in C^-_2$ for every large enough $s \in I$ we
have $b_s > b$, so for every large enough $t \in I$ we have $\exists
y(p(a_t,y,\bar c) = 0,y > b) \in \Av(\cF,\bar{\bold a})$.  So $n >
0$.  \Wilog \, for some $e \in D_2$ we have $\forall s \in I(b_s >
e)$.  Again \wilog \, we may assume that for every $t \in I$ there are
exactly $n$ solutions to $p(a_t,y,\bar c) = 0$ bigger than $e$ and by
quantifier elimination we have that for some end-segment $D_1$ of
$C^-_1$ we have that $\forall x \in D_1(p(x,y,\bar c)=0$ has exactly
$n$ solutions bigger than $e$).

Let $\varphi(x,y,\bar c) = ``y$ is the smallest solution to
$p(x,y,\bar c) = 0$ bigger than $e$.  So $\varphi(x,y,\bar c)$ defines
$y$ as a function of $x$ for $x \in D_1$.  \Wilog \, $\varphi(x,y,\bar
c)$ is strictly monotonic on $D_1$ and $\bar{\bold a} \subset D_1$.
Assume by contradiction that $\varphi(x,y,\bar c)$ is strictly
decreasing and let $d \in D_1$ and $\models \varphi[d,b,\bar c]$.  So
for every $x \in D_1$ bigger than $d$ we have that $p(x,y,\bar c)=0$
has a solution in $(e,b)$.  Hence for every large enough $t \in I$ we
have that $p(a_t,y,\bar c)=0$ has at most $n-1$ solutions bigger than
$b$.  So $p(a,y,\bar c)=0$ has at most $n-1$ solutions bigger than
$b$, contradicting the choice of $D_2$.  So $\varphi(x,y,\bar c)$ is
strictly increasing on $D_1$.

Let $C'_2$ be the downward closure in $\cF$ of the image of $D_1$
under the function $\varphi$, so $C'_2$ is an initial segment with no endpoint.
If we show that $C'_2 = C^-_2$ then we are done by Claim \ref{35}.  We
first show that $C'_2 \subseteq C^-_2$.  For every $g \in D_1$ there
exists some $t \in I$ with $a_t > g$.  By assumption $p(a_t,b_t,\bar
c)=0$ so the image of $a_t$ under $\varphi(x,y,\bar c)$ is in $C^-_2$.
Since $\varphi$ is monotonically increasing then the image of $g$ is
in $C^-_2$, too.  So the image of $D_1$ is a subset of $C^-_2$ hence
so is $C'_2$.  Now assume that $C'_2 \ne C^-_2$.  So for some $b \in
C^-_2$ we have $b > C'_2$.   So for every large enough $x \in C^-_1$
we have that $p(x,y,\bar c) = 0$ has a solution in $(e,b)$ and as
before we get a contradiction to the choice of $D_2$.
\end{proof}
\newpage

\section {Perpendicularity in real closed fields}

In this section we explore the meaning of perpendicular sequences in real
closed fields.  We prove that every cut with large enough cofinality
is induced by an indiscernible sequence.  We show that independence of
cuts in the sense of Definition \ref{28}
is equivalent to perpendicularity of their sequences (see Theorem
\ref{42} and Claim \ref{46}).

\begin{definition}
\label{40}
Let $\bar{\bold a} = \langle a_t:t \in I\rangle$ be some strictly
increasing indiscernible sequence.  The cut 
induced by $\bar{\bold a}$ in $\cF$ is the cut
$(C^-,C^+)$ with $C^-$ the downwards closure of $\bar{\bold a}$ in $\cF$.
\end{definition}

\begin{claim}
\label{41}
Let $(C^-,C^+)$ be a Dedekind cut in some real closed field $\cF$ with
left cofinality $\lambda > \aleph_0$.  Then there exists some
indiscernible sequence $\bar{\bold a} \subset \cF$ with
$\cf(\bar{\bold a}) = \lambda$ such that $C$ is induced by $\bar{\bold a}$.
\end{claim}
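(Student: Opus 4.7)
The plan is to start with a strictly increasing sequence $\bar c = \langle c_\alpha : \alpha < \lambda \rangle$ in $C^-$ that is cofinal in $C^-$ (which exists because the left cofinality of $C$ is $\lambda$), and to extract from $\bar c$ an indiscernible subsequence of length $\lambda$ by transfinite recursion. Quantifier elimination in the theory of real closed fields (so that each formula in one variable has only finitely many sign changes) together with the regularity of $\lambda > \aleph_0$ (so that only ${<}\lambda$ many thresholds need to be collected at each stage) are the essential ingredients.

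The key tool is an eventual-constancy lemma: for every formula $\psi(x_1,\ldots,x_n)$ with parameters from the monster model there exist a truth value $\tau_\psi$ and a function $f_\psi:[\lambda]^{<n}\to\lambda$ such that $\psi(c_{\alpha_1},\ldots,c_{\alpha_n})=\tau_\psi$ whenever $\alpha_1\ge f_\psi(\emptyset)$ and $\alpha_{i+1}\ge f_\psi(\alpha_1,\ldots,\alpha_i)$ for $i<n$. The lemma is proved by induction on $n$. The case $n=1$ is immediate: $\psi(y)$ defines a finite union of intervals, and $c_\beta$ is eventually in the single interval whose interior meets the cut determined by $\bar c$. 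For $n>1$, fix $\bar\alpha=(\alpha_1,\ldots,\alpha_{n-1})$; the $n=1$ case applied to the one-variable formula $\psi(c_{\bar\alpha},y)$ shows that $\psi(c_{\bar\alpha},c_\beta)$ eventually equals $\psi(c_{\bar\alpha},\tilde c)$, where $\tilde c$ is an element of a suitable elementary extension of $\cF$ realising the cut determined by $\bar c$; applying the inductive hypothesis to $\psi'(\bar x):=\psi(\bar x,\tilde c)$, which is of arity $n-1$ with enlarged parameter set, supplies the threshold structure for the remaining variables.

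Given the lemma, the subsequence is built by transfinite recursion on $\xi<\lambda$: at stage $\xi$ choose $\beta_\xi<\lambda$ satisfying $\beta_\xi>\beta_\zeta$ for $\zeta<\xi$, $\beta_\xi\ge\xi$ (to keep the subsequence cofinal in $C^-$), and $\beta_\xi\ge f_\varphi(\beta_{\zeta_1},\ldots,\beta_{\zeta_k})$ for every parameter-free formula $\varphi$ of arity $k+1$ and every $(\zeta_1,\ldots,\zeta_k)\in[\xi]^k$. The total number of constraints is at most $\aleph_0\cdot|\xi|^{<\omega}<\lambda$, and each bounds $\beta_\xi$ by an ordinal strictly below $\lambda$, so by regularity of $\lambda$ their supremum is below $\lambda$ and $\beta_\xi$ exists. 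By the lemma, for every parameter-free formula $\varphi$ and every increasing tuple $(\xi_1,\ldots,\xi_k)\in[\lambda]^k$, the threshold conditions imposed at stages $\xi_1,\ldots,\xi_k$ force $\varphi(c_{\beta_{\xi_1}},\ldots,c_{\beta_{\xi_k}})=\tau_\varphi$; hence $\bar a:=\langle c_{\beta_\xi}:\xi<\lambda\rangle$ is indiscernible. The condition $\beta_\xi\ge\xi$ together with the cofinality of $\bar c$ gives that $\bar a$ is cofinal in $C^-$ with $\cf(\bar a)=\lambda$, so $\bar a$ induces $C$.

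The principal obstacle will be the inductive step of the eventual-constancy lemma: one has to state the lemma for formulas with arbitrary monster parameters so that $\tilde c$ may legally be substituted, and carefully choose $\tilde c$ so that $c_\beta$ (for large $\beta$) and $\tilde c$ land in the same cell of the o-minimal decomposition of $\{y:\psi(c_{\bar\alpha},y)\}$, which is the point where $\psi(c_{\bar\alpha},c_\beta)=\psi(c_{\bar\alpha},\tilde c)$ is justified. This may require iterating through a short chain of elementary extensions so that fresh realisations of the relevant cuts are available. Once this is arranged, the recursion itself is routine and relies only on the regularity of $\lambda>\aleph_0$.
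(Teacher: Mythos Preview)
Your proposal is correct and the argument goes through; the eventual-constancy lemma and the transfinite extraction work as you describe, and your caveat about needing a short chain of realizations $\tilde c_1,\tilde c_2,\dotsc$ (each realizing the average of $\langle c_\alpha\rangle$ over $\mathcal F$ together with the earlier $\tilde c$'s) is exactly the right fix for the inductive step.

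The paper takes a different, though closely related, route. Rather than extracting a subsequence via threshold functions, it first fixes an ultrafilter $D$ on $C^-$ concentrating on end-segments, builds an auxiliary indiscernible sequence $\bar{\bold b}=\langle b_n:n<\omega\rangle$ in the monster model with each $b_n$ realizing $\Av(\mathcal F\cup\{b_m:m<n\},D)$, and then chooses $a_t\in C^-$ for $t<\lambda$ realizing $\Av(\bar{\bold b}\cup\{a_s:s<t\},\bar{\bold b})$ with $a_t>d_t$. Indiscernibility of $\langle a_t\rangle$ then falls out of the general fact (Claim \ref{14}) that any sequence based on an indiscernible sequence is itself indiscernible; the existence of $a_t$ uses quantifier elimination (each formula in the average holds on an end-segment of $C^-$) and the fact that the parameter set has size ${<}\lambda$. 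Your chain of realizations $\tilde c_1,\tilde c_2,\dotsc$ is precisely the paper's $\bar{\bold b}$ in disguise, so the two proofs share the same engine. The trade-off: the paper's proof is shorter because it invokes the ``based on'' machinery developed in \S2, whereas your argument is more self-contained (no ultrafilters, no appeal to Claim \ref{14}) and makes the role of o-minimal cell decomposition explicit in the eventual-constancy lemma.
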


\begin{proof}
Let $\langle d_t:t \in \lambda\rangle$ be some cofinal sequence in
$C^-$.  Let $D$ be some ultrafilter on $C^-$ extending the family of
end sections of $C^-$.  We now find an endless indiscernible sequence
$\bar{\bold b} \subseteq \cC$ of order type $\omega$ based on $D$ over
$\cF$.  We now define by induction $a_t \in C^-$ for $t \in \lambda$
such that for every $t \in \lambda,a_t$ realizes $\Av(\bar{\bold b}
\cup \{a_s:s < t\},\bar{\bold b})$ and $d_t < a_t$.  If we succeed
then $\bar{\bold a} = \langle a_t:t \in \lambda\rangle$ is as required
(it is indiscernible as a sequence based on $\bar{\bold b}$).  At the
$t$-th stage, for every $\varphi(x) \in \Av(\bar{\bold b} \cup \{a_s:s
< t\},\bar{\bold b})$ there exists by definition of average some $b_s$
with $\models \varphi(b_s)$ and $s$ is bigger than all the indices of
the $b_s$'s appearing as parameters in $\varphi(x)$.  Recall that
$b_s$ realizes $\Av(C^- \cup \{b_\ell:\ell < s\},D)$ and by the choice
of $D$ there exists some unbounded $A_\varphi \subset C^-$ with each
$a \in A_\varphi$ realizing $\varphi(x),a > d_t$.  

By quantifier elimination of the theory of real closed fields, for
every model $\cT$ and formula $\chi(x)$ with parameters from $\cT$ we
have that $\{\chi(x):x \in \cT\}$ is a finite union of convex subsets
of $\cT$.  So \wilog \, we choose $A_\varphi$ such that $A_\varphi$ is
an end-section of $C^-$.  Now $|\{\bar{\bold b} \cup \{a_s:s < t\}| =
\aleph_0 < \cf(C^-)$ so there exists some $A_t \subset C^-$ an
end-section with each $a \in A_t$ realizing $\Av(\bar{\bold b} \cup
\{a_s:s < t\},\bar{\bold b})$.  Take some $a_t \in A_t$.  So the
induction is successful and we are done. 
\end{proof}

\begin{question}
%\label{}
Is this true without assuming $\lambda > \aleph_0$?
\end{question}

\begin{answer}
%\label{}
No.  Consider some $\aleph_1$-saturated real-closed field $\cS$.  Take
$C^-$ the downward closure of $\bbN \subset \cS$.  Surely it's a
Dedekind cut, and no indiscernible sequence induce it.  However, if we
restrict to finite $\Delta \subseteq \bbL(T)$ there exists a
$\Delta$-indiscernible sequence inducing the cut.  See the original proof.
\end{answer}

\begin{theorem}
\label{42}
Let $\cF$ be a real closed field, $C_1,C_2$ Dedekind cuts in $\cF$
induced by the indiscernible sequences $\bar{\bold a} = \langle a_t:t
\in I\rangle$ and $\bar{\bold b} = \langle b_s:s \in J\rangle$
respectively.  Then $C_2$ is positively equivalent to $C_1$ iff
$\bar{\bold a},\bar{\bold b}$ are not $\cF$-perpendicular in the sense
of Definition \ref{22}.
\end{theorem}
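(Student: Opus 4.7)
The plan is to prove both directions of the biconditional separately, using Corollary \ref{35} in one direction and Claim \ref{39} in the other.

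For the direction $(\Rightarrow)$, suppose $C_1, C_2$ are positively equivalent. By Corollary \ref{35} there is an $\cF$-definable order-preserving function $f$, defined by some formula $\varphi_f(x, z, \bar c)$ with $\bar c \in \cF$, mapping some end-segment of $C_1^-$ onto some end-segment of $C_2^-$. I would consider the formula $\psi(x, y, \bar c) := \exists z\,(\varphi_f(x, z, \bar c) \wedge y < z)$, expressing ``$y < f(x)$''. For sufficiently large $t$, $a_t$ lies in the domain of $f$ and $f(a_t) \in C_2^-$; by cofinality of $\bar{\bold b}$ in $C_2^-$, for $s$ large enough $b_s > f(a_t)$, so $\models \neg\psi[a_t, b_s, \bar c]$. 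Conversely, for $s$ large $b_s$ lies in the image of $f$, and since $\langle f(a_t)\rangle_t$ is cofinal in $C_2^-$ (as $\bar{\bold a}$ is cofinal in $C_1^-$ and $f$ is order-preserving), for $t$ large enough $f(a_t) > b_s$, so $\models \psi[a_t, b_s, \bar c]$. The two ``quantifier-order'' truth values disagree, witnessing failure of $\cF$-perpendicularity.

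For the direction $(\Leftarrow)$, suppose $\bar{\bold a}, \bar{\bold b}$ are not $\cF$-perpendicular. Some $\varphi(x, y, \bar c)$ with $\bar c \in \cF$ witnesses this, and by Claim \ref{20} the two per-order truth values exist and must disagree; replacing $\varphi$ by $\neg\varphi$ if necessary, I may assume (i) for every large $t$, for every large $s$, $\models \varphi[a_t, b_s, \bar c]$, and (ii) for every large $s$, for every large $t$, $\models \neg\varphi[a_t, b_s, \bar c]$. By Corollary \ref{8}, for each $x$ the set $E_x := \{y \in \cF : \varphi(x, y, \bar c)\}$ is a union of at most $N$ intervals uniformly in $x$. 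By (i), for $t$ large $E_{a_t}$ contains $b_s$ for all sufficiently large $s$, so one of its intervals $(\alpha_t, \beta_t)$ is a ``cut-containing interval'' with $\alpha_t \in C_2^-$ and $\beta_t \in C_2^+ \cup \{+\infty\}$. I claim $\alpha_t$ is cofinal in $C_2^-$: given $b \in C_2^-$, pick $s_1$ with $b_{s_1} > b$; by (ii), $b_{s_1} \notin E_{a_t}$ for $t$ large, and since $b_{s_1} < \beta_t$, this forces $b_{s_1} \le \alpha_t$, so $\alpha_t \ge b_{s_1} > b$.

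To reduce to Claim \ref{39}, I apply quantifier elimination once more: $\varphi$ is equivalent to a boolean combination of conditions $p_i(x, y, \bar c) \; * \; 0$ for finitely many polynomials $p_i$ and $* \in \{=, <, >\}$, so every endpoint of an interval of $E_x$ is a root of some $p_i(x, \cdot, \bar c)$. Each $\alpha_t$ is such a root; by indiscernibility of $\bar{\bold a}$ and the uniform bound on the number of roots, a pigeonhole-type argument lets me fix a single polynomial $p := p_{i_0}$ and a uniform choice of ``$j_0$-th root'' so that $\alpha_t$ becomes an $\cF$-definable function of $a_t$ on a cofinal end-segment of $C_1^-$. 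By Corollary \ref{9} this function is monotonic there, and cofinality of $\alpha_t$ forces it strictly increasing. Then $p(a_t, \alpha_t, \bar c) = 0$ along an unbounded increasing subsequence of $\bar{\bold a}$ paired with the corresponding unbounded increasing $\langle \alpha_t \rangle$ in $C_2^-$, and Claim \ref{39} delivers positive equivalence of $C_1, C_2$. The principal obstacle is the cofinality of $\alpha_t$ together with its first-order identification: assumption (ii) is used crucially to keep each $b_{s_1}$ strictly to the left of the cut-containing interval, while Corollary \ref{8} combined with indiscernibility of $\bar{\bold a}$ pins down a single boundary root.
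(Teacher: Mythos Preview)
Your $(\Rightarrow)$ direction is essentially the paper's argument with the witnessing formula negated: the paper uses Claim~\ref{31} to get an order-preserving $f$ defined on an interval around $C_1$ and takes $\varphi(x,y) \equiv ``y \ge f(x)"$, whereas you invoke Corollary~\ref{35} and take the negation ``$y < f(x)$''. No real difference.

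Your $(\Leftarrow)$ direction is correct but takes a genuinely different route from the paper. The paper first observes that if a boolean combination of atomic formulas over $\cF$ witnesses non-$\cF$-perpendicularity then one of the atomic conjuncts already does, so one may assume the witness is a single polynomial inequality $p(x,y,\bar c)>0$. It then builds, by transfinite induction (in the spirit of Claim~\ref{25}), interleaved cofinal subsequences $\langle a_{t_\alpha}\rangle$, $\langle b_{s_\alpha}\rangle$ with $p(a_{t_\alpha},b_{s_\beta},\bar c)>0 \iff \beta\ge\alpha$, and applies the Intermediate Value Theorem between $a_{t_\alpha}$ and $a_{t_{\alpha+1}}$ to produce $d_\alpha$ with $p(d_\alpha,b_{s_\alpha},\bar c)=0$; Claim~\ref{39} then finishes. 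Your approach instead keeps the general $\varphi$, analyzes the interval decomposition of $E_x=\{y:\varphi(x,y,\bar c)\}$, isolates the left endpoint $\alpha_t$ of the cut-straddling interval, and argues it is a polynomial root cofinal in $C_2^-$. The paper's route is shorter because the reduction to a single atomic formula eliminates the boundary-tracking and the pigeonhole you need; your route has the virtue of making the geometric picture (the moving interval whose left edge sweeps up to the cut) explicit.

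One small correction: your appeal to ``indiscernibility of $\bar{\bold a}$'' in the pigeonhole step is not doing any work, since $\bar{\bold a}$ is only assumed indiscernible over $\emptyset$, not over $\bar c$. What actually carries that step is the pure pigeonhole: the finitely many polynomials $p_i$ and the finitely many root-indices partition a cofinal set of $t$'s into finitely many pieces, one of which must be cofinal. This yields a cofinal \emph{subset} $I_1\subseteq I$, not an end-segment; but that is all Claim~\ref{39} requires, and your monotonicity argument via Corollary~\ref{9} then goes through on the end-segment of $C_1^-$ where $g$ is defined and monotone (cofinality of $\langle g(a_t):t\in I_1\rangle$ in $C_2^-$ rules out constant or decreasing).
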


\begin{proof}
First assume that $C_1,C_2$ are positively dependent.  So there exist
intervals $I_1$ around $C_1$ and $I_2$ around $C_2$ and a polynomial
$p(x,y,\bar c)$ with $\bar c \subset \cF$ such that $p(x,y,\bar c) =
0$ defines a function from $I_1$ onto $I_2$ respecting the cuts,
denoted $y = f(x)$.   Define $\varphi(x,y,\bar c) = ``f(x)=z$ and $y
\ge z"$.  Now for every large enough $t \in I$ we have $f(a_t) \in
C_2$.  Hence for some large enough $s \in J$ we have $b_s > f(a_t)$
and so $\models \varphi(a_t,b_s,\bar c)$.  On the other hand for every
large enough $s \in J$ for every large enough $t \in I$ we have
$f(a_t) > b_s$ and so $\models \neg \varphi(a_t,b_s,\bar c)$.  So by
definition $\bar{\bold a},\bar{\bold b}$ are not $\cF$-perpendicular.

For the second direction, we will use Claim \ref{39}.  So $\bar{\bold
  a},\bar{\bold b}$ are not $\cF$-perpendicular.  By quantifier
  elimination for some polynomial $p(x,y,\bar c)$ we have that
\mn
\begin{enumerate}
\item  for every large enough $t \in I$ for every large enough $s \in
  J$ we have $p(a_t,b_s,\bar c) > 0$
\sn
\item  for every large enough $s \in J$ for every large enough $t \in I$
 we have $p(a_t,b_s,\bar c) > 0$.
\end{enumerate}
\mn
We shall now find an ordinal $\lambda$ and $\bar{\bold a} = \langle
t_\alpha:\alpha < \lambda \rangle,\bar{\bold b}' = \langle
s_\alpha:\alpha < \lambda\rangle$ unbounded sequences of $I$ with the
property that for every $\alpha,\beta \in \lambda$ we have $\models
p(a_{t_\alpha},b_{s_b},\bar c) > 0$ iff $\beta \ge \alpha$.  So let
$\lambda = \cf(I)$ and $\bar{\bold c} = \langle c_\alpha:\alpha <
\lambda\rangle$ be an unbounded sequence in $I$.  We will define
$\langle t_\alpha:\alpha < \lambda\rangle,\langle s_\alpha:\alpha <
\lambda\rangle$ by induction on $\alpha < \lambda$ such that $t_\alpha
> c_\alpha$ for every $\alpha < \lambda$.  For $\alpha = 0$ let $t_0$
be large enough as in (1) and larger than $c_0$.  Let $s_0$ be large
enough as in (1) for $t_0$ and large enough as in (2).  For limit
ordinals $\alpha < \lambda$ we know that for every $\beta < \alpha$
for $b_{s_b}$ we have some large enough $t_{s_b}$ such as in (2) and
since $a < \cf(I)$ some $t_\alpha$ is large enough as in (2) for every
$\beta < \alpha$ and larger than $\langle t_\beta:\beta <
\alpha\rangle$ and larger than $c_\alpha$.  Let $s_\alpha$ be large
enough as in (1) for $t_\alpha$.  For $\alpha = \beta +1$ let
$t_\alpha$ be large enough as in (2) for $b_{s_\beta}$ and larger than
$c_\alpha$.  Let $s_\alpha$ be large enough as in (1) for $t_\alpha$.
One can easily check that the condition that for every $\alpha,\beta
\in \lambda$ we have $\models p(a_{t_\alpha},b_{s_b},\bar c) > 0$ iff
$\beta \ge \alpha,\bar{\bold a}'$ is unbounded since $t_\alpha >
c_\alpha$ for every $\alpha < \lambda$ and $\bar{\bold c} = \langle
c_\alpha:\alpha < \lambda\rangle$ is unbounded in $I$.  We will now
prove that $\bar{\bold b}'$ is unbounded in $J$.  Assume otherwise,
then for some large enough $s \in UJ$ we have that $s$ is large enough
as in (1) for every $\langle t_\alpha:\alpha < \lambda\rangle$ and $s$
is larger than $\langle s_\alpha:\alpha < \lambda\rangle$.

Now take $t \in I$ large enough as in (2) for $s$, so $p(a_t,b_s,\bar
c) < 0$.  We know that $t < t_\alpha$ for some $\alpha < \lambda$
since $\bar{\bold a}'$ is unbounded, so $p(a_{t_\alpha},b_s,\bar c) <
0$.  Now $s_\alpha$ is large enough as in (1) for $t_\alpha$ and $s >
s_\alpha$ so $p(a_{t_\alpha},b_s,\bar c) > 0$-contradiction.

Now for every $\alpha < \lambda$ we have that
$p(a_{t_\alpha},b_{s_\alpha},\bar c) > 0$ and 
$p(a_{t_{\alpha +1}},b_{s_\alpha},\bar c) <0$ so for some $d_\alpha
\in (a_{t_\alpha},a_{t_{\alpha + a}})$ we have
$p(d_\alpha,b_{s_\alpha},\bar c) = 0$.  Now by Claim \ref{39} we are done.
\end{proof}

\begin{claim}
\label{43}
Let $\bar{\bold a} = \langle a_t:t \in I\rangle$ be an endless
strictly increasing sequence in some real closed field $\cF$.  Then
for some $\cS$ a real closed field extending $\cF$ we have that
$\bar{\bold a}$ induces a Dedekind cut in $\cS$ and in every real
closed field extending $\cS$.
\end{claim}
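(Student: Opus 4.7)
My plan is to construct $\cS$ by (if necessary) first extending $\cF$ so that $\bar{\bold a}$ has a supremum $b^*$, then adjoining a positive infinitesimal to obtain $\cS = \cF_0(\epsilon)^{rc}$, and finally proving persistence via a ``fractional shrinkage'' map $c \mapsto (3c - b^*)/2$.

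The first step is a case analysis. If $\bar{\bold a}$ already has a minimum upper bound $b^*$ in $\cF$, set $\cF_0 = \cF$; note that in this case the cut must be Scott, since $\sup \bar{\bold a} = b^*$ forces $b^* - a_t$ to become arbitrarily small. If $\bar{\bold a}$ is unbounded in $\cF$, first extend $\cF$ by compactness to realize $\{x > a_t : t \in I\}$, reducing to the bounded case. Otherwise the cut is Dedekind in $\cF$ (bounded with no supremum); if it is additionally Scott, I extend to $\cF_0 := \cF(b^*)^{rc}$ where $b^*$ realizes $\{x > a_t : t \in I\} \cup \{x \leq b : b \in U_\cF\}$ (finitely satisfiable in $\cF$ by density), and quantifier elimination shows this realization is the minimum upper bound of $\bar{\bold a}$ in $\cF_0$, Scott-ness yielding essential uniqueness of the realization. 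The remaining case of a Dedekind non-Scott cut of $\bar{\bold a}$ in $\cF$, with witness $t_0 > 0$, is handled by taking $\cS := \cF$ directly: $a_t + t_0 \in C^-$ for every $t$ (since it is a lower bound of $U_\cF$, which by Dedekind-ness has no minimum), so any putative minimum upper bound $c$ in any real closed extension of $\cF$ satisfies $c > a_{s'} \geq a_t + t_0$ for some $s'$, and then $c - t_0/2$ is a strictly smaller upper bound, contradicting minimality.

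In the main case, set $\cS := \cF_0(\epsilon)^{rc}$ with $\epsilon > 0$ infinitesimal relative to $\cF_0$. The sequence $(b^* - n\epsilon)_{n \in \omega}$ is a strictly decreasing chain of upper bounds of $\bar{\bold a}$ in $\cS$, so the cut of $\bar{\bold a}$ in $\cS$ is Dedekind. For persistence, let $\cK \supseteq \cS$ be any real closed field extension and suppose toward contradiction that $c \in \cK$ is a minimum upper bound. If $c \geq b^*$, then $b^* - \epsilon \in \cS \subseteq \cK$ is strictly smaller and still an upper bound, contradiction; so $c < b^*$. Define $c' := (3c - b^*)/2 = c - (b^* - c)/2 \in \cK$, strictly smaller than $c$. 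To verify $c' > a_t$ for every $t$, rewrite the condition as $c > a_t + (b^* - a_t)/3$: since $b^* = \sup_{\cF_0} \bar{\bold a}$, there exists $s \in I$ with $a_s > b^* - 2(b^* - a_t)/3 = a_t + (b^* - a_t)/3$, and $c > a_s$ gives the required inequality. Hence $c'$ is a strictly smaller upper bound, contradicting the minimality of $c$.

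The main obstacle is the quantifier-elimination step in the reduction showing that the adjoined element $b^*$ is indeed the minimum of $U_{\cF_0}$ (equivalently, the supremum of $\bar{\bold a}$ there), rather than simply lying $\leq$ every $\cF$-upper bound; this is where Scott-ness enters, guaranteeing that the cut type has an essentially unique realization in the real closure $\cF(b^*)^{rc}$. Once this is in place, the elementary ``fractional shrinkage'' map $c \mapsto (3c - b^*)/2$ handles persistence uniformly across every real closed extension $\cK \supseteq \cS$.
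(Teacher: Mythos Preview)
Your overall strategy can be made to work, but the point you flag as the ``main obstacle'' is a genuine gap as written: the assertion that a Scott-cut realization $b^*$ is automatically the \emph{minimum} upper bound in $\cF(b^*)^{rc}$ is not immediate from quantifier elimination. What must be shown is that $\cF(b^*)^{rc}$ contains no positive $\delta$ below every $b^* - a_t$; writing $\delta = h(b^*)$ for an $\cF$-definable monotone $h$, one checks that $h(a_t) \le 0$ on one side of the cut and $h(b) > 0$ on the other, so the boundary of $\{x : h(x) \le 0\}$ is an $\cF$-point lying strictly between all $a_t$ and all $b \in C^+$, contradicting Dedekind-ness. This argument is not long, but it is the actual content of your case 3a and must be supplied --- without it the fractional-shrinkage step, which genuinely needs $b^* = \sup_{\cF_0}\bar{\bold a}$, does not get off the ground.

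By contrast, the paper's proof sidesteps the entire case analysis with a single uniform construction: take $\cS \supseteq \cF$ realizing both the type $\{x > a_t : t \in I\}$ (giving an upper bound) and the type $\{x > 0\} \cup \{x < a_t - a_s : s <_I t\}$ (giving a positive $\delta$ below every gap of the sequence). Then in any real closed $\cK \supseteq \cS$, whenever $c$ is an upper bound of $\bar{\bold a}$ so is $c - \delta$, since $c - \delta \le a_t$ would force $c \le a_t + \delta < a_s$ for any $s >_I t$. This yields persistence with no Scott/non-Scott split, no supremum construction, and no uniqueness-of-realization lemma; your fractional-shrinkage map $c \mapsto (3c - b^*)/2$ is replaced by the simpler translation $c \mapsto c - \delta$, made possible because $\delta$ was calibrated against the gaps $a_t - a_s$ of the sequence itself rather than against all of $\cF_0$.
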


\begin{proof}
Take $\cS$ to be a real closed field extending $\cF$ with a realization
of the type $\{x > 0\} \cup \{x < a_t - a_s:t,s \in I,t > s\}$ and of
the type $\{x > a_t:t \in I\}$.  The rest is left to the reader.
\end{proof}

\begin{corollary}
\label{44}
In the theory of real closed fields, not being perpendicular is a
transitive relation on endless indiscernible sequences of elements.
\end{corollary}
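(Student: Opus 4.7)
The plan is to reduce the assertion to the transitivity of positive equivalence of Dedekind cuts (Remark \ref{34}), using Theorem \ref{42} as the bridge. Let $\bar{\bold a}^1,\bar{\bold a}^2,\bar{\bold a}^3$ be endless indiscernible sequences of elements with $\bar{\bold a}^1$ not perpendicular to $\bar{\bold a}^2$ and $\bar{\bold a}^2$ not perpendicular to $\bar{\bold a}^3$; I wish to deduce that $\bar{\bold a}^1$ is not perpendicular to $\bar{\bold a}^3$. The main obstacle is bringing the hypotheses into the form required by Theorem \ref{42}, namely that each sequence be strictly increasing and induce a Dedekind cut in a common, sufficiently saturated ambient real closed field.

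First I would normalize. A constant endless indiscernible sequence is vacuously perpendicular to every other such sequence, because for constant $\bar{\bold a}^i$ the truth value of $\varphi[a^i_t,a^j_s]$ is independent of $t$, so the two iterated-limit conditions in Claim \ref{21}(4) collapse to one and are satisfied by the single truth value supplied by Claim \ref{20}. Hence if any $\bar{\bold a}^i$ were constant the hypotheses would fail and the conclusion would hold vacuously, so I may assume each sequence is strictly monotonic. For any $\bar{\bold a}^i = \langle a^i_t : t \in I^i\rangle$ that is strictly decreasing I replace it by $\langle -a^i_t : t \in I^i\rangle$: this is still an endless indiscernible sequence, is now strictly increasing, and since $x\mapsto -x$ is $\emptyset$-definable the substitution $\varphi(x,y)\leftrightarrow\varphi(-x,y)$ in Claim \ref{21}(4) shows that the relation ``is perpendicular to'' with every other sequence is preserved. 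So I may assume all three $\bar{\bold a}^i$ are strictly increasing.

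Next I would apply Claim \ref{43} three times to pass to a real closed field $\cF$ extending the ambient model in which each $\bar{\bold a}^i$ induces a Dedekind cut $C_i$, taking $\cF$ sufficiently saturated so that, by Corollary \ref{23}(1), $\cF$-perpendicularity coincides with full perpendicularity on the three sequences. Then Theorem \ref{42} converts the two hypotheses into: $C_1$ is positively equivalent to $C_2$, and $C_2$ is positively equivalent to $C_3$. By Remark \ref{34}, positive equivalence of Dedekind cuts is transitive, hence $C_1$ is positively equivalent to $C_3$; a final application of Theorem \ref{42} yields that $\bar{\bold a}^1$ is not perpendicular to $\bar{\bold a}^3$, as desired. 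The genuine work lies in the reduction to strictly increasing sequences inducing cuts in the same saturated model; once that is in place, the three-step pattern (convert to cut language, apply transitivity, convert back) is essentially formal.
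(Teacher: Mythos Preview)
Your proof is correct and follows essentially the same route as the paper: pass to a large enough real closed field in which the three sequences induce Dedekind cuts, translate ``not perpendicular'' into ``positively equivalent cuts'' via Theorem \ref{42}, invoke transitivity of positive equivalence (Remark \ref{34}), and translate back. The paper's version is terser: rather than invoking saturation and Corollary \ref{23}(1), it simply chooses $\cS$ to contain the finitely many parameters witnessing non-perpendicularity (from Claim \ref{21}(4)) together with the data from Claim \ref{43}, which already suffices for ``not $\cS$-perpendicular''. Your normalization step (ruling out constant sequences and replacing decreasing ones by $\langle -a^i_t\rangle$) is a genuine improvement in rigor, since Definition \ref{40}, Claim \ref{43}, and Theorem \ref{42} are all stated only for strictly increasing sequences and the paper's proof silently assumes this reduction has been made.
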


\begin{proof}
Let $\bar{\bold a},\bar{\bold b},\bar{\bold c}$ be endless
indiscernible sequences of elements in some real closed field $\cF$
such that $\bar{\bold a},\bar{\bold b}$ are not perpendicular and
$\bar{\bold b},\bar{\bold c}$ are not perpendicular.  So for some real
closed $\cS$ we have that $\bar{\bold a},\bar{\bold b}$ are not
$\cS$-perpendicular, $\bar{\bold b},\bar{\bold c}$ are not
$\cS$-perpendicular and that $\bar{\bold a},\bar{\bold b},\bar{\bold
  c}$ induce Dedekind cuts in $\cS$, denoted $\cC_a,\cC_b,\cC_c$.  So
by the previous thoerem $\cC_a,\cC_b$ are positively equivalent, and
$\cC_b,\cC_b$ are positively equivalent and by transitivity
$\cC_a,\cC_c$ are positively equivalent.  So $\bar{\bold a},\bar{\bold
  c}$ are not $\cS$-perpendicular and so are not perpendicular. 
\end{proof}

\begin{example}
\label{45}
Take $T = \Th(\bbR^2,<_1,<_2)$ (so $T$ is dependent because it can be
interpreted in $\bbR$ and $\bbR$ is dependent) and

\[
\bar{\bold a} = \langle (n,0):n \in \bbN\rangle,\bar{\bold b} =
\langle (n,n):n \in \bbN\rangle,\bar{\bold c} = \langle (0,n):n \in
\bbN\rangle.
\]
\end{example}

\begin{remark}
In the next Claim we replace $\cF$-perpendicular Definition \ref{22},
in Theorem \ref{42} by prependicular, Definition \ref{15} is a special case.
\end{remark}

\begin{claim}
\label{46}
Let $\cC_1,\cC_2$ be two Dedekind cuts in some real closed field $\cF$
such that
the left cofinality of $\cC_\ell$ is strictly smaller than the right
cofinality of $\cC_\ell$ and larger than $\aleph_0$ for $\ell \in
\{1,2\}$.  Let $\bar{\bold a}^1 = \langle a^1_t:t \in
I^1\rangle,\bar{\bold a}^2 = \langle a^2_t:t \in I^2\rangle$ induce
$\cC_1,\cC_2$ respectively.  So $\cC_1$ is positively equivalent to
$\cC_2$ iff $\bar{\bold a}^1,\bar{\bold a}^2$ are not perpendicular. 
\end{claim}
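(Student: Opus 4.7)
My plan is to reduce the claim to Theorem \ref{42} by showing that, under the cofinality hypothesis, $\cF$-perpendicularity (Definition \ref{22}) and full perpendicularity (Definition \ref{15}) of $\bar{\bold a}^1, \bar{\bold a}^2$ coincide. The bulk of the work lies in the converse implication.

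The forward direction $(\Rightarrow)$ is immediate and needs no cofinality assumption. If $\cC_1$ is positively equivalent to $\cC_2$, Theorem \ref{42} says $\bar{\bold a}^1, \bar{\bold a}^2$ are not $\cF$-perpendicular. Since the characterisation of perpendicularity given by Claim \ref{21}(4) is the condition ``same truth value in both iterated large-enough quantifiers for every formula with \emph{arbitrary} parameters'', it is formally stronger than $\cF$-perpendicularity (which restricts parameters to $\cF$); hence the sequences are not perpendicular.

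For the converse $(\Leftarrow)$, suppose $\bar{\bold a}^1, \bar{\bold a}^2$ are not perpendicular. By Theorem \ref{42} it suffices to show that they are not $\cF$-perpendicular. By Claim \ref{21}(4) there exist a formula $\varphi(x, y, \bar d)$, with $\bar d$ possibly in a real closed extension $\cF^* \supseteq \cF$, and truth values $\bold t^1 \ne \bold t^2$ witnessing the failure. Quantifier elimination (Theorem \ref{6}) lets me take $\varphi(x, y, \bar z) = (p(x, y, \bar z) > 0)$ for a polynomial $p$. Mimicking the zigzag construction from the proof of Theorem \ref{42} with this polynomial, I extract cofinal subsequences of $\bar{\bold a}^1, \bar{\bold a}^2$ of common length $\lambda = \cf(I^1) = \cf(I^2)$ (the equality is Claim \ref{25}), and via the intermediate value theorem obtain roots $e_\alpha \in \cF^*$ with $a^1_{t_\alpha} < e_\alpha < a^1_{t_{\alpha+1}}$ and $p(e_\alpha, a^2_{s_\alpha}, \bar d) = 0$. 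Applied in $\cF^*$, Claim \ref{39} yields a positive equivalence of the extended cuts in $\cF^*$.

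The main obstacle is the descent from this $\cF^*$-witness back to an $\cF$-witness, and this is precisely where the cofinality hypothesis $\aleph_0 < \cf(C^-_\ell) < \cf(C^{+,*}_\ell)$ becomes indispensable. The assumption $\cf(C^-_\ell) > \aleph_0$ guarantees (together with Claim \ref{41}) that the left-cofinal approximations of $C_\ell^-$ are uncountable and model-theoretically tame; the strict inequality $\cf(C^-_\ell) < \cf(C^{+,*}_\ell)$ is the technical lever for replacing the external parameters $\bar d$ by some $\bar c \in \cF$ while preserving the sign pattern of $p$ on the zigzag pairs. Concretely I plan to construct $\bar c$ by a transfinite induction of length $\lambda$: at stage $\alpha$, the constraint to match the sign pattern of $p(\cdot, \cdot, \bar d)$ on the previously built zigzag pairs amounts to fewer than $\lambda$ open conditions on $\bar c$, and the strict cofinality gap ensures that the ``forbidden regions'' coming from each right-end-segment of the cuts cannot accumulate to block the construction before it terminates at stage $\lambda$. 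Once such an $\cF$-polynomial witness is secured, Claim \ref{39} delivers the positive equivalence of $\cC_1$ and $\cC_2$ in $\cF$.
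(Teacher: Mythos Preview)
Your forward direction matches the paper's and is fine. The reverse direction, however, has a genuine gap at the descent step, and the paper takes a quite different (and much cleaner) route.

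In the paper's proof of $(\Leftarrow)$ one argues by contradiction: if $\cC_1$ is not positively equivalent to $\cC_2$, then by Theorem~\ref{42} the sequences are $\cF$-perpendicular, and one then invokes Claim~\ref{24} to upgrade this to full perpendicularity. The content of Claim~\ref{24} is that $\cF$-perpendicularity suffices provided the witnesses $b^\ell_n$ realizing the successive averages can be chosen \emph{inside} $\cF$. The cofinality hypothesis is used exactly here: each average type $q(x)=\Av(\bar{\bold a}^1\cup\bar{\bold a}^2\cup\{b^k_m:\ldots\},\bar{\bold a}^\ell)$ is over a parameter set of size at most the left cofinality of $\cC_\ell$; by quantifier elimination each formula in $q$ holds on a neighborhood of the cut, hence on an initial segment of $\cC^+_\ell$ bounded by some $b_\varphi$. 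Since the right cofinality strictly exceeds the left, the set $\{b_\varphi:\varphi\in q\}$ is not coinitial in $\cC^+_\ell$, so some element of $\cC^+_\ell\subseteq\cF$ realizes $q$. This gives the required $b^\ell_n\in\cF$ and finishes the argument via Claim~\ref{24}.

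Your plan instead starts from a witness $p(x,y,\bar d)>0$ with $\bar d$ in an extension $\cF^*$ and attempts to replace $\bar d$ by some $\bar c\in\cF$ preserving the sign pattern on the zigzag pairs. But the conditions you need $\bar c$ to satisfy are $\lambda$ many open conditions on a tuple in $\cF^m$, and the cofinality gap you have is a statement about the one-variable cuts $\cC_1,\cC_2$, not about saturation of $\cF^m$. There is no reason the ``forbidden regions'' for $\bar c$ should be governed by the right cofinalities of $\cC_1,\cC_2$; the external parameters $\bar d$ could realize types entirely unrelated to these cuts. Your invocation of Claim~\ref{41} does not help here either --- that claim produces indiscernible sequences inducing given cuts, which is irrelevant to locating $\bar c$. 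In short, the descent is the whole difficulty, and your sketch does not supply a mechanism for it. The paper avoids this problem altogether by never leaving $\cF$: rather than pulling external parameters down, it pushes the average-realizers $b^\ell_n$ into $\cF$ using the cofinality gap and then applies Claim~\ref{24}.
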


\begin{proof}
First assume that $\cC_1$ is positively equivalent to $\cC_2$.  So by
Theorem \ref{42} we have that $\bar{\bold a}^1,\bar{\bold a}^2$ are
not $\cF$-perpendicular and so are not perpendicular.

Now assume $\bar{\bold a}^1,\bar{\bold a}^2$ are not perpendicular and
for a contradiction assume $\cC_1$ is not positively equivalent to
$\cC_2$.  First, by Theorem \ref{25} we have that $\cf(\bar{\bold
  a}^1) = \cf(\bar{\bold a}^2)$ and by Theorem \ref{42} we conclude
that $\bar{\bold a}^1,\bar{\bold a}^2$ are $\cF$-perpendicular.  We
will use Claim \ref{24} to show that $\bar{\bold a}^1,\bar{\bold a}^2$
are perpendicular.  We will choose by induction on $n \in \omega$ two
elements $b^1_n,b^2_n$ of $\cF$ such that the conditions in the claim
occur.

So in the $n$-th stage, we first find $b^1_n \in \cF$ which realize

\[
q(x) = \Av(\bar{\bold a}^1 \cup \bar{\bold a}^2 \cup \{b^k_m:k \in
\{1,2\},m < n\},\bar{\bold a}^\ell).
\]

\mn
So for every $\varphi(x) \in q(x)$ for every large enough $t \in I^1$
we have $\models \varphi[a_t]$.  By quantifier elimination the set
$\{x \in \cF: \models \varphi[x]\}$ is a finite union of
intervals. So for some $b_\varphi \in \cC^+_1$ we have that for every
$c \in \cC^+_1$ such that $e < b_\varphi$ we have $\models
\varphi[e]$.  Since $|q(x)| = \cf(\bar{\bold a}^1)$ and the latter is
smaller than the right cofinality of $\cC_1$ then for some $b \in
\cC^+_1$ we have that $b < b_\varphi$ for every $\varphi \in q$.  In
other words $b$ realizes $q$.  Now just denote $b^1_n = b$ and we are
done.  $b^2_n$ is chosen in the same manner.

This contradiction concludes the proof. 
\end{proof}
\newpage

\section {Strong perpendecularity in real closed fields}

In this item we explore strong perpendicularity of sequences in real
closed fields.  We first define neighbor sequences and strong
perpendicularity.  We then show that strong perpendicularity is
invariant to reversing the order of a sequence, applying a definable
function and to taking pre-images of definable functions.  The section
ends with the result that there are no strongly-perpendicular sequences
in real closed fields (Theorem \ref{63}).

\begin{definition}
\label{47}
1) Two indiscernible sequences $\langle a^\ell_t:t \in I^\ell,\ell \in
   \{1,2\}\rangle$ will be called immediate neighbors (or inb's for
   short) if there exists some indiscernible sequence $\langle b_s:s
   \in J\rangle$ and order preserving or anti-order preserving
   injections $\sigma^\ell:I^\ell \rightarrow J$ such that for $\ell
   \in \{1,2\},t \in I^\ell$ we have $a^\ell_t = b_{\sigma^\ell(t)}$.
   In other words, two sequences are immediate neighbors if they can
   be embedded in the same indiscernible sequence.

\noindent
2) Two indiscernible sequences $\bar{\bold a}^1,\bar{\bold a}^2$ will
   be called neighbors (or nb's for short) if there is a finite
   sequence $\bar{\bold b}^0,\bar{\bold b}^1,\dotsc,\bar{\bold
   b}^\ell$ of indiscernible sequences such that $\bar{\bold a}^1 =
   \bar{\bold b}^0,\bar{\bold a}^2 = \bar{\bold b}^\ell$ and
   $\bar{\bold b}^i,\bar{\bold b}^{i+1}$ are inb's for
   $i=0,\dotsc,\ell-1$.  In this case we say that the sequences are
   $\ell$-nb's. Note that to say that two sequences are 1-nb's is the
   same as saying that they are inb's.
\end{definition}

\noindent
Recall Definition \ref{30}.
\begin{example}
\label{48}
Let $\bar{\bold a}^1,\bar{\bold a}^2$ be two indiscernible sequences
in some real closed field $\cF$ such that both induce multiplicative
cuts.  Then $\bar{\bold a}^1,\bar{\bold a}^2$ are 2-nb's.
\end{example}

\begin{proof}
The proof is based on the claim below.

We construct a third sequence $\bar{\bold b} = \langle b_s:s \in
\omega\rangle$ such that $\bar{\bold a}^1,\bar{\bold b},\bar{\bold
  a}^2,\bar{\bold b}$ are both indiscernible.  This will prove the
claim with the sequence $\bar{\bold a}^1,\bar{\bold b},\bar{\bold
  a}^2$ witnessing that $\bar{\bold a}^1,\bar{\bold a}^2$ are indeed
2-nb's.  So we begin with some endless well-ordered set $\omega$ and
construct $b_s$ by induction on $s \in \omega$.  Assume $b_t$ has
already been defined for every $t < s$.  We define $b_s$ to be some
element from $\cC$ realizing the type

\begin{equation*}
\begin{array}{clcr}
\{x>p(a^1_{q_0},\dotsc,a^1_{q_m},a^2_{r_0},\dotsc,a^2_{r_n},b_{t_0},
\dotsc,b_{t_k}):&p \text{ is some polynomial}, \\
  &a^1_{q_0},\dotsc,a^1_{q_m} \text{ is a subsequence of } \bar{\bold
  a}^1, \\
 &a^2_{r_0},\dotsc,a^2_{r_n} \text{ is a subsequence of } \bar{\bold
  a}^2, \\
  &t_0 <_J \ldots <_J t_k <_J s\}.
\end{array}
\end{equation*}

\mn
Now use the claim below to show that $\bar{\bold a}^1,\bar{\bold
  b},\bar{\bold a}^2,\bar{\bold b}$ are both indiscernible.
\end{proof}

\begin{claim}
\label{49}
Let $\bar{\bold a} = \langle a_t:t \in I\rangle$ be some increasing
sequence of positive elements in some real closed field $\cF$.  Then
$\bar{\bold a}$ is indiscernible and induces a multiplicative cut in
$\cF$ iff for every $k \in \bbN,t_0 <_I \ldots <_I t_{k-1} <_I t_k$ we
have that $\models \{a_{t_k} >_{\cF}
P(a_{t_0},\dotsc,a_{t_{k-1}}):P(x_0,\dotsc,x_{k-1})$ is some
parameter-free polynomial$\}$.
\end{claim}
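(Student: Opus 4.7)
My plan is to handle the two directions separately: the forward implication (indiscernible plus multiplicative cut $\Rightarrow$ domination) uses indiscernibility to force every polynomial value into $C^-$, and the reverse implication uses the domination condition to pin down the signs of integer polynomials by an induction, from which indiscernibility then falls out of quantifier elimination.

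For the forward direction, suppose $\bar{\bold a}$ is indiscernible and induces a multiplicative cut $C=(C^-,C^+)$. First, $2\in C^-$ lies below some $a_t$, and $1$-indiscernibility of $\bar{\bold a}$ upgrades this to $a_t>2$ for every $t$; combined with closure of $C^-\cap\cF^+$ under multiplication, every positive integer lies below every $a_t$. For any parameter-free polynomial $P(x_0,\dotsc,x_{k-1})$ I would bound $|P(a_{t_0},\dotsc,a_{t_{k-1}})|$ crudely by $N\cdot a_{t_{k-1}}^D$ with $N,D\in\bbN$; multiplicativity then places $N\cdot a_{t_{k-1}}^D$, and hence $P(a_{t_0},\dotsc,a_{t_{k-1}})$ itself, in $C^-$. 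Now the formula ``$a_{t_k}>P(a_{t_0},\dotsc,a_{t_{k-1}})$'' depends only on the order type of the tuple, so by indiscernibility it holds either for all such tuples or for none; if it failed everywhere, $P(a_{t_0},\dotsc,a_{t_{k-1}})$ would be a strict upper bound for the cofinal end-segment $\{a_t:t>t_{k-1}\}$, placing it in $C^+$ and contradicting $P\in C^-$.

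For the reverse direction, multiplicativity falls out quickly: the $k=0$ case of the condition gives $a_{t_0}>n$ for every $n\in\bbZ$, so $2\in C^-$; and given $x,y\in C^-\cap\cF^+$ both below some $a_t$, the $k=1$ case applied to the polynomial $P(x_0)=x_0^2$ produces $a_{t'}>a_t^2\ge xy$, so $xy\in C^-$. Indiscernibility is the real content. By quantifier elimination in real closed fields, it suffices to show that for every $k\ge 1$, every nonzero $Q\in\bbZ[x_0,\dotsc,x_{k-1}]$, and every $t_0<\dotsc<t_{k-1}$ in $I$, the sign of $Q(a_{t_0},\dotsc,a_{t_{k-1}})$ depends only on $Q$. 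I plan to prove this by induction on $k$ with the strengthened statement $(\star_k)$: order monomials lexicographically with $x_{k-1}$ most significant, and let $c\cdot m$ be the leading term of $Q$; then $Q(a_{t_0},\dotsc,a_{t_{k-1}})$ has the sign of $c$ and absolute value at least $|c|\cdot m(a_{t_0},\dotsc,a_{t_{k-1}})/2^k$.

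The base case $k=1$ uses that $a_{t_0}$ exceeds every integer, so that the top monomial $c\cdot x_0^d$ dominates the rest. For the inductive step I write $Q=\sum_{j=0}^d R_j(x_0,\dotsc,x_{k-1})x_k^j$ with $R_d\ne 0$; the hypothesis $(\star_k)$ applied to $R_d$ supplies its sign together with the lower bound $|R_d(a_{t_0},\dotsc,a_{t_{k-1}})|\ge |c|\cdot m_R(a_{t_0},\dotsc,a_{t_{k-1}})/2^k$, and a crude estimate bounds $\max_j|R_j(a_{t_0},\dotsc,a_{t_{k-1}})|$ by some polynomial $S$ with integer coefficients. The domination condition then supplies $a_{t_k}>2^{k+1}d\cdot S(a_{t_0},\dotsc,a_{t_{k-1}})$, which forces $R_d\cdot a_{t_k}^d$ to dominate the lower-$x_k$-degree terms; a short arithmetic check (using $|c|\cdot m_R\ge 1$) yields both sign-correctness and the preserved bound $|Q|\ge|c|\cdot m_Q/2^{k+1}$, completing $(\star_{k+1})$. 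The main technical obstacle is precisely keeping the lower bound on $|R_d(a_{t_0},\dotsc,a_{t_{k-1}})|$ \emph{polynomial} in $(a_{t_0},\dotsc,a_{t_{k-1}})$, since the domination condition only grants $a_{t_k}$ above integer polynomials in earlier terms and cannot absorb arbitrary rational expressions; the strengthened inductive statement is designed exactly to bypass this.
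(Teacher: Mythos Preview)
Your proof is correct and structurally parallel to the paper's, but with a genuine difference in the reverse direction. For the forward implication, the paper decomposes $P$ into monomials, uses multiplicativity to place each monomial in $C^-$, and then invokes additivity to recombine; you instead bound $|P|$ by a single monomial $N\cdot a_{t_{k-1}}^D$. Both then finish by indiscernibility, and the difference is cosmetic. The multiplicativity half of the reverse direction is identical to the paper's (use $P(x_0)=x_0^2$).

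Where your proof differs substantively is in deriving indiscernibility from the domination hypothesis. The paper dispatches this in one line: ``By assumption we have that $\bar{\bold a}$ is $(\bbP,\emptyset)$-indiscernible,'' and then invokes quantifier elimination. You actually supply the content behind that sentence, proving by induction on $k$ (with the strengthened hypothesis carrying the lower bound $|Q|\ge |c|\cdot m/2^k$) that the sign of every nonzero integer polynomial in $a_{t_0},\dotsc,a_{t_{k-1}}$ is determined by its lex-leading coefficient. Your observation that the strengthened hypothesis is needed precisely so that the lower bound on $|R_d|$ stays polynomial (and hence can be fed back into the domination condition) is exactly the point, and the arithmetic using $|c|\cdot m_R\ge 1$ to absorb the denominator is correct. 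In short, the paper's argument and yours agree in outline, but yours makes rigorous the step the paper takes for granted.
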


\begin{proof}
First assume that $\bar{\bold a}$ is indiscernible and induce a
multiplicative cut in $\cF$.  Let $k \in \bbN,P(x_0,\dotsc,x_{k-1})$
some polynomial and $t_0 <I \ldots <_I t_k$ induces from $I$ and we
will prove that $\models a_{t_\alpha} >_{\cF}
P(a_{t_0},\dotsc,a_{t_{k-1}})$.  We know that $\bar{\bold a}$ is
indiscernible so it is enough to prove the above for some increasing
sub-sequence in $I$ of length $k+1$.  Now $P(x_0,\dotsc,x_{k-1}) =
\sum\limits^{m}_{i=0} q_i(x_0,\dotsc,x_{k-1})$ where $q_i$ are
monomial.  $\bar{\bold a}$ induces a multiplicative cut so for every
$i \in \{0,\dotsc,m\}$ there exists some $s_i \in I,s_i >_I t_{k-1}$
such that $a_{s_i} > q_i(a_{t_0},\dotsc,a_{t_{k-1}})$.  Since
$\bar{\bold a}$ also induces an additive cut (the reader can easily
check that every multiplicative cut is also an additive cut) for some $s
\in I,s >_I t_{k-1}$ we have that $a_s > \sum\limits^{m}_{i=0}
a_{s_i}$.  All together we have $a_s > \sum\limits^{m}_{i=0} q_i
(a_{t_0},\dotsc,a_{t_{k-1}}) = P(a_{t_0},\dotsc,a_{t_{k-1}})$ and we
  are done.

For the second direction we first prove that $\bar{\bold a}$ is
indiscernible.  Denote by $\bbP$ the set of all polynomials.  By
assumption we have that $\bar{\bold a}$ is
$(\bbP,\phi)$-indiscernible.  By quantifier elimination for the theory
of real closed fields we have that $\bar{\bold a}$ is indiscernible.
To prove that $\bar{\bold a}$ induces a multiplicative cut we denote
by $(C^-,C^+)$ the cut induced by $\bar{\bold a}$ and assume $x,y \in
\cF$ such that $x,y \in C^- \cap \cF^+$.  By definition we have that
for some $t \in I:a_t > x,y$.  Let $s >_I t$.  So by assumption $a_s >
p(a_t)$ for every polynomial $p$, specifically $p(x) = x^2$.  So $a_s
> a^2_t > x \cdot y$ (remember that all the elements discussed here
are positive) and specifically $x \cdot y \in C^-$.  Hence $(C^-,C^+)$
is multiplicative and we are done.
\end{proof}

\begin{claim}
\label{50}
Let $\langle a_t:t \in I\rangle$ be some indiscernible sequence, and
denote by $I^*$ the set $I$ ordered in reversed order.  So $\langle
a_t:t \in I\rangle,\langle a_t:t \in I^* \rangle$ are immediate nb's.
\end{claim}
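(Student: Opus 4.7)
The plan is to observe that the claim follows almost immediately from Definition \ref{47}(1) by taking the original sequence itself as the common ambient indiscernible sequence. Specifically, I will set $J := I$ and $\langle b_s : s \in J\rangle := \langle a_s : s \in I\rangle$, which is indiscernible by hypothesis. All that remains is to produce the two injections required by the definition.

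For the first embedding $\sigma^1 : I \to J$, I take the identity map on $I$, which is trivially order preserving, and then $a^1_t = a_t = b_{\sigma^1(t)}$ as required. For the second embedding $\sigma^2 : I^* \to J$, I again take the set-theoretic identity on the underlying set, now viewed as a map from $I^*$ to $J = I$. Since $I^*$ is $I$ with its order reversed while $J$ carries the original order, this identity map is anti-order preserving from $I^*$ to $J$, and we have $a^2_t = a_t = b_{\sigma^2(t)}$.

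Both maps are injections (they are literally the identity on the underlying set), both satisfy the required equations $a^\ell_t = b_{\sigma^\ell(t)}$, and the definition of immediate neighbors explicitly allows one embedding to be order preserving while the other is anti-order preserving. Hence the two sequences $\langle a_t : t \in I\rangle$ and $\langle a_t : t \in I^*\rangle$ embed into the common indiscernible sequence $\langle b_s : s \in J\rangle$, witnessing that they are immediate neighbors. There is essentially no obstacle here; the content of the claim is simply that the definition of inb was set up to tolerate the reversal of order, which it explicitly does by permitting anti-order-preserving injections.
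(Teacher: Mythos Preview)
Your proof is correct and is exactly the approach taken in the paper, which simply says that the identity function from $I$ to itself is enough. You have merely spelled out in detail what the paper leaves implicit: that the identity serves as an order-preserving injection for the original sequence and as an anti-order-preserving injection for the reversed one.
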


\begin{proof}
The identity function from $I$ to itself is enough - just look at the
definitions. 
\end{proof}

\begin{notation}
\label{51}
Let $\bar{\bold a}$ be some infinite indiscernible sequence.  We
denote by $\tp'(\bar{\bold a})$ the type of some infinite countable
subsequence of $\bar{\bold a}$, we call $\tp'(\bar{\bold a})$ the
local type of $\bar{\bold a}$.
\end{notation}

\begin{claim}
\label{52}
Let $\bar{\bold a}^1,\bar{\bold a}^2$ be some $\ell$-nb's
indiscernible sequences for some $\ell \in \bbN$ then $\tp'(\bar{\bold
  a}^1) = \tp'(\bar{\bold a}^2)$ or $\tp'(\bar{\bold a}^1) =
\tp'(\bar{\bold a}^{2,*})$.  In other words, the local type is
preserved or reversed between neighbors.
\end{claim}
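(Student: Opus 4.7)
The plan is to proceed by induction on $\ell \in \bbN$. The key observation is that the local type $\tp'$ depends only on the type of an increasing $\omega$-indexed subsequence, and that an order-preserving or anti-order-preserving injection between linear orders sends increasing countable subsequences to increasing (respectively decreasing) countable subsequences.

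For the base case $\ell = 1$, suppose $\bar{\bold a}^1, \bar{\bold a}^2$ are inb's, witnessed by an indiscernible sequence $\bar{\bold b} = \langle b_s : s \in J \rangle$ and injections $\sigma^1, \sigma^2$ with $a^\ell_t = b_{\sigma^\ell(t)}$. I would take any increasing $\omega$-sequence $t_0 <_{I^\ell} t_1 <_{I^\ell} \ldots$ in $I^\ell$ and push it forward via $\sigma^\ell$. If $\sigma^\ell$ is order-preserving, the image is an increasing $\omega$-subsequence of $\bar{\bold b}$, so $\tp'(\bar{\bold a}^\ell) = \tp'(\bar{\bold b})$. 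If $\sigma^\ell$ is anti-order-preserving, the image is a decreasing $\omega$-subsequence of $\bar{\bold b}$, so $\tp'(\bar{\bold a}^\ell) = \tp'(\bar{\bold b}^*)$. In every combination of cases for $\sigma^1$ and $\sigma^2$, one concludes $\tp'(\bar{\bold a}^1) = \tp'(\bar{\bold a}^2)$ or $\tp'(\bar{\bold a}^1) = \tp'(\bar{\bold a}^{2,*})$, since reversing twice restores the original local type.

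For the inductive step, suppose the statement holds for $\ell$ and that $\bar{\bold a}^1, \bar{\bold a}^2$ are $(\ell+1)$-nb's. By Definition \ref{47}(2) there is an intermediate indiscernible sequence $\bar{\bold c}$ such that $\bar{\bold a}^1, \bar{\bold c}$ are $\ell$-nb's and $\bar{\bold c}, \bar{\bold a}^2$ are inb's. By the inductive hypothesis $\tp'(\bar{\bold a}^1) \in \{\tp'(\bar{\bold c}), \tp'(\bar{\bold c}^*)\}$, and by the base case $\tp'(\bar{\bold c}) \in \{\tp'(\bar{\bold a}^2), \tp'(\bar{\bold a}^{2,*})\}$ (equivalently, $\tp'(\bar{\bold c}^*) \in \{\tp'(\bar{\bold a}^{2,*}), \tp'(\bar{\bold a}^2)\}$). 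Combining these two facts, and using that $(\bar{\bold a}^{2,*})^* = \bar{\bold a}^2$, gives $\tp'(\bar{\bold a}^1) \in \{\tp'(\bar{\bold a}^2), \tp'(\bar{\bold a}^{2,*})\}$, completing the induction.

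The only delicate point, which would otherwise be the main obstacle, is keeping track of the $\{*, \text{id}\}$ bookkeeping: we must verify that passing through a chain of order-preserving/anti-order-preserving maps gives a composition that is again either order-preserving or anti-order-preserving (an immediate parity argument in the group $\bbZ/2\bbZ$), and that $\tp'$ is genuinely invariant under passage to subsequences of the same order type. The latter follows because $\bar{\bold b}$ is indiscernible, so every increasing (respectively decreasing) countable subsequence of $\bar{\bold b}$ realizes the same type.
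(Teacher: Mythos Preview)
Your proof is correct and follows essentially the same route as the paper's: induction on $\ell$ reducing to the immediate-neighbor case, where both sequences sit inside a common indiscernible $\bar{\bold b}$ and hence share its local type (or that of its reverse). The paper's version is terser and leaves the anti-order-preserving/parity bookkeeping implicit, whereas you spell it out carefully.
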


\begin{proof}
Using induction on $\ell$ we are left with proving that the claim is
correct in the case of immediate neighbors.  This case is also easy
since if $\bar{\bold a}^i$ is embedded in $\bar{\bold b}$, then surely
$\tp'(\bar{\bold b}) = \tp'(\bar{\bold a}^i)$.    
\end{proof}

\noindent
We next turn our attention to a different view of perpendicularity;
one which eliminates, in some sense, the dependency of the definition
of perpendicularity in the particular choice of the sequences
themselves.
\begin{definition}
\label{53}
Let $\bar{\bold a}^1,\bar{\bold a}^2$ be some endless indiscernible
sequences in some real closed field $\cF$.  We say that $\bar{\bold
  a}^1,\bar{\bold a}^2$ are strongly perpendicular iff $\bar{\bold
  c}^1,\bar{\bold c}^2$ are perpendicular whenever $\bar{\bold
  a}^i,\bar{\bold c}^i$ are nb's for $i \in \{1,2\}$. 
\end{definition}

\begin{example}
\label{54}
Consider the following model $M:|M| = \{(x,y):x,y \in \bbQ\},<_1$ is a
binary predicate agreeing in every point with the natural order of
$\bbQ$ according to the first element of the tuple, and $<_2$ is a
binary predicate agreeing in every point with the natural order of
$\bbQ$ according to the second element of the tuple.

(The reader should check that this model is indeed a model of a
dependent theory).  Indiscernible sequences $\langle (a_t,b_t):t \in
I\rangle$ in this model divide into 8 kinds, according to the
direction in each dimension (their local types):
\mn
\begin{enumerate}
\item  $\forall t <_I s(a_t = a_s,b_t <_{\bbQ} b_s)$
\sn
\item  $\forall t <_I s(a_t = a_s,b_t >_{\bbQ} b_s)$
\sn
\item  $\forall t <_I s(a_t >_{\bbQ} a_s,b_t = b_s)$
\sn
\item  $\forall t <_I s(a_t <_{\bbQ} a_s,b_t = b_s)$
\sn
\item  $\forall t <_I s(a_t <_{\bbQ} a_s,b_t <_{\bbQ} b_s)$
\sn
\item  $\forall t <_I s(a_t <_{\bbQ} a_s,b_t >_{\bbQ} b_s)$
\sn
\item  $\forall t <_I s(a_t >_{\bbQ} a_s,b_t <_{\bbQ} b_s)$
\sn
\item  $\forall t <_I s(a_t >_{\bbQ} a_s,b_t >_{\bbQ} b_s)$.
\end{enumerate}
\mn
In this example we will show that two indiscernible sequences of the
5th kind are always 2-nb's, that indiscernible sequences of the 1st
kind are strongly perpendicular to indiscernible sequences of the 3rd
kind and not strongly perpendicular to sequences of the 5th kind.

Start with two 5th kind indiscernible sequences: $\bar{\bold h}^1 =
\langle h^1_n:n \in \bbN\rangle,\bar{\bold h}^2 = \langle h^2_n:n \in
\bbN\rangle$.  We construct a third sequence $\bar{\bold h} = \langle
h_n:n \in \bbN\rangle$ such that $\bar{\bold h}^1,\bar{\bold
  h},\bar{\bold h}^2,\bar{\bold h}$ are both indiscernible.  This will
show that indeed the sequences are 2-nb's.  The construction is by
induction on $n \in \bbN$.  Assume $h_k$ has been chosen for all
$k<n$.  Choose $h_n$ such that $h_n >_j h^i_m$ for all $j \in
\{1,2\},i \in \{1,2\},m \in \bbN$ and $h_n >_j h_k$ for all $j \in
\{1,2\},k<n$.  The reader should now check that indeed what was
expected of $\bar{\bold h}$ occurs.

Now we prove that indiscernible sequences of the 1st kind are always
strongly perpendicular to indiscernible sequences of the 3rd kind.  We
notice that since local types are preserved between neighbors and the
division into kinds was based on the local type, it is enough to prove
that every sequence of the 1st kind is perpendicular to every sequence
of the 3rd kind.  We use the fact that $\{x >_1 y,x >_2 y\}$ is an
elimination set for this theory and only use this set when proving
perpendicularity.  So let $\bar{\bold a}^1 = \langle (x^1_t,y^1_t):t
\in I^1\rangle,\bar{\bold a}^2 = \langle (x^2_s,y^2_s:s \in
I^2)\rangle$ be two indiscernible sequences of the 1st and 3rd kind
respectively.  So $x^1_t$ are constant and equal to some $x^1_0$ and
$y^2_s$ are also constants and equal to some $y^2_0$.

Now let $\varphi_i(x,y) = x >_i y$ for $i \in \{1,2\}$.  Assume that
for every large enough $t \in I^1$ we have that for every large enough
$s \in I^2$ the following holds: $(x^1_t,y^1_t) >_1 (x^2_s,y^2_s)$.
So for every large enough $t \in I^1$ for every $s \in I^2$ we have
that $x^1_0 >_{\bbQ} x^2_s$ in particular for every large enough $s
\in I^2$ for every large enough $t \in I^1:(x^1_t,y^1_t) >_1
(x^2_s,y^2_s)$.  The opposite case as well as the cases involving
$>_2$ are handled the same way, and since this is an elimination set,
we are done.

Here we shall prove that sequences of the 1st kind are not strongly
perpendicular to sequences of the 5th kind.  Let $\bar{\bold a}^1 =
\langle (x^1_0,y^1_t):t \in I\rangle$ be a sequence of the 1st kind.
By definition in order to prove our claim it is enough to show that
$\bar{\bold a}^1$ is not perpendicular to some sequence of the 5th
kind which is a neighbor of a given sequence.  However, we showed
earlier in this example that any two sequences of the 5th kind are
nb's, so it is enough to show that $\bar{\bold a}^1$ is not
perpendicular to some sequence $\bar{\bold a}^2$ of the 5th kind.  So
let $\langle x^2_t:t \in I\rangle$ be some increasing sequence in
$\bbQ$.  We choose $\bar{\bold a}^2 = \langle (x^2_t,y^1_t):t \in
I\rangle$.  The formula $\varphi(x,y) = x >_2 y$ now witness that
$\bar{\bold a}^1,\bar{\bold a}^2$ are not perpendicular and we are done.
\end{example}

\begin{definition}
\label{55}
Let $I$ be some well-ordered set and $k \in \omega$.  We say that $I'
\subseteq I$ is $k$-spaced in $I$ if $I'$ is well-ordered and $t +_I k
<_I t'$ whenever $t < t'$ are both in $I'$.
\end{definition}

\begin{example}
\label{56}
$5-\bbZ$ is 4-spaced in $\bbZ$.
\end{example}

\begin{claim}
\label{57}
Let $\bar{\bold a} = \langle a_t:t \in I\rangle$ ($I$ well ordered) be
some endless indiscernible sequence over $A$ in some monster model
$\cC$ and let $\varphi(x_0,\dotsc,x_{n-1},y)$ be some formula with
parameters from $A$ which defines a function $f(\bar x) =
f_\varphi(\bar x)$ from $(\cup \bar{\bold a})^n$ to $\cC$.  So any
sequence of the form $\langle f(a_t,a_{t+1},\dotsc,a_{t+n-1}):t \in
I'\rangle$ where $I' \subseteq I$ is $n$-spaced is also indiscernible
over $A$.  (Put otherwise, if we are givenan $n$-ary definable
function $\varphi$ and a partition of $I$ into a sequence $J$ of
consecutive $n$-tuples then the image of $J$ under $\varphi$ remains
indiscernible.) 
\end{claim}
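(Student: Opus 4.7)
The plan is to unpack the definition of indiscernibility and observe that the $n$-spacing condition is exactly what lets us concatenate the index tuples. Fix any $k<\omega$ and any formula $\psi(z_0,\ldots,z_{k-1})$ with parameters from $A$. To show that $\langle f(a_t,a_{t+1},\ldots,a_{t+n-1}):t\in I'\rangle$ is indiscernible over $A$, it suffices to prove that for any two increasing sequences $t_0<_{I'}\cdots<_{I'}t_{k-1}$ and $t'_0<_{I'}\cdots<_{I'}t'_{k-1}$ in $I'$, the two $k$-tuples of $f$-values have the same truth value under $\psi$.

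The first step is to form the auxiliary formula
\[
\theta(\bar x_0,\ldots,\bar x_{k-1}) \;\equiv\; \exists z_0\cdots\exists z_{k-1}\left[\bigwedge_{i<k}\varphi(\bar x_i,z_i)\wedge\psi(z_0,\ldots,z_{k-1})\right],
\]
where each $\bar x_i$ is an $n$-tuple, and $\theta$ uses only parameters from $A$ (those of $\varphi$ and of $\psi$). Because $\varphi$ defines $f$ as a function on $(\bigcup\bar{\bold a})^n$, the existential witnesses $z_i$ are uniquely determined by the $\bar x_i$, so
\[
\cC\models\theta(a_{t_0},\ldots,a_{t_0+n-1},\ldots,a_{t_{k-1}},\ldots,a_{t_{k-1}+n-1})\ \Longleftrightarrow\ \cC\models\psi\bigl(f(a_{t_0},\ldots,a_{t_0+n-1}),\ldots,f(a_{t_{k-1}},\ldots,a_{t_{k-1}+n-1})\bigr),
\]
and likewise for the primed indices.

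The second step uses $n$-spacing: since $t_i+_I(n-1)<_I t_{i+1}$ whenever $t_i<_{I'}t_{i+1}$, the concatenated index list
\[
t_0<t_0+1<\cdots<t_0+n-1<t_1<\cdots<t_{k-1}+n-1
\]
is a strictly $<_I$-increasing $kn$-tuple in $I$, and the same holds for the primed list. By the assumption that $\bar{\bold a}$ is indiscernible over $A$, the two $kn$-tuples from $\bar{\bold a}$ realize the same type over $A$; applying this to the formula $\theta$ gives that $\theta$ holds on one concatenation iff it holds on the other, and hence $\psi$ holds on one tuple of $f$-values iff it holds on the other, as required.

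The argument is essentially bookkeeping: the only substantive point is recognizing that the $n$-spacing is precisely what one needs to guarantee that the flattened index list is still $<_I$-increasing, so that indiscernibility of $\bar{\bold a}$ applies. The functionality of $f$ (i.e.\ that $\varphi$ really defines a function on $(\bigcup\bar{\bold a})^n$) is needed only to pass cleanly between $\psi$ and the existentially quantified formula $\theta$. No case analysis or use of the dependence hypothesis on $T$ is needed, and the conclusion is indiscernibility over $A$ since all the parameters involved lie in $A$.
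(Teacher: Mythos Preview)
Your proof is correct and follows essentially the same approach as the paper's: both pull back the test formula through the definable function $f$ (you write this explicitly as the existential formula $\theta$, the paper writes it informally as $f(\varphi)=\varphi(f(\bar x_0),\ldots,f(\bar x_{k-1}))$) and then invoke indiscernibility of $\bar{\bold a}$ on the concatenated $kn$-tuple of indices. Your version is slightly more explicit about why the $n$-spacing forces the concatenated index list to be strictly $<_I$-increasing, but the underlying argument is the same.
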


\begin{proof}
Denote $\bar b_t = \langle a_t,\dotsc,a_{t+n-1}\rangle$ so
$f(\bar{\bold a}) = \langle f(\bar b_t):t \in I'\rangle$.  Assume
otherwise, so for some $\varphi(x_0,\dotsc,x_{n-1})$ with parameters
from $A$ we have that for some $t_0 < \ldots < t_{n-1}$ and $s_0 <
\ldots < s_{n-1}$ we have that

\[
\varphi[f(\bar b_{t_0}),\dotsc,f(\bar b_{t_{n-1}})] \Leftrightarrow
\neg \varphi[f(\bar b_{s_0}),\dotsc,f(\bar b_{s_{n-1}})].
\]

\mn
But the formula $f(\varphi)(x_0,\dotsc,x_{n-1}) =
\varphi(f(x_0),\dotsc,f(x_{n-1}))$ is a formula over $A$ so

\[
f(\varphi)[\bar b_{t_0},\dotsc,\bar b_{t_{n-1}}] \Leftrightarrow
f(\varphi)[\bar b_{s_0},\dotsc,\bar b_{s_{n-1}}].
\]

\mn
And we have a contradiction.
\end{proof}

\begin{notation}
\label{58}
We denote $f_{I'}(\bar{\bold a}) = \langle f(a_t,\dotsc,a_{t+n-1}):t
\in I'\rangle$.
\end{notation}

\begin{claim}
\label{59}
Let $\bar{\bold a} = \langle a_t:t \in I\rangle$ ($I$ is well ordered)
be some infinite indiscernible sequence over $A$ in some monster model
$\cC$ and let $\varphi(\bar x,y)$ be some formula with parameters from
$A$ which defines a non-constant function $f(\bar x) = f_\varphi(\bar
x)$ on $(\cup \bar{\bold a})^n$.

\noindent
1) Let $\bar{\bold c}$ be some indiscernible sequence which is a nb's
of some sequence $f_{I'}(\bar{\bold a})$ where $I' \subseteq I$ is
   $n$-spaced.  Then for some indiscernible sequence $\bar{\bold a}' =
   \langle a'_t:t \in J\rangle$, which is a nb's of $\bar{\bold a}$,
we have that $\bar{\bold c} = f_{J'}(\bar{\bold a}')$ for some
   unbounded $n$-spaced $J' \subseteq J$.

\noindent
2) Assume that $\bar{\bold a}$ is endless and for some indiscernible
   endless sequence $\bar{\bold c}$ we have that $\bar{\bold c}$ is
   not perpendicular to some $f_{I'}(\bar{\bold a})$ where $I'
   \subseteq I$ is $n$-spaced and unbounded.  Then $\bar{\bold a}$ is
   not perpendicular to $\bar{\bold c}$.
\end{claim}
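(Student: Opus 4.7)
\emph{Plan.} The two parts are quite different in flavor, so I would treat them separately, with Part (2) free to use Part (1) as a structural lemma if needed.

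\emph{Part (1).} I would induct on the length $\ell$ of the neighbor chain connecting $\bar{\bold c}$ to $\bar{\bold e} := f_{I'}(\bar{\bold a})$. The case $\ell = 0$ is trivial: take $\bar{\bold a}' = \bar{\bold a}$ and $J' = I'$. For $\ell = 1$, by Definition \ref{47} both $\bar{\bold c}$ and $\bar{\bold e}$ embed in a common indiscernible $\bar{\bold d} = \langle d_r : r \in J_0\rangle$ via order-preserving (or reversing) injections $\sigma_c, \sigma_e$. I would then construct, by compactness, a new indiscernible sequence $\bar{\bold a}^* = \langle a^*_q : q \in J\rangle$ together with an $n$-spaced unbounded $J_0^* \subseteq J$ subject to: (a) on the image of $\sigma_e$, the $n$-blocks of $\bar{\bold a}^*$ copy the corresponding $n$-blocks $(a_t, \ldots, a_{t+n-1})$ of $\bar{\bold a}$; (b) for every $r \in J_0$, $d_r = f$ applied to the $r$-th block of $\bar{\bold a}^*$ in $J_0^*$; (c) $\bar{\bold a}^*$ is indiscernible over $A$. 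Consistency of the corresponding type at each finite level follows from indiscernibility of $\bar{\bold d}$ over $A$ and Claim \ref{57} (new $n$-blocks may be supplied by ``fresh'' indiscernible copies of $\bar{\bold a}$ whose $f$-values realize the required type). The ambient indiscernible sequence witnessing that $\bar{\bold a}, \bar{\bold a}^*$ are immediate neighbors is then obtained by threading the original $\bar{\bold a}$ with the new $n$-blocks. Restricting by $\sigma_c$ inside $\bar{\bold d} = f_{J_0^*}(\bar{\bold a}^*)$ recovers $\bar{\bold c}$ as $f_{J'}(\bar{\bold a}')$. The induction step composes the construction along the chain.

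\emph{Part (2).} I would argue the contrapositive: assume $\bar{\bold a}$ and $\bar{\bold c}$ are perpendicular and show $\bar{\bold e}, \bar{\bold c}$ are perpendicular. Given a formula $\varphi(x, y, \bar d)$, set $\psi(x_0, \ldots, x_{n-1}, y, \bar d) := \varphi(f(x_0, \ldots, x_{n-1}), y, \bar d)$. The heart of the argument is an auxiliary multi-variable perpendicularity lemma: if $\bar{\bold a}, \bar{\bold c}$ are perpendicular, then for every $\chi(\bar x, y, \bar d)$ with $\ell g(\bar x) = m$, there is a single truth value $\bold t$ such that for every large strictly increasing $m$-tuple from $\bar{\bold a}$, for every large $s \in I_{\bar c}$, $\chi^{\bold t}$ holds, \emph{and} the same with the orders swapped. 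This is proved by induction on $m$, the case $m=1$ being Claim \ref{21}(4). For the inductive step one fixes the last coordinate $x_{m-1} = \bar a_{t_{m-1}}$ as a parameter and applies the inductive hypothesis, then uses the uniform dependency bound of Claim \ref{5} to conclude that the truth value does not depend on the chosen $t_{m-1}$. Applying the lemma to $\psi$ with $m = n$ and restricting to the consecutive tuples $(t, t+1, \ldots, t+n-1)$ for $t \in I'$ (these are a subset of all large increasing tuples and by indiscernibility of $\bar{\bold a}$ the truth value on them agrees with the general one), one recovers a common truth value for $\varphi$ on $\bar{\bold e} \times \bar{\bold c}$ in both orders, which by Claim \ref{21}(4) is perpendicularity of $\bar{\bold e}$ and $\bar{\bold c}$.

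\emph{Main obstacle.} For Part (1) the delicate point is ensuring the compactness-built $\bar{\bold a}^*$ is honestly a \emph{neighbor} of $\bar{\bold a}$ in the precise sense of Definition \ref{47}, i.e., producing an explicit ambient indiscernible sequence into which both embed; this requires careful bookkeeping of how ``old'' $\bar{\bold a}$-blocks and ``new'' blocks interleave while preserving indiscernibility over $A$. For Part (2) the delicate point is the auxiliary multi-variable lemma, where one must verify that iterated ``for every large'' quantifiers on the $\bar{\bold a}$-side genuinely commute with the single ``for every large'' on the $\bar{\bold c}$-side; the uniform bound from Claim \ref{5} on the number of sign changes of any formula along an indiscernible sequence is exactly what makes these commutations go through.
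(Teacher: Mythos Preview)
For Part~(1) your plan coincides with the paper's: induct on the length of the neighbor chain, reduce to immediate neighbors, and use compactness to build an indiscernible $\bar{\bold a}_*\supseteq\bar{\bold a}$ with $f_{I'_*}(\bar{\bold a}_*)=\bar{\bold b}$, then pull $\bar{\bold c}$ back through $\sigma_c$. The paper says exactly this in two sentences; you have correctly isolated the only nontrivial point, namely that the compactness-built $\bar{\bold a}_*$ must genuinely contain $\bar{\bold a}$ so that the neighbor relation is witnessed.

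For Part~(2) the paper takes a different, much shorter route. It argues directly: if $\psi(x,y)$ witnesses that $\bar{\bold c}$ and $f_{I'}(\bar{\bold a})$ are not perpendicular, then $\psi_f(x,\bar y):=\psi(x,f(\bar y))$ witnesses that $\bar{\bold c}$ and $\bar{\bold a}$ are not perpendicular ``in the same manner'' --- one line, no contrapositive, no auxiliary lemma. Your contrapositive via a multi-variable perpendicularity lemma is a genuinely different decomposition. What it buys you is rigor at exactly the point the paper hand-waves: $\psi_f$ has $n$ free variables on the $\bar{\bold a}$-side, while Claim~\ref{21}(4) is literally stated for single elements $\bar a^1_t$, so one still owes an explanation of why a formula in an $n$-tuple from $\bar{\bold a}$ detects non-perpendicularity of the singleton sequence $\bar{\bold a}$; your lemma is precisely that explanation. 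What the paper's approach buys is economy: the composition $\psi\mapsto\psi_f$ already encodes the entire idea, and most readers will accept without comment that grouping $\bar{\bold a}$ into consecutive $n$-blocks does not change perpendicularity. One small caution on your induction: when you fix the last coordinate $a_{t_{m-1}}$, the remaining $(m-1)$-tuple must lie below $t_{m-1}$, so you need $t_{m-1}$ chosen above the threshold supplied by the inductive hypothesis; the eventual constancy of the truth value in $t_{m-1}$ then follows by applying the base case (Claim~\ref{21}(4)) with one fixed large $(m-1)$-tuple absorbed into the parameters.
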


\begin{proof}
1) It is enough to prove the claim for the case when $\bar{\bold c}$
and $f_{I'(\bar{\bold a})}$ are inb's.  The general case is easily
   concluded by induction on the length of the sequence of i-nb's
   between $\bar{\bold c}$ and $f(\bar{\bold a})$.  So let $\bar{\bold
   b}$ be an indiscernible sequence such that $\sigma$ is an
   order-preserving injection of $\bar{\bold c}$ into $\bar{\bold b}$
   and $\tau$ is an order-preserving injection of $f_{I'}(\bar{\bold
   a})$ in $\bar{\bold b}$ (the case where one or both injections are
   anti-order-preserving is proven in the same manner).  We will find
   an indiscernible sequence $\bar{\bold a}_*$ with indices set $I_*$
   such that $\bar{\bold a} \subseteq \bar{\bold a}_*$ and
   $f_{I'}(\bar{\bold a}_*) = \bar{\bold b}$ for some unbounded
   $n$-spaced $I'_* \subseteq I_*$.  This is easily done using
   compactness and the fact that $\bar{\bold a},\bar{\bold b}$ are
   indiscernible and infinite.  Now all we have to do is take the
   subsequence of $\bar{\bold a}_*$ corresponding to the image of
   $\bar{\bold c}$ in $\bar{\bold b}$ under $\tau$. 

\noindent
2) $\bar{\bold c} = \langle c_s:s \in J\rangle$ is not perpendicular
   to $f_{I'}(\bar{\bold a}) = \langle f(a_t,\dotsc,a_{t+n-1}):t \in
   I'\rangle$, hence some $\psi(x,y)$ witness it, i.e. for every large
   enough $t \in I'$ for every large enough $s \in J$ we have that
   $\models \psi[c_s,f(a_t,\dotsc,a_{t+n-1})]$ and for every large
enough $s \in J$ for every large enough $t \in I'$ we have that
   $\models \neg \psi[c_s,f(a_t,\dotsc,a_{t+n-1})]$.  Now the formula
   $\psi_f(x,\bar y) = \psi(x,f(\bar y))$ witness the fact that
   $\bar{\bold c},\bar{\bold a}$ are not perpendicular in the same manner.
\end{proof}

\noindent
Recall the convention on $*$ in Claim \ref{50}.
\begin{claim}
\label{60}
Let $\bar{\bold a} = \langle a_t:t \in I\rangle$ be some positive
indiscernible sequence endless in both ways inducing an additive cut
in some real closed field $\cF$.  Then for some $\varphi(x,y)$ which
defines a function $f$ on $(\cup \bar{\bold a})^k$ we have that either
$f_{I'}(\bar{\bold a})$ or $f_{I'}(\bar{\bold a}^*)$ induces a
multiplicative cut in $\cF$ for some unbounded $k$-spaced $I'
\subseteq I$.
\end{claim}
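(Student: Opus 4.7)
The plan is to take the definable function $f(x, y) := y/(y-x)$ (or a closely related rational expression in two variables) of arity $k = 2$ and to form $\bar{\bold b} := f_{I'}(\bar{\bold a}) = \langle a_{t+1}/(a_{t+1} - a_t) : t \in I' \rangle$ for an unbounded $2$-spaced $I' \subseteq I$; this is indiscernible by Claim~\ref{57}. The goal is to show that either $\bar{\bold b}$ or the same construction on $\bar{\bold a}^*$ induces a multiplicative cut in $\cF$.

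Introduce the ratio $r_t := a_{t+1}/a_t > 1$; by Claim~\ref{57} the sequence $\langle r_t \rangle$ is itself indiscernible and so either constant or strictly monotonic in the order of $\cF$. If $r_t$ is bounded below by $1 + c$ for some positive $c \in \bbQ$, then iterating yields $a_{t+n} \geq (1+c)^n a_t$ for every standard $n$; combined with the endlessness of $I$ this shows that the cut of $\bar{\bold a}$ is already closed under multiplication and contains $2$, i.e., is already multiplicative, and one takes $f = \mathrm{id}$. Otherwise $r_t - 1$ is infinitesimal in $\cF$ --- smaller than every positive standard rational --- which is the essential case, forced precisely by the additive-but-not-multiplicative hypothesis.

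In this essential case $b_t = r_t/(r_t - 1)$ is super-archimedean, so $2 \in C^-_{\bar{\bold b}}$. For closure of $C^-_{\bar{\bold b}}$ under multiplication I apply Claim~\ref{49}: it suffices to verify $b_{t_k} > P(b_{t_0}, \dots, b_{t_{k-1}})$ for every parameter-free polynomial $P$ and every increasing tuple $t_0 < \cdots < t_k$ in $I'$. Substituting $b_{t_i} = r_{t_i}/(r_{t_i} - 1)$ and clearing denominators reduces this to a polynomial inequality in the $(r_{t_i} - 1)$'s, and the required inequality follows from the monotone decay of $r_t - 1$ toward $0$ along $I$ --- itself a consequence of the additive closure of $C^-$ via the witnesses $n \cdot a_t \in C^-$ for all $n \in \bbN$. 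If $\langle r_t \rangle$ is increasing (so $\bar{\bold b}$ is decreasing), the same construction applied to $\bar{\bold a}^*$ reverses the monotonicity and produces the required increasing sequence.

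The principal obstacle is the quantitative conversion of $C^-$'s additive closure into sufficient decay of $r_t - 1$ to dominate every polynomial expression in the $(r_{t_i} - 1)^{-1}$. I expect to handle this by induction on $\deg P$, using Claim~\ref{5}'s uniform finiteness bound on sign-changes of $\varphi(\bar x, \bar y)$-definable sets to reduce an a priori infinite verification to finitely many cases per polynomial, and then invoking additivity for each case.
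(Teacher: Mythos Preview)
Your dichotomy collapses. The additive hypothesis together with indiscernibility already forces $r_t = a_{t+1}/a_t > 2$: since $C^-$ is closed under addition, $2a_t \in C^-$, so $a_s > 2a_t$ for \emph{some} $s > t$; by indiscernibility the formula ``$2x < y$'' then holds for \emph{every} pair $t < s$. Hence your ``essential case'' ($r_t - 1$ infinitesimal) is vacuous, and you are always in Case~A. But the Case~A conclusion is wrong: from $a_{t+n} \ge (1+c)^n a_t$ for standard $n$ you only get closure of $C^-$ under multiplication by standard reals, which is just additivity again. You do \emph{not} get $a_t^2 \in C^-$ when $a_t$ is infinite, so the cut need not be multiplicative (and in the interesting instances of the claim it is not). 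Finally, your function $f(x,y)=y/(y-x)$ cannot work: since $r_t > 2$ we have $b_t = r_t/(r_t-1) = 1 + 1/(r_t - 1) < 2$, so $2 \in C^+_{\bar{\bold b}}$ and the cut induced by $\bar{\bold b}$ is never multiplicative.

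The paper's argument is both simpler and uses the correct function. Take $f(x,y)=y/x$, so $b_t = a_{t+1}/a_t$. Additivity gives $b_t > 2$ as above. By indiscernibility the four-term comparison $\frac{a_{t_1}}{a_{t_0}}$ versus $\frac{a_{t_3}}{a_{t_2}}$ (for $t_0 < t_1 < t_2 < t_3$) is uniformly $<$, $>$, or $=$; equality forces the sequence constant, so assume $<$ (the other case is handled on $\bar{\bold a}^*$). For closure under multiplication, given $t < t+1 < s < s+1$ in $I$ one has
\[
b_t \cdot b_s \;=\; \frac{a_{t+1}}{a_t}\cdot\frac{a_{s+1}}{a_s} \;<\; \frac{a_{s+1}}{a_t}
\]
(since $a_s/a_{t+1} > 1$), and then the case hypothesis with indices $t < s+1 < \ell < \ell+1$ gives $\frac{a_{s+1}}{a_t} < \frac{a_{\ell+1}}{a_\ell} = b_\ell$. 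No appeal to Claim~\ref{5} or an induction on $\deg P$ is needed.
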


\begin{proof}
Assume for a contradiction that for every $t_3 >_I t_2 >_I t_1 >_I t_0$
we have that:

\[
\frac{a_{t_1}}{a_{t_0}} = \frac{a_{t_3}}{a_{t_2}}
\]

\mn
Then by indiscernibility we have that the sequence is constant,
contradiction.  Without loss of generality we can assume that
$\bar{\bold a}$ is increasing, otherwise use $\bar{\bold a}^*$
instead.  At first, we assume the following:

\[
\frac{a_{t_1}}{a_{t_0}} < \frac{a_{t_3}}{a_{t_2}} \text{ whenever }
t_3 >_I t_2 >_I t_1 >_I t_0
\]

\mn
Now let $I'$ be 2-spaced in $I$ and consider $f(x,y) = \frac yx$.  We
want to prove that $f_{I'}(\bar{\bold a})$ induce a multiplicative
cut, i.e. that it is closed under multiplication and contains 2.  
So let $\frac{a_{t+1}}{a_t},\frac{a_{s+1}}{a_s}$ be two elements
of $f_{I'}(\bar{\bold a})$ with $s >_I t$.  (The indices $s+1,t+1$ are
not well-defined and we use them here for convenience. Formally we
mean that the index in the proof is in the interval $(s,s+1)$ or
$(t,t+1))$.  By equivalence (1) and the fact that $\bar{\bold a}$ is
indiscernible we have that for some $\ell >_I s$ we have that
$\frac{a_{\ell+1}}{a_\ell} > \frac{a_{s+1}}{a_\ell} >
\frac{a_{s+1} a_{t+1}}{a_s a_t}$.  Now since $\bar{\bold a}$ is
additive and strictly increasing
we have that $\frac{a_{t+1}}{a_t} > 2$ always.  If the reverse
inequality in (1) holds, we consider $f(x,y) = \frac xy$ instead and in
$\bar{\bold a}^*$.  Let $I'$ be 2-spaced in $I^*$.  Let
$\frac{a_{t+1}}{a_t},\frac{a_{s+1}}{a_s}$ be two elements in
$f_{I'}(\bar{\bold a})$ with $s >_I t$ (remember that $s <_{I'} t$).
By the inverse of (1) there exists $\ell >_{I^*} t$ such that
$\frac{a_{\ell+1}}{a_\ell} > \frac{a_{s+1}}{a_t} > 
\frac{a_{s+1} a_{t+1}}{a_s a_t}$.  Again $\bar{\bold a}$ induces an
additive cut and is indiscernible, so $\frac{a_{t+1}}{a_t} > 2$ and we
are done.
\end{proof}

\begin{claim}
\label{61}
Let $\bar{\bold a} = \langle a_t:t \in I\rangle$ be some positive
indiscernible sequence endless in both ways in some real closed field
$\cF$.  So for some $\varphi(x,y)$ which defines a function $f$ on
$(\cup\bar{\bold a})^k$ we have that either $f_{I'}(\bar{\bold a})$ or
$f_{I'}(\bar{\bold a}^*)$ induce an additive cut in $\cF$ for some
unbounded $k$-spaced $I' \subseteq I$.
\end{claim}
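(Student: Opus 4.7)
The plan is to mirror the proof of Claim~\ref{60}, with multiplication replaced by addition throughout. By Claim~\ref{50}, we may assume $\bar{\bold a}$ is strictly increasing (otherwise pass to $\bar{\bold a}^*$). By indiscernibility of 4-tuples, the truth value of $a_{t_1} - a_{t_0} = a_{t_3} - a_{t_2}$ for $t_0 <_I t_1 <_I t_2 <_I t_3$ is constant. If this equality were always true, applying it to two 4-tuples sharing their first three indices would force two distinct $a_t$'s to coincide, contradicting strict monotonicity. Hence exactly one strict inequality holds uniformly; replacing $\bar{\bold a}$ with $\bar{\bold a}^*$ once more if needed (under reversal, ``differences decreasing'' becomes ``differences increasing''), we may assume $a_{t_1} - a_{t_0} < a_{t_3} - a_{t_2}$ always.

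Next I would take $I' \subseteq I$ unbounded and 2-spaced, let $\varphi(x,y,z)$ be $z = y - x$ defining the binary function $f(x,y) = y - x$, and set $\bar{\bold d} = f_{I'}(\bar{\bold a}) = \langle a_{t+1} - a_t : t \in I' \rangle$. By Claim~\ref{57}, $\bar{\bold d}$ is an indiscernible, strictly increasing sequence of positive elements of $\cF$. The task reduces to verifying that the cut it induces in $\cF$ is additive: positivity is immediate, and closure of the downward closure under addition amounts to showing that for given $d_t, d_s \in \bar{\bold d}$ there exists $d_\ell \in \bar{\bold d}$ with $d_\ell > d_t + d_s$. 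I would apply indiscernibility to 6-tuples $t_0 <_I \cdots <_I t_5$: the truth value of $(a_{t_5} - a_{t_4}) > (a_{t_1} - a_{t_0}) + (a_{t_3} - a_{t_2})$ is fixed, the equality case is ruled out by varying the last index (as in the first paragraph), and the ``always $>$'' case delivers exactly the additivity we need.

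The main obstacle is the remaining ``always $<$'' alternative, in which $\bar{\bold d}$ would be bounded in $\cF$. This is parallel to the crux of Claim~\ref{60}, and I expect it to be handled by leveraging the strictly-increasing-differences hypothesis against the supposed boundedness of $\bar{\bold d}$: a telescoping argument should either produce an outright contradiction (forcing us into the favorable ``always $>$'' branch), or else show that $\bar{\bold a}$ itself grows tamely enough that the original sequence $\bar{\bold a}$ directly induces the desired additive cut, in which case we take $f(x) = x$ with $k = 1$. Carefully disentangling and verifying this dichotomy is the heart of the argument.
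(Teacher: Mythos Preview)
Your approach is the same as the paper's (which simply says ``do as in Claim~\ref{60}''), and your setup is fine. The place where you hesitate, however, is not an obstacle at all. Once you have arranged that
\[
a_{t_1}-a_{t_0} < a_{t_3}-a_{t_2}\quad\text{whenever } t_0<_I t_1<_I t_2<_I t_3,
\]
the ``always $<$'' branch of your 6-tuple trichotomy is impossible, and the telescoping argument you allude to proves this outright rather than opening a dichotomy. Namely, for $t_0<_I\cdots<_I t_5$ apply the displayed inequality with indices $(t_0,t_3,t_4,t_5)$ to get
\[
a_{t_5}-a_{t_4} \;>\; a_{t_3}-a_{t_0}
\;=\;(a_{t_3}-a_{t_2})+(a_{t_2}-a_{t_1})+(a_{t_1}-a_{t_0})
\;>\;(a_{t_3}-a_{t_2})+(a_{t_1}-a_{t_0}),
\]
since $a_{t_2}-a_{t_1}>0$. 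This is exactly the closure of the downward closure of $\bar{\bold d}$ under addition; positivity is immediate because each $a_{t+1}-a_t>0$. So there is nothing left to ``disentangle'': the proof is complete at this point, and your suggested fallback (that $\bar{\bold a}$ itself might induce the additive cut) is never needed.

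One further remark: in Claim~\ref{60} the hypothesis that $\bar{\bold a}$ already induces an \emph{additive} cut is used only to guarantee $2\in C^-$ for the multiplicative cut (via $\frac{a_{t+1}}{a_t}>2$). The definition of additive cut has no analogous lower-bound requirement beyond positivity, so in Claim~\ref{61} no such extra hypothesis is present and no analogue of that step is required.
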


\begin{proof}
The proof is very similar to the proof of Claim \ref{60}.  The rest of
the proof is the same as the proof there.
\end{proof}

\noindent
\relax From the last two claims we can conclude:
\begin{conclusion}
\label{62}
Let $\bar{\bold a} = \langle a_t:t \in I\rangle$ be some positive
indiscernible sequence endless in both ways.  So for some
$\varphi(x,y)$ which defines a fucntion $f$ on $(\cup \bar{\bold
  a})^k$ we have that either $f_{I'}(\bar{\bold a})$ or
$f_{I'}(\bar{\bold a}^*)$ induce a multiplicative cut in $\cF$ for
some unbounded $k$-spaced $I' \subseteq I$.
\end{conclusion}

\noindent
We now turn to the main theorem in this section.
\begin{theorem}
\label{63}
Let $\cF$ be a real closed field.  Then no two indiscernible sequences
in $\cF$ are strongly perpendicular.
\end{theorem}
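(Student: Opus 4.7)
The plan is to derive a contradiction by producing, under the assumption of strong perpendicularity, a pair of neighbors of $\bar{\bold a}^1$ and $\bar{\bold a}^2$ that fails to be perpendicular.

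First, via nb-preserving transformations I would reduce to the case where each $\bar{\bold a}^i$ consists of positive elements and is indexed by an order endless in both directions. Reversal is an immediate neighbor operation (Claim \ref{50}); a sequence of negative elements can be replaced by its reverse--negated neighbor; and endlessness in both directions is obtained by embedding the given sequence as an unbounded subsequence of a longer indiscernible sequence in the monster model. Next, for each $i \in \{1,2\}$ I would apply Conclusion \ref{62} to produce an $\cF$-definable function $f^i$ of some arity $k_i$ and an unbounded $k_i$-spaced $I'_i \subseteq I^i$ such that
\[
\bar{\bold d}^i := f^i_{I'_i}(\bar{\bold a}^i)
\]
is indiscernible and induces a multiplicative cut in $\cF$. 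By Example \ref{48}, $\bar{\bold d}^1$ and $\bar{\bold d}^2$ are then $2$-nb's via a common intermediate indiscernible sequence $\bar{\bold b}$, which is an immediate neighbor of each $\bar{\bold d}^i$.

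Since $\bar{\bold b}$ is a nb of $\bar{\bold d}^i = f^i_{I'_i}(\bar{\bold a}^i)$, Claim \ref{59}(1) supplies a neighbor $\bar{\bold c}^i$ of $\bar{\bold a}^i$ and an unbounded $k_i$-spaced $J'_i$ with $\bar{\bold b} = f^i_{J'_i}(\bar{\bold c}^i)$. Observe that a nontrivial indiscernible sequence is never perpendicular to itself: for the formula $\varphi(x,y) = (x < y)$ applied to two copies of $\bar{\bold b}$, the two iterated truth values in Claim \ref{21}(4) disagree, since choosing the index $s$ strictly after the index $t$ yields $T$ while choosing $t$ strictly after $s$ yields $F$. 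Hence $\bar{\bold b}$ is not perpendicular to $\bar{\bold b}$, that is $\bar{\bold b} \not\perp f^1_{J'_1}(\bar{\bold c}^1)$; one application of Claim \ref{59}(2) gives $\bar{\bold c}^1 \not\perp \bar{\bold b} = f^2_{J'_2}(\bar{\bold c}^2)$, and a second application gives $\bar{\bold c}^1 \not\perp \bar{\bold c}^2$. Since each $\bar{\bold c}^i$ is a neighbor of $\bar{\bold a}^i$, the pair $(\bar{\bold c}^1, \bar{\bold c}^2)$ witnesses that $\bar{\bold a}^1, \bar{\bold a}^2$ are not strongly perpendicular, giving the desired contradiction.

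The principal technical hurdles I anticipate are in the bookkeeping for the initial reduction and in aligning the spacings and arities coming out of Conclusion \ref{62} and Example \ref{48} so that the same $\bar{\bold b}$ falls within the scope of Claim \ref{59}(1) on both sides; once these are in order, the core of the argument is a short two-step cascade of Claim \ref{59}(2), powered by the elementary fact that an indiscernible sequence is not perpendicular to itself.
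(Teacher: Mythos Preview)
Your proposal is correct and follows essentially the same route as the paper's proof: reduce to positive, both-ways-endless sequences; apply Conclusion~\ref{62} to push each $\bar{\bold a}^i$ to a sequence inducing a multiplicative cut; invoke Example~\ref{48} to obtain a common immediate neighbor $\bar{\bold b}$; pull back via Claim~\ref{59}(1) to neighbors $\bar{\bold c}^i$ of $\bar{\bold a}^i$ with $\bar{\bold b} = f^i_{J'_i}(\bar{\bold c}^i)$; and then cascade Claim~\ref{59}(2) twice from the non-perpendicularity of $\bar{\bold b}$ with itself. The paper is somewhat terser---it writes a single $f$ rather than $f^1,f^2$, and phrases the Example~\ref{48} step as producing neighbors $\bar{\bold b}^1,\bar{\bold b}^2$ that are ``not perpendicular'' without spelling out that these are both the common intermediate $\bar{\bold b}$---but your explicit treatment of these points (in particular the $x<y$ witness for non-self-perpendicularity) is exactly what is being used implicitly.
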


\begin{proof}
Let $\bar{\bold a}^1,\bar{\bold a}^2$ be two endless indiscernible
sequences in $\cF$.  We will show that $\bar{\bold a}^1,\bar{\bold
  a}^2$ are not strongly perpendicular.  By Claim \ref{59}, Clause 2
we can assume \wilog \, that both sequences are positive (otherwise
consider the function $f(x) = -x$).  We may also assume \wilog \, that
$\bar{\bold a}^1,\bar{\bold a}^2$ are endless in both ways and
increasing (since every sequence is nb's of any sequence extending it,
and of its reverse).  Now using Claim \ref{62} we apply a definable
function $f$ on the sequences such that the sequences $f(\bar{\bold
  a}^1),f(\bar{\bold a}^2)$ induce multiplicative cuts.  By the
example above (example \ref{48}) $f(\bar{\bold a}^1),f(\bar{\bold
  a}^2)$ has nb's $\bar{\bold b}^1,\bar{\bold b}^2$ respectively such
that $\bar{\bold b}^1,\bar{\bold b}^2$ are not perpendicular. By Claim
\ref{59}, clause 1 above we have that for some $\bar{\bold
  a}^1,\bar{\bold a}^2$ nb's of $\bar{\bold a}^1,\bar{\bold a}^2$
respectively we have that $\bar{\bold b}^i = f(\bar{\bold a}^{i'})$.
By Claim \ref{59}, clause 2 above we have that $\bar{\bold a}^{1'},
\bar{\bold b}^2$ are not perpendicular and then by the same claim
$\bar{\bold a}^{1'},\bar{\bold a}^{2'}$ are not perpendicular. This
completes the proof. 
\end{proof}
\bigskip

%\bibliographystyle{alphacolon}
%\bibliography{lista,listb,listx,listf,liste,listz}

\end{document}